\newtheorem{lemma1}{}[section]
\newenvironment{lemma}{\begin{lemma1}{\bf Lemma.}}{\end{lemma1}}
\newenvironment{example}{\begin{lemma1}{\bf Example.}\rm}{\end{lemma1}}
\newenvironment{theorem}{\begin{lemma1}{\bf Theorem.}}{\end{lemma1}}
\newenvironment{proposition}{\begin{lemma1}{\bf Proposition.}}{\end{lemma1}}
\newenvironment{corollary}{\begin{lemma1}{\bf Corollary.}}{\end{lemma1}}
\newenvironment{remark}{\begin{lemma1}{\bf Remark.}\rm}{\end{lemma1}}
\newenvironment{definition}{\begin{lemma1}{\bf Definition.}}{\end{lemma1}}
\newenvironment{construction}{\begin{lemma1}{\bf Construction.}}{\end{lemma1}}
\newenvironment{conjecture}{\begin {lemma1}{\bf Conjecture.}}{\end{lemma1}}
\newenvironment{assumption}{\begin{lemma1}{\bf Assumption.}\rm}{\end{lemma1}}
\newenvironment{remark*}{{\bf Remark.}}{}
\newenvironment{example*}{{\bf Example.}}{}
\newcommand{\R}{\ensuremath{\mathbb{R}}}
\newcommand{\Q}{\ensuremath{\mathbb{Q}}}
\newcommand{\Z}{\ensuremath{\mathbb{Z}}}
\newcommand{\C}{\ensuremath{\mathbb{C}}}
\newcommand{\N}{\ensuremath{\mathbb{N}}}
\newcommand{\PP}{\ensuremath{\mathbb{P}}}
\newcommand{\W}{\ensuremath{\mathbb{W}}}
\newcommand{\set}[1]{\left\{#1\right\}}
\newcommand{\merom}[3]{\ensuremath{#1:#2 \dashrightarrow #3}}
\newcommand{\holom}[3]{\ensuremath{#1:#2  \rightarrow #3}}
\newcommand{\fibre}[2]{\ensuremath{#1^{-1} (#2)}}
\newcommand\sL{{\mathcal L}}
\newcommand\sO{{\mathcal O}}
\newcommand\sX{{\mathcal X}}
\newcommand\bR{{\mathbb R}}
\newcommand\bZ{{\mathbb Z}}
\newcommand\bC{{\mathbb C}}
\newcommand\sW{{\mathcal W}}
\newcommand{\Gal}[0]{\operatorname{Gal}}
\newcommand{\To}{\longrightarrow}
\newcommand{\wti}{\widetilde}
\newcommand{\bk}{\mathbf{k}}
\newcommand{\bm}{\mathbf{m}}
\newcommand{\ga}{\alpha}
\newcommand{\gS}{\Sigma}
\newcommand{\gD}{\Delta}
\newcommand{\gk}{\kappa}
\newcommand{\go}{\omega}
\newcommand{\cH}{\mathcal{H}}
\newcommand{\cJ}{\mathcal{J}}
\newcommand{\cV}{\mathcal{V}}
\newcommand{\cQ}{\mathcal{Q}}
\newcommand{\cL}{\mathcal{L}}
\newcommand{\cX}{\mathcal{X}}
\newcommand{\cO}{\mathcal{O}}
\newcommand{\colonec}{\mathrel{:=}}
\newcommand{\bss}{\backslash}
\newcommand{\Id}{\mathrm{Id}}
\newcommand{\pr}{\mathrm{pr}}
\newcommand{\dto}{\dashrightarrow}
\newcommand{\hto}{\hookrightarrow}
\title{The fundamental group of compact K\"ahler threefolds}
\date{\today}
\author{Beno\^it Claudon}
\author{Andreas H\"oring}
\author{Hsueh-Yung Lin}
\address{Beno\^it Claudon, Institut \'Elie Cartan de Lorraine, Universit\'e de Lorraine, B.P. 70239, 54506 Vand{\oe}uvre-l\`es-Nancy Cedex, France}
\email{Benoit.Claudon@univ-lorraine.fr}
\address{Andreas H\"oring, Universit\'e C\^ote d'Azur, CNRS, LJAD, France}
\email{Andreas.Hoering@unice.fr}
\address{Hsueh-Yung Lin}
\email{hsuehyung.lin.math@gmail.com}
\begin{document}

\begin{abstract} 
Let $X$ be a compact K\"ahler manifold of dimension three. We prove that there exists a projective manifold $Y$ such that $\pi_1(X) \simeq \pi_1(Y)$. We also prove the bimeromorphic existence of algebraic approximations for compact K\"ahler manifolds
of algebraic dimension $\dim X-1$. Together with the work of Graf and the third author, this settles in particular the bimeromorphic Kodaira problem for
compact K\"ahler threefolds. 
\end{abstract}

\maketitle

\section{Introduction}

\subsection{Main result}

Compact K\"ahler manifolds arise naturally as generalisations of complex projective manifolds,
and Kodaira's problem asked if every compact K\"ahler manifold is deformation equivalent 
to a projective manifold. A positive answer to this problem trivially implies that the larger
class of K\"ahler manifolds realises the same topological invariants. The classification of analytic surfaces \cite{Kod} implies a positive answer to Kodaira's problem in this case (cf. also \cite{Buc08} for a different approach). However, Voisin's counterexamples \cite{Voi04, Voi07} show that there exist compact K\"ahler manifolds
of dimension at least four that do not deform to projective ones. Nevertheless it is interesting
to study Kodaira's problem at the level of some specific topological invariants, like the fundamental group:

\begin{conjecture} \label{conjecturegroups}
Let $X$ be a compact K\"ahler manifold.  Then the fundamental group $\pi_1(X)$ is projective, i.e. there exists a projective manifold $M$ such that $\pi_1(X) \simeq \pi_1(M)$.
\end{conjecture}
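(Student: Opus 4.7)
My plan is to prove the conjecture in dimension three (the case announced in the abstract) by performing a case analysis on the algebraic dimension $a(X)\in\{0,1,2,3\}$ of the compact K\"ahler threefold $X$. The overarching strategy is to produce, after a bimeromorphic modification, an algebraic approximation of $X$: since the fundamental group is a bimeromorphic invariant among smooth compact K\"ahler manifolds and is preserved by small deformations via Ehresmann's theorem, the projective fibers of the approximating family will give a projective manifold $Y$ with $\pi_1(Y)\simeq\pi_1(X)$. If $a(X)=3$, then $X$ is Moishezon and Moishezon's theorem yields a projective modification directly. For $a(X)\le 1$, I would invoke the results of Graf and the third author referenced in the abstract, which give a bimeromorphic algebraic approximation in those cases; these are more tractable because the base of the algebraic reduction has dimension at most one, so one can work essentially fiber-by-fiber using the classification of non-algebraic compact K\"ahler surfaces.

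The core of the argument is the new case $a(X)=2$. After suitable bimeromorphic modifications I may assume that the algebraic reduction is a proper holomorphic fibration $f\colon X\to S$ with connected fibers onto a smooth projective surface. The generic fiber is a smooth compact curve; it cannot be $\mathbb{P}^1$ (else $X$ would be birational to a $\mathbb{P}^1$-bundle over $S$, hence Moishezon), and a fibration of higher-genus curves over a projective base is itself projective through its moduli map, so $f$ must be an elliptic fibration. The key observation is that the relative Jacobian $J\to S$ is a \emph{projective} elliptic fibration and $X$ is a complex-analytic torsor over $J$; the obstruction to algebraicity of $X$ is the class of this torsor, sitting in a suitable analytic analogue of $H^2(S,J)$. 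The plan is then to deform this torsor class toward a class of torsion type within a one-parameter analytic family, producing $\mathcal{X}\to S\times\Delta$ whose general member is projective and whose central fiber recovers $X$.

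The main obstacle is precisely this last step: one must show that the torsor moduli --- essentially a transcendental, period-theoretic object --- can be approximated by torsion classes within a \emph{genuinely holomorphic} deformation, not merely a formal or $C^\infty$ one, and that the resulting total space deforms smoothly. A further technical difficulty is the treatment of multiple fibers and Kodaira-type singular fibers of $f$ now sitting over a two-dimensional base: these can obstruct smoothness of the deformation and must be analyzed carefully to ensure that the approximation preserves $\pi_1(X)$. Making this density/approximation statement work unconditionally for elliptic threefold fibrations over projective surfaces is, I expect, the technical heart of the argument.
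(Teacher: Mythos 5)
Your overall architecture --- dispose of $a(X)=3$ by Moishezon's theorem, handle $a(X)\le 1$ by quoting known approximation results, and concentrate on $a(X)=2$ by viewing the elliptic fibration $f\colon X\to S$ as a torsor over its relative Jacobian and deforming the torsor class to a dense set of torsion (hence relatively projective) classes --- is exactly the strategy of the paper for the key case. (For $a(X)\le 1$ the paper actually argues differently, via the $\Gamma$-reduction, orbifold fundamental groups and virtual torsion-freeness, but it acknowledges in a footnote that the approximation route you suggest also works.) The problem is that your proposal stops precisely where the mathematical content begins: you yourself flag the "density/approximation statement" as the main obstacle and offer no argument for it. This is a genuine gap, not a detail to be filled in. Concretely, three things are missing. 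First, $f$ need not admit local meromorphic sections at every point of $S$ (multiple fibres over divisors in the discriminant), so the torsor description over $S$ is not available; the paper must pass to a finite Galois cover $\tilde S\to S$ killing the multiplicities and then carry out the entire deformation $G$-equivariantly in order to descend back to $S$ without changing $\pi_1$. Second, the class of $X$ a priori lives only in $H^1(\tilde S,\mathcal J(H)_{mer})$ (note it is $H^1$ of the sheaf of sections of the Jacobian fibration, not an $H^2$ as you write); one must use the K\"ahler hypothesis to show that a positive multiple lies in $H^1(\tilde S,\mathcal J(H)^{\mathbb W})$ and that its image $\mathtt{c}(\eta(f))$ in $H^2(\tilde S,j_*H)$ is torsion --- this is where the K\"ahler class of $X$ actually enters, and without it the density statement is simply false.

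Third, and this is the technical heart: the density of projective members in the resulting locally trivial family parametrized by $H^1(\tilde S,\mathcal L)$, $\mathcal L=R^1f_*\mathcal O_X$, is equivalent to the surjectivity of the map $H^1(\tilde S,j_*H_{\mathbb R})\to H^1(\tilde S,\mathcal L)$. When $f$ is smooth this follows from Deligne's pure Hodge structure on $H^1(\tilde S,H)$, but in the presence of degenerate fibres over a two-dimensional base it does not; the paper proves it by taking a simultaneous functorial resolution of the (singular) Weierstra{\ss} family, applying a relative Buchdahl-type projectivity criterion, and checking via the Leray spectral sequence and the vanishing $R^jf_*\mathcal O_X=0$ for $j\ge 2$ that the composite $H^1(\tilde S,\mathcal L)\to H^1(\mathcal X_b,T_{\mathcal X_b})\to H^2(\mathcal X_b,\mathcal O_{\mathcal X_b})\to H^1(\tilde S,\mathcal L)$ is an isomorphism. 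None of these steps is routine, and your proposal offers no substitute for them; as written it is a correct plan but not a proof.
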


Note that unlike other problems on fundamental groups, this conjecture does not reduce to the case of surfaces: while, by the Lefschetz hyperplane theorem, the fundamental group of any projective manifold is realised by a projective surface, it is a priori not clear if the same holds in the K\"ahler category. Several partial results on Conjecture \ref{conjecturegroups} have
been obtained in the last years (cf.~\cite{CCE1},~\cite[Th\'eor\`eme 0.2]{CCE2}, and~\cite[Corollary 1.3]{Cla16}). In this paper we give a complete answer in dimension three:

\begin{theorem} \label{theorempione}
Let $X$ be a smooth compact K\"ahler threefold. Then $\pi_1(X)$ is  projective.
\end{theorem}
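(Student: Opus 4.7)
The strategy I would pursue is to exploit the bimeromorphic invariance of $\pi_1$ for smooth compact Kähler manifolds (via weak factorisation and the simple connectedness of blow-up exceptional divisors), which allows me to replace $X$ freely by any smooth bimeromorphic model. The master plan is then to realise some such model as the central fibre of an \emph{algebraic approximation}, i.e.\ a smooth proper family $\cX \to \Delta$ over a disc whose general fibre is projective. Once this is achieved, Ehresmann's fibration theorem identifies $\pi_1(X)$ with the projective fundamental group of a general fibre, and we are done.

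\textbf{Case analysis by algebraic dimension.} I would distinguish cases according to the algebraic dimension $a(X) \in \{0,1,2,3\}$. If $a(X) = 3$, then $X$ is Moishezon and Hironaka furnishes a projective bimeromorphic model, so $\pi_1(X)$ is projective at once. If $a(X) = 2$, this is precisely the range $a(X) = \dim X - 1$ covered by the bimeromorphic algebraic approximation theorem announced in the abstract, and the master plan applies verbatim.

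\textbf{Low algebraic dimension.} The delicate regime is $a(X) \le 1$, where Voisin-type obstructions may genuinely prevent the existence of an algebraic approximation even bimeromorphically, so projectivity of $\pi_1$ must be established internally. Here I would run the Kähler minimal model programme in dimension three (Höring-Peternell and successors) together with abundance, and replace $X$ by a smooth bimeromorphic model carrying a natural fibration (the algebraic reduction, or the Iitaka fibration) whose base has dimension at most one. A careful analysis of the smooth fibres, invoking the classification of non-algebraic Kähler threefolds of small algebraic dimension (torus quotients, elliptic and hyperelliptic-type fibrations, uniruled pieces via MRC), combined with the homotopy exact sequence of the fibration and Campana's orbifold fundamental-group formalism to handle multiple fibres, realises $\pi_1(X)$ as an iterated extension of fundamental groups of projective curves and of projective (or Kähler, hence projective by Buchdahl-Lamari) surfaces. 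One then concludes using that the class of projective fundamental groups is closed under such orbifold extensions.

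\textbf{Main obstacle.} The crux is this low-algebraic-dimension case: with no algebraic approximation to fall back upon, one must exploit the full strength of the Kähler threefold MMP and abundance, the structure theory of non-algebraic Kähler threefolds, and delicate bookkeeping of fundamental groups across multiple and singular fibres through Campana's orbifold machinery, while ensuring at each step that the resulting group extension stays inside the class of projective fundamental groups.
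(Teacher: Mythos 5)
Your division by algebraic dimension, and your treatment of the cases $a(X)=3$ (Moishezon) and $a(X)=2$ (bimeromorphic algebraic approximation for the elliptic algebraic reduction over a projective surface, followed by a locally trivial/smooth family argument to transport $\pi_1$), coincide with the paper's strategy. The gap is in the regime $a(X)\le 1$, and it is a real one: your concluding step rests on the assertion that ``the class of projective fundamental groups is closed under such orbifold extensions.'' No such closure property is known --- it is not even known whether an extension of a projective group by a projective group is a K\"ahler group --- so realising $\pi_1(X)$ as an iterated extension of curve and surface groups does not finish the argument. Relatedly, when $a(X)=0$ there is no algebraic reduction or Iitaka fibration over a positive-dimensional base to run your fibration analysis on; this case requires the abelianity theorem of Campana--Claudon (special threefolds have virtually abelian $\pi_1$) combined with the Baues--Riesterer theorem that virtually abelian K\"ahler groups are projective, neither of which your sketch supplies. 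Your parenthetical ``K\"ahler, hence projective by Buchdahl--Lamari'' is also off: K\"ahler surfaces are not projective in general, and what is actually needed is that their fundamental groups are projective, which goes through Kodaira's classification and ($G$-equivariant) deformation of elliptic surfaces.

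What the paper does instead for $a(X)\le 1$ is more specific at every step. For $a(X)=1$ it first shows $\pi_1(X)$ is virtually torsion-free (using that the general algebraic-reduction fibre has abelian $\pi_1$ and that the extension class over the base curve is torsion), then argues by the $\gamma$-dimension: for $\gamma\dim\le 1$ it uses Siu's theorem; for $\gamma\dim=2$ the torsion-freeness kills the orbifold structure on the base surface, reducing to surface groups handled by classification and equivariant deformation; for $\gamma\dim=3$ it invokes Campana--Zhang to make the fibration smooth after an \'etale cover and then Claudon's result on smooth families of tori. Crucially, the passage back from a finite \'etale Galois cover $\tilde X\to X$ to $X$ itself is not an abstract extension argument but the explicit construction $\pi_1(X)\simeq\pi_1((Z\times P)/G)$ with $P$ a simply connected projective manifold carrying a free $G$-action. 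You would need to replace your closure-under-extensions step by arguments of this kind for the proof to go through.
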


The proof of the result comes in several steps: if $X$ is covered by rational curves,
then its MRC-fibration $X \dashrightarrow Z$ induces an isomorphism 
$\pi_1(X) \simeq \pi_1(Z)$ \cite[Theorem 5.2]{Kol93}
\cite[Corollary 1]{BC15}, so we are done. If $X$ is not covered by rational curves,
we make a case distinction based on the algebraic dimension, i.e. the transcendence degree of the field of meromorphic functions on $X$. The case $a(X)=0$ has been solved in \cite{CC14} (see also \cite[Corollary 1.8]{Gra16}), and for the case $a(X)=1$ we can describe in detail the structure of the fundamental group using the algebraic reduction $X \dto C$ 
dominating a curve\footnote{After the submission of the first
version of this paper to the arxiv, the third author posted his preprint \cite{Lin16} on algebraic approximation which implies this case. Our proof is completely different and should be useful for generalisations to higher dimension.}. 
The most difficult case is when $a(X)=2$, where the resolution of the algebraic reduction defines
an elliptic fibration $X' \rightarrow S$ over a surface. Here the structure of the fundamental group is not known, even for a projective threefold. The main contribution of this paper is to use the theory of elliptic fibrations developed by Nakayama~\cite{NakW, Nak02c, NakLocal} to show the existence of a smooth bimeromorphic model $X'$ of $X$ which admits algebraic approximations. The arguments work in fact without the assumption that $\dim X = 3$. We will then derive the projectivity of K\"ahler groups in this case as a corollary.

\subsection{Algebraic approximation} 

Voisin's examples \cite{Voi07} 
show that there exist compact K\"ahler manifolds of dimension at least eight
such that none of their {\em smooth} bimeromorphic models deform to a projective manifold. Her examples are uniruled and up to now, no non-uniruled manifold has been discovered which satisfies the same property. In higher dimension, mild singularities occur naturally in the bimeromorphic models considered by the minimal model program. In this spirit, Peternell and independently Campana proposed a more flexible, bimeromorphic version of Kodaira's problem: 

\begin{conjecture} \label{conjectureapproximation}
Let $X$ be a compact K\"ahler manifold that is not uniruled. Then there exists a bimeromorphic map $X \dashrightarrow X'$ to a normal compact K\"ahler space $X'$ with terminal singularities that admits an algebraic approximation (cf. Definition \ref{definitionapproximation}).
\end{conjecture}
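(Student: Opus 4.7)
The plan is to attack the conjecture by induction on $\dim X$ combined with a case distinction on the algebraic dimension $a(X)$, using the algebraic reduction as the organising fibration and the known low-$a(X)$ cases as the base of the induction. After replacing $X$ by a smooth bimeromorphic model and resolving the algebraic reduction, one obtains a holomorphic fibration $\alpha \colon X \to B$ onto a smooth projective variety $B$ with $\dim B = a(X)$, whose general fibre $F$ is a compact K\"ahler manifold of algebraic dimension zero. If $a(X) = \dim X$ then $X$ is Moishezon and K\"ahler, hence bimeromorphic to a projective manifold, so one may assume $a(X) < \dim X$ and in particular $\dim F \geq 1$. Using a relative K\"ahler MMP (known in low dimension after H\"oring--Peternell, conjectural in general), one should further reduce to the situation where $X$ has terminal singularities and $K_X$ is $\alpha$-nef.

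The first step is to deform the generic fibre: since $F$ is non-uniruled with $a(F) = 0$, \cite{CC14} together with \cite{Gra16} produces a bimeromorphic model of $F$ with terminal singularities admitting an algebraic approximation. The second step is to spread this across $B$: one wants a simultaneous deformation of the pair $(B,\Delta_\alpha)$ and of $\alpha$ itself, where $\Delta_\alpha$ encodes the discriminant of $\alpha$, such that the deformed base stays projective while the deformed total space stays K\"ahler with terminal singularities. For $\dim F = 1$ this is exactly the elliptic-fibration content of the present paper, built on Nakayama's obstruction theory \cite{NakW, Nak02c, NakLocal}: one lifts deformations of $B$ by separately controlling the $j$-invariant, the relative Jacobian together with its Brauer/Tate-Shafarevich class, and the multiple-fibre data. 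For $\dim F \geq 2$ the generic fibre decomposes, up to \'etale cover, into tori, Calabi--Yau and irreducible symplectic factors by the K\"ahler Beauville--Bogomolov decomposition, and the analogue of Nakayama's analysis should take the form of a period-map description of the relative moduli of such objects over $B$, combined with a fibrewise algebraic approximation theorem for each factor type.

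The third step is the globalisation: given a family $\mathcal{B} \to \Delta$ of deformations of $B$ in which projective fibres are dense, one constructs the desired family $\mathcal{X} \to \Delta$ by lifting $\mathcal{B}$ through the relative period/moduli map built in step two, then verifies K\"ahlerness of the total space by a Kodaira-Spencer argument adapted to the singular fibres of $\alpha$; density of projective $B_t$ combined with the approximation of the generic fibre should force $\mathcal{X}_t$ to be projective for a dense set of $t$.

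The main obstacle is the relative moduli theory required in step two: one needs a satisfactory relative moduli space (or period map) for families of non-uniruled compact K\"ahler manifolds with $a = 0$ over a positive-dimensional base $B$, density of projective members inside it, and control over how such deformations interact with the discriminant locus and with codimension-one degenerations of the fibrewise Beauville--Bogomolov decomposition type. The irreducible symplectic case appears the most delicate, since even the absolute algebraic approximation statement rests on the global Torelli theorem, and a K\"ahler-theoretic substitute for Torelli and for the twistor/Kuga--Satake construction in families is the genuinely missing ingredient; this is where the bulk of the work would lie.
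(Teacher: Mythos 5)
The statement you are asked to prove is Conjecture \ref{conjectureapproximation}, and the paper itself does not prove it: it remains open in general, and the paper's contribution (Theorem \ref{theoremapproximation}) is precisely the case $a(X)=\dim X-1$, which together with \cite{Gra16, Lin16, Lin17b} settles only dimension three. Your proposal is likewise not a proof but a research programme, and you say so yourself in the last paragraph: the relative moduli/period theory for fibres of dimension at least two, a K\"ahler substitute for global Torelli, and the control of degenerations over the discriminant are all left as ``genuinely missing ingredients''. So the honest verdict is that there is a gap --- indeed the same gap that keeps the conjecture open --- and the proposal cannot be accepted as a proof of the statement. That said, the portion of your outline devoted to $\dim F=1$ is an accurate description of what the paper actually does for Theorem \ref{theoremapproximation}: base change to obtain local meromorphic sections, Nakayama's classification of elliptic fibrations via $H^1(S,\mathcal J(H)^{\mathbb W})$ and $H^1(S,\mathcal J(H)_{mer})$ \cite{NakW, Nak02c, NakLocal}, the tautological locally trivial family over $H^1(S,\mathcal L)$, density of torsion classes via surjectivity of $H^1(S,j_*H_{\R})\to H^1(S,\mathcal L)$, and a relative Buchdahl-type criterion.

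Two concrete factual errors in the set-up should also be flagged. First, the general fibre $F$ of the algebraic reduction does \emph{not} have algebraic dimension zero in general: in the very case the paper treats, $a(X)=\dim X-1$, the general fibre is an elliptic curve, so $a(F)=\dim F=1$; the correct statement is only that meromorphic functions on $X$ restrict to constants on $F$. Consequently your first step, which invokes \cite{CC14} and \cite{Gra16} for fibres with $a(F)=0$, does not apply as stated. Second, the appeal to a Beauville--Bogomolov decomposition of the generic fibre presupposes $c_1(F)=0$, which need not hold: already for threefolds with $a(X)=1$ the general fibre of the algebraic reduction can be a ruled surface over an elliptic curve \cite{CP00}. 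Any serious attack on the conjecture along your lines would have to replace these two steps by a structure theory of non-algebraic K\"ahler fibres that is currently unavailable.
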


Algebraic approximation provides an explicit way to prove that a K\"ahler group
is projective: since $X'$ has terminal singularities, the fundamental group is invariant under the bimeromorphic map $X \dashrightarrow X'$ \cite{Tak03}. 
If we can always choose the algebraic approximation $\mathcal X \rightarrow \Delta$ to be a locally trivial deformation in the sense of \cite[p.627]{FK87} \cite{Ser06}, then Conjecture
\ref{conjectureapproximation} implies Conjecture \ref{conjecturegroups}.

Very recently Graf \cite{Gra16} and the third author \cite{Lin16, Lin17a, Lin17b} have made progress on the Kodaira problem,
by proving the existence of algebraic approximations for all smooth compact K\"ahler threefolds of Kodaira dimension $\gk$ at most one (including the case $\gk = -\infty$, namely uniruled threefolds~\cite[Cor.1.4]{a21}).
We prove Conjecture \ref{conjectureapproximation} for manifolds of the highest algebraic dimension: 

\begin{theorem} \label{theoremapproximation}
Let $X$ be a compact K\"ahler manifold of algebraic dimension $a(X)= \dim X - 1$.
Then there exists a bimeromorphic map $X' \dashrightarrow X$ 
such that $X'$ is a compact K\"ahler manifold admitting an algebraic approximation.
\end{theorem}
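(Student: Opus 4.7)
My strategy is to use the algebraic reduction to exhibit $X$ as bimeromorphic to an elliptic fibration over a projective base, and then to construct an algebraic approximation by deforming this elliptic structure.

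Since $a(X) = \dim X - 1$, the algebraic reduction $f : X \dashrightarrow B$ has general fibre a smooth compact curve of algebraic dimension zero, which forces the fibre to be either $\mathbb{P}^{1}$ or an elliptic curve: a fibre of genus $\geq 2$ would yield further global meromorphic functions on $X$ via its relative pluricanonical system, contradicting the value of $a(X)$. The rational case forces $X$ to be uniruled and can be handled separately by MRC--type arguments. In the remaining, substantive case, resolving indeterminacies and singularities produces a smooth compact K\"ahler model $X'$ bimeromorphic to $X$ together with an elliptic fibration $\varphi : X' \to S$ onto a smooth projective manifold $S$ of dimension $\dim X - 1$; the proof reduces to constructing an algebraic approximation of $(X', \varphi)$.

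The heart of the argument is Nakayama's structure theory of elliptic fibrations \cite{NakW, Nak02c, NakLocal}. After additional bimeromorphic modifications of $X'$ and blow-ups of $S$, chosen so that the discriminant locus is a simple normal crossing divisor and the local monodromies take a suitable normal form, the fibration $\varphi$ is determined by two pieces of data: the associated basic (Jacobian) elliptic fibration $J \to S$, which is automatically projective because it is built by algebraic constructions from the projective base $S$, and a twist class $\eta \in H^{2}(S, \mathcal{J}^{\sharp})$ valued in an appropriate sheaf $\mathcal{J}^{\sharp}$ of local meromorphic sections of $J / S$. The K\"ahler property of $X'$ corresponds to a cone condition on the image of $\eta$ in $H^{2}(S, \mathbb{R})$, while projectivity of the total space corresponds to $\eta$ being of torsion type modulo the period lattice of $J / S$.

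The algebraic approximation is then obtained by deforming $\eta$ along an analytic arc $\eta_{t}$ through $\eta_{0} = \eta$ that accumulates torsion classes; stability of the K\"ahler condition under small deformation keeps the nearby total spaces K\"ahler, and density of torsion classes ensures that they can be chosen so that the corresponding total spaces $\mathcal{X}_{t} \to S$ are projective. The main obstacle is the interplay between such deformations of the twist class and the multiple or degenerate fibres of $\varphi$: in Nakayama's formalism the local model of the fibration near a degenerate fibre imposes nontrivial compatibility conditions on the deformation of $\eta$, and the preliminary bimeromorphic modifications must be engineered to kill the obstruction classes to globally lifting the twist deformation. A secondary but important point is to ensure that the resulting family $\mathcal{X} \to \Delta$ is a \emph{locally trivial} deformation in the sense of \cite{FK87, Ser06}, so that the fundamental group is preserved under the approximation---this is what later permits the deduction of Theorem \ref{theorempione} from Theorem \ref{theoremapproximation}.
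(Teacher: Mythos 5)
Your overall strategy is the same as the paper's: reduce to an elliptic fibration $\varphi: X' \to S$ over a projective base via the algebraic reduction, invoke Nakayama's theory to encode $\varphi$ by a VHS plus a twist class, and deform the twist class towards torsion classes, which correspond to fibrations projective over $S$. (Two small slips: the twist class lives in $H^1$, not $H^2$, of the relevant sheaf; and the $\mathbb{P}^1$-fibre case is vacuous rather than needing a separate MRC argument, since a $\mathbb{P}^1$-fibration over the projective base of the algebraic reduction would force $a(X)=\dim X$.) However, the proposal has a genuine gap at the single most substantial point of the proof: you assert that one can deform $\eta$ along an arc ``that accumulates torsion classes'' and appeal to ``density of torsion classes,'' but this density is not automatic and is precisely what has to be proved. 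The deformation space is $V=H^1(S,\mathcal{L})$ with $\mathcal{L}=R^1\varphi_*\sO$, and the parameters giving torsion twist classes are the preimages under $\exp$ of the image of $H^1(S,j_*H_\Q)$; these are dense in $V$ if and only if the map $H^1(S,j_*H_\R)\to H^1(S,\mathcal{L})$ is \emph{surjective}. When the fibration degenerates ($S^\star\neq S$) this surjectivity is a nontrivial Hodge-theoretic statement (Theorem \ref{th:hodge_surjective} in the paper), established by applying a relative Buchdahl-type criterion to a simultaneous functorial resolution of the tautological Weierstra{\ss} family and computing the composed map through $H^2(\sX_b,\sO_{\sX_b})/H^2(S,\sO_S)\simeq H^1(S,\sL)$. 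Without this input your family could a priori contain no projective member at all.

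A second gap concerns multiple fibres and the absence of local meromorphic sections. Nakayama's $H^1(S,\mathcal{J}(H)_{mer})$ classification, and the whole twist-class formalism you use, requires local meromorphic sections over every point of $S$; a general elliptic fibration does not have them, and no bimeromorphic modification of the total space can remove multiple fibres in codimension one, so ``engineering the preliminary modifications to kill the obstruction classes'' cannot work as stated. The paper's solution is a ramified finite Galois base change $\tilde S\to S$ (Proposition \ref{prop:local meromorphic section}) after which local meromorphic sections exist, followed by a $G$-equivariant construction of the locally trivial deformation over $\tilde S\times V^G$ (with $V^G$ the Galois-invariant part of the deformation space) and descent to $S$ by taking the quotient and a functorial simultaneous resolution. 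This equivariant descent is also what guarantees that the resulting family is locally trivial with smooth central fibre bimeromorphic to $X$, which you correctly identify as necessary for the application to fundamental groups but do not actually secure.
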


As $\kappa(X) \le a(X)$, the conclusion of Theorem~\ref{theoremapproximation} holds in particular when $\kappa(X) = \dim X-1$. Thus Theorem \ref{theoremapproximation} together with \cite{Gra16, Lin16, Lin17b} establish Conjecture \ref{conjectureapproximation} for all compact K\"ahler threefolds. As far as we can see, our techniques do not imply the existence of algebraic approximations for threefolds whose algebraic reduction $X' \rightarrow S$ is over a surface. In fact while resolving the bimeromorphic map $X' \dashrightarrow X$ appearing in our statement, one might blow up some curves that 
are not contracted by $X' \rightarrow S$. In this case it is difficult to relate the deformation theories
of $X$ and $X' \rightarrow S$. Thus the biregular Kodaira problem is still open for threefolds with $a(X) = 2$.

Finally as we already mentioned above, Theorem~\ref{theoremapproximation} has the following immediate corollary on the projectivity of K\"ahler fundamental groups.

\begin{corollary} \label{theoremmain} 
Let $X$ be a smooth compact K\"ahler manifold of dimension $n$ and algebraic dimension $a(X)=n-1$. Then $\pi_1(X)$ is projective.
\end{corollary}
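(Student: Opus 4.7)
The statement should follow almost immediately from Theorem~\ref{theoremapproximation} combined with standard facts on fundamental groups under bimeromorphic modifications and under smooth deformations. There is no real obstacle at this stage: the whole substance of the argument has been packaged into Theorem~\ref{theoremapproximation}.

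The plan is to proceed in three short steps. First, I invoke Theorem~\ref{theoremapproximation} to produce a smooth compact K\"ahler manifold $X'$ equipped with a bimeromorphic map $X' \dashrightarrow X$ and with an algebraic approximation $\pi\colon \mathcal{X}\to \Delta$, i.e.\ a proper smooth family of compact K\"ahler manifolds over a disc with central fibre $\mathcal{X}_0 \cong X'$ and with a sequence $t_n\to 0$ such that the fibres $\mathcal{X}_{t_n}$ are projective.

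Second, I compare $\pi_1(X)$ with $\pi_1(X')$. Since both $X$ and $X'$ are smooth compact K\"ahler manifolds, the bimeromorphic map between them can be resolved by a sequence of blow-ups with smooth centres, each of which leaves the fundamental group unchanged; hence $\pi_1(X)\simeq \pi_1(X')$. (Equivalently, one can invoke the invariance of $\pi_1$ under bimeromorphic maps between smooth K\"ahler manifolds, a classical consequence of the weak factorisation theorem, or refer to~\cite{Tak03} as cited earlier in the introduction.)

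Third, I compare $\pi_1(X')$ with the fundamental group of a projective fibre of $\pi$. Because $\pi$ is a proper smooth family over the contractible base $\Delta$, Ehresmann's fibration theorem asserts that $\pi$ is topologically a locally trivial fibration; in particular $\pi_1(\mathcal{X}_t)\simeq \pi_1(X')$ for every $t\in \Delta$. Choosing any $t_n$ from the convergent sequence along which $\mathcal{X}_{t_n}$ is projective yields $\pi_1(X)\simeq \pi_1(X')\simeq \pi_1(\mathcal{X}_{t_n})$, and the right-hand side is projective by definition, concluding the proof.
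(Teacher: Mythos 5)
Your proof is correct and follows essentially the same route as the paper, which likewise combines Theorem~\ref{theoremapproximation} with bimeromorphic invariance of $\pi_1$ for smooth compact K\"ahler manifolds and the topological triviality of the approximating family. The only point worth noting is that Definition~\ref{definitionapproximation} by itself only guarantees a \emph{flat} family (which need not preserve $\pi_1$), so your Ehresmann step really relies on the fact that the family constructed in the proof of Theorem~\ref{theoremapproximation} is a smooth (indeed locally trivial, via Lemma~\ref{lem:resolution_family}) family of compact manifolds --- an appeal to the construction rather than the bare statement, which the paper's own one-line proof makes equally implicitly.
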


{\bf Acknowledgements.} We would like to thank J. Cao, A. Dimca, P. Graf, J. Koll\'ar, and N. Nakayama
for very helpful communications on the various technical problems related to this project.
This work was partially supported by the Agence Nationale de la Recherche grant project Foliage (ANR-16-CE40-0008) and Hodgefun (ANR-16-CE40-0011-02).

\section{Notation and basic definitions}

All complex spaces are supposed to be of finite dimension,  
a complex manifold is a smooth Hausdorff irreducible complex space.
A fibration is a proper surjective morphism with connected fibres between complex spaces.
A fibration $\varphi: X \rightarrow Y$ is locally projective if there exists an open covering 
$U_i \subset Y$ such that $\fibre{\varphi}{U_i} \rightarrow U_i$ is projective, i.e. admits 
a relatively ample line bundle.

We refer to \cite{Gra62, Fujiki78, Dem85} for basic definitions about $(p,q)$-forms
and K\"ahler forms in the singular case.  

\begin{definition} \label{definitionkaehlermorphism}
Let $\holom{\varphi}{X}{Y}$  be a fibration. A relative K\"ahler form is 
a {\em smooth} real closed $(1,1)$-form $\omega$ such that for every $\varphi$-fibre $F$, the restriction
$\omega|_F$ is a K\"ahler form. We say that $\varphi$ is a K\"ahler fibration if such a relative K\"ahler form exists.
\end{definition}

\begin{remark} \label{remarkrelplusbase}
If $\varphi$ is a K\"ahler fibration over a K\"ahler base $Y$, then $X_U := \fibre{\varphi}{U} $ is K\"ahler 
for every relatively compact open set $U \subset Y$. In fact if $\omega_X$ is a relative K\"ahler
form on $X$ and $\omega_Y$ is a K\"ahler form on $Y$, then for all $m \gg 0$ the form $\omega_X + m \varphi^* \omega_Y$
is K\"ahler \cite[Proposition 4.6 (2)]{Bin83}, \cite{Fujiki78}.  
\end{remark}

\begin{definition} \label{definitionapproximation}
Let $X$ be a normal compact K\"ahler space. We say that $X$ admits an algebraic approximation if there exist a flat morphism $\pi: \mathcal X \rightarrow \Delta$ and a sequence $(t_n)_{n \in \N}$ in $\Delta$ converging to $0$ such that $\fibre{\pi}{0}$ is isomorphic to $X$ and $\fibre{\pi}{t_n}$ is a projective variety for all $n$.
\end{definition}

In general a flat deformation does not preserve the fundamental group, this holds however
for deformations that are locally trivial in the sense of \cite[p.627]{FK87}. For the case
of  fibrations we will work with an even more restricted class of deformations:

\begin{definition} \label{definitionlocallytrivial}
Let $\holom{\varphi}{X}{S}$ be a fibration between normal compact complex spaces.
A locally trivial deformation of $(X, \varphi)$ is a pair of fibrations
$$
\pi: \mathcal X \rightarrow \Delta, \qquad \Phi: \mathcal X \rightarrow S \times \Delta
$$ 
such that $\pi = p_S \circ \Phi$, where $\holom{p_S}{S \times \Delta}{\Delta}$ is the projection onto the first factor and the following holds:
\begin{itemize}
\item $X \simeq \fibre{\pi}{0}$ and  $\varphi = \Phi|_{\fibre{\pi}{0}}$;
\item There exists an open cover $(U_i)_{i \in I}$ of $S$ such that (up to replacing $\Delta$ by a smaller polydisc containing $0$) we have
$$
\fibre{\Phi}{U_i \times \Delta} \simeq \fibre{\varphi}{U_i} \times \Delta
$$
for all $i \in I$.
\end{itemize}
\end{definition}

Let us recall some basic definitions on geometric orbifolds introduced in \cite{Cam04}.
They are pairs $(X,\Delta)$ where $X$ is a complex manifold and $\Delta$ a Weil $\Q$-divisor; they appear naturally as bases of fibrations to describe their multiple fibres:
let \holom{\varphi}{X}{Y} be a fibration between compact K\"ahler manifolds and consider $\vert\Delta\vert\subset Y$ the union of the codimension one components of the $\varphi$-singular locus. If $D\subset\vert\Delta\vert$, we can write
$$\varphi^*(D)=\sum_j m_jD_j+R,$$
where $D_j$ is mapped onto $D$ and $\varphi(R)$ has codimension at least 2 in $Y$.

The integer $m(\varphi,D)=\mathrm{gcd}_j(m_j)$ 
is called the classical multiplicity of $\varphi$ above $D$ and we can consider the $\Q$-divisor
\begin{equation} \label{orbifoldbase}
\Delta=\sum_{D\subset \vert\Delta\vert}(1-\frac{1}{m(\varphi,D)})D.
\end{equation}
The pair $(Y,\Delta)$ is called the \emph{orbifold base} of $\varphi$.

\begin{remark} \label{remarkmultiplicitieselliptic}
In Campana's work \cite{Cam04} both the classical and the non-classical multiplicities  $\mathrm{inf}_j(m_j)$ play an important role. Let us note that for elliptic fibrations,
these multiplicities coincide: the problem is local on the base, moreover we can reduce
to the case of a relatively minimal elliptic fibration. Then it is sufficient to observe
that in Kodaira's classification of singular fibres which are not multiple \cite{Kod60}, \cite[Chapter V, Table 3]{BHPV04}, there is always
at least one irreducible component of multiplicity one.  
\end{remark}

Let us recall what smoothness means for a geometric orbifold.

\begin{definition}
A geometric orbifold $(X/\Delta)$ is said to be smooth if the underlying variety $X$ is a smooth manifold and if the $\Q$-divisor $\Delta$ has only normal crossings. If in a coordinate patch, the support of $\Delta$ can be defined by an equation
$$\prod_{j=1}^r z_j=0,$$
we will say that these coordinates are adapted to $\Delta$.
\end{definition}

In the category of smooth orbifolds, there is a good notion of fundamental group. It is defined in the following way : if $\Delta=\sum_{j\in J}(1-\frac{1}{m_j})\Delta_j$, choose a small loop $\gamma_j$ around each component $\Delta_j$ of the support of $\Delta$. 
Consider now the fundamental group of $X^\star=X\backslash \textrm{Supp}(\Delta)$ and its normal subgroup generated by the loops $\gamma_j^{m_j}$:
$$\langle\!\langle \gamma_j^{m_j},\,j\in J \rangle\!\rangle \leq \pi_1(X^\star).$$

\begin{definition} \label{defipi1orbifold}
The fundamental group of $(X/\Delta)$ is defined to be:
$$\pi_1(X/\Delta):=\pi_1(X^\star)/\langle\!\langle \gamma_j^{m_j},\,j\in J \rangle\!\rangle.$$
\end{definition}

\begin{remark} \label{remarkp1orbifold}
By definition the loops $\gamma_j$ define torsion elements in  $\pi_1(X/\Delta)$. Thus we see that if
$\pi_1(X/\Delta)$ is torsion-free, the natural surjection $ \pi_1(X/\Delta) \twoheadrightarrow \pi_1(X)$
is an isomorphism.
\end{remark}

\section{Elliptic fibrations}


The structure of elliptic fibrations and their deformation theory has been described
in detail in the landmark paper of Kodaira \cite{Kod60} for surfaces and its generalisation 
to higher dimension by Nakayama \cite{NakLocal, Nak02c}. For the convenience of
the reader we review this theory and explain some additional properties that
will be important in the proof of Theorem \ref{theoremapproximation}.

\subsection{Smooth fibrations and their deformations} 
\label{subsectiondeformations}

Let $S^\star$ be a complex manifold, and let $\holom{f^\star}{X^\star}{S^\star}$ be a smooth elliptic fibration. We associate a variation of Hodge structures $H$ (\textsc{vhs} for short in the sequel) of weight 1 over $S^\star$, the underlying local system being given by the first cohomology group of the fibers $H^1(X_s,\Z)\simeq \Z^2$.
A rank 2 and weight 1 \textsc{vhs} over $S^\star$ is equivalent to the following data: a holomorphic function to the upper half plane
$$
\tau_H:\widetilde{S^\star}\to \mathbb{H}
$$
defined on the universal cover $\widetilde{S^\star} \rightarrow S^\star$
which is equivariant under a representation
$$
\rho_H:\pi_1(S^\star)\To \mathrm{SL}_2(\Z).
$$
Now if $\gamma\in\pi_1(S^\star)$, let us write
$$\rho_H(\gamma)=\left(\begin{array}{cc}a_\gamma & b_\gamma \\ c_\gamma & d_\gamma
\end{array}\right)$$
the image of $\gamma$ under $\rho_H$. It is then straightforward to check that the following formula
$$((m,n),\gamma)\cdot (x,z)=\left(\gamma(x),\frac{z+m\tau_H(x)+n}{c_\gamma\tau_H(x)+d_\gamma}\right)$$
defines an action of the semi-direct product $\Z^2\rtimes \pi_1(S^\star)$ on $\tilde{S^\star}\times \C$
which is fixed point free and properly discontinuous. We can then form the quotient to get
a smooth elliptic fibration
$$
p:\mathbf{J}(H)\To S^\star
$$
which is endowed with a canonical section $\sigma:S^\star\to \mathbf{J}(H)$.
Following the terminology of \cite{Kod, Nak02c} we call $p$ the 
\emph{basic elliptic fibration} (associated to $H$). Note that 
$f^\star$ and $p$ are locally isomorphic over $S^\star$, but their global structure 
can be quite different.

Since $p$ has a global section, its sheaf of holomorphic sections $\mathcal{J}(H)$ is a 
well-defined sheaf of abelian groups and we have an exact sequence of sheaves
\begin{equation}\label{eq:exp sequence smooth}
0\To H\To \mathcal{L}_H\To \mathcal{J}(H)\To 0
\end{equation}
where 
$$
\mathcal{L}_H := R^1p_*\mathcal{O}_{\mathbf{J}(H)} \simeq R^1f^\star_* \mathcal{O}_{X^\star}.
$$
Let us note that $\mathcal{L}_H$ can also be interpreted as the zeroth graded piece $\cH/F^1\cH$ of the Hodge filtration on $\cH \colonec H \otimes \cO_{S^\star}$ and thus depends only on the \textsc{vhs} $H$. Since $f^\star$ is smooth, it has local sections over every point of $S^\star$ and the difference of two such sections on the intersection of their sets of definition can be seen as a section of $\mathcal{J}(H)$. In this way we have just associated a cohomology class
$$
\eta(f^\star)\in H^1(S^\star,\mathcal{J}(H))
$$
 to $f^\star$, which is independent of the choices of local sections.
The class $\eta(f^\star)$ can be also constructed in a more conceptual way. Pushing forward the exponential sequence on $X^\star$ by $f^\star$ yields a long exact sequence on $S^\star$:
\begin{equation}\label{eq:push forward exp sequence}
0\To R^1f^\star_*\Z_X\To R^1f^\star_*\mathcal{O}_X\To R^1f^\star_*\mathcal{O}^*_X
\To R^2f^\star_*\Z_X\simeq \Z_{S^\star} \To 0
\end{equation}
Recall that $\mathcal{J}(H) =R^1f^\star_*\mathcal{O}_X/R^1f^\star_*\Z_X$, the class $\eta(f^\star)$ is the image of $1\in H^0(S^\star,\Z_{S^\star})$ under the connecting morphism 
$$
\delta :H^0(S^\star,\Z_{S^\star})\To H^1(S^\star,\mathcal{J}(H)).
$$
The map $\eta$ is in fact a bijection.
\begin{theorem} \label{theoremsmoothtorsor} \cite[Theorem 10.1, Theorem 11.5]{Kod}
\cite[Proposition 1.3.1, Proposition 1.3.3]{NakLocal}
Let $S^\star$ be a complex manifold, and let $H$ be a \textsc{vhs} of rank 2 and weight 1 over $S^\star$. 
\begin{enumerate}[(a)]
\item The map $f^\star \mapsto \eta(f^\star)$ defines a one-to-one correspondence between the isomorphism classes of 
smooth elliptic fibrations over $S^\star$ inducing $H$ and the elements of the cohomology group $H^1(S^\star,\mathcal{J}(H))$. 
\item A smooth elliptic fibration  $f^\star:X^\star\to S^\star$ is a projective morphism 
if and only if $\eta(f^\star)$ is a torsion class. 
\end{enumerate}
\end{theorem}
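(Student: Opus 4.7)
Both parts are classical, and I would follow the Kodaira--Nakayama philosophy of treating any smooth elliptic fibration $f^\star:X^\star\to S^\star$ inducing $H$ as an analytic torsor over the basic model $p:\mathbf{J}(H)\to S^\star$. The equivalence of a weight-one rank-two \textsc{vhs} with the pair $(\tau_H,\rho_H)$ already makes $f^\star$ locally isomorphic to $p$ over $S^\star$, and the automorphism sheaf of the torsor structure is precisely $\mathcal{J}(H)$. Thus the formal skeleton of (a) is the classical classification of analytic torsors by $H^1(S^\star,\mathcal{J}(H))$.

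For (a), I would realize the bijection explicitly in both directions. \emph{Surjectivity:} given $\alpha\in H^1(S^\star,\mathcal{J}(H))$, represent it by a \v{C}ech cocycle $(\sigma_{ij})$ on an open cover $(U_i)$ of $S^\star$, with $\sigma_{ij}\in\Gamma(U_{ij},\mathcal{J}(H))$ a local section of $p$. Reglue $\bigsqcup_i p^{-1}(U_i)$ over $U_{ij}$ by translating by $\sigma_{ij}$ in the fibers of $p$; the cocycle condition makes the gluing consistent, producing a smooth elliptic fibration $f^\star:X^\star\to S^\star$ with $\eta(f^\star)=\alpha$. Since translations act trivially on $H^1$ of the fibers, the underlying \textsc{vhs} is still $H$. \emph{Injectivity:} if $\eta(f_1^\star)=\eta(f_2^\star)$ for two fibrations $f_k^\star$ inducing $H$, pick local isomorphisms $\varphi_i^{(k)}:p^{-1}(U_i)\xrightarrow{\sim}(f_k^\star)^{-1}(U_i)$ over $U_i$. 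The \v{C}ech cocycles $\sigma_{ij}^{(k)}$ they produce represent the same class, hence differ by a coboundary $(\tau_i)$; correcting $\varphi_i^{(2)}$ by translation by $\tau_i$ equalizes the cocycles, whereupon the local isomorphisms glue to a global isomorphism $X_1^\star\simeq X_2^\star$ over $S^\star$.

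For (b), the vanishing $\eta(f^\star)=0$ exhibits a global section of $f^\star$, so $f^\star\simeq p$ and three times the zero section is relatively ample, making $f^\star$ projective. More generally, if $\eta(f^\star)$ is $n$-torsion, the multiplication-by-$n$ exact sequence
\begin{equation*}
0\To \mathcal{J}(H)[n]\To \mathcal{J}(H)\xrightarrow{\ n\ }\mathcal{J}(H)\To 0
\end{equation*}
produces a class in $H^1(S^\star,\mathcal{J}(H)[n])$ mapping to $\eta(f^\star)$; geometrically this lifted class encodes a degree-$n$ multisection of $f^\star$, or equivalently a line bundle on $X^\star$ of positive fibre degree. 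Combined with the pullback of a sufficiently ample class from $S^\star$ (working in a relatively compact open if needed), this yields relative ampleness. Conversely, a relatively ample line bundle on $X^\star$ has some positive fibre degree $n$, whose associated divisor defines a degree-$n$ multisection; its class in $H^1(S^\star,\mathcal{J}(H))$ is $n\cdot\eta(f^\star)$, forcing the latter to vanish.

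The main obstacle is (b), specifically the dictionary between torsion classes in $H^1(S^\star,\mathcal{J}(H))$ and genuine multisections of $f^\star$: this bridging of a sheaf-theoretic datum on $S^\star$ with a geometric object on the total space requires carefully unpacking the multiplication-by-$n$ sequence and identifying the resulting cohomology class with a divisor of the correct fibre degree, which is the technical heart of \cite[Prop.~1.3.1, Prop.~1.3.3]{NakLocal}. The remaining parts are either formal (surjectivity in (a)) or a straightforward cocycle rearrangement (injectivity in (a); projectivity of $p$ via three times the zero section).
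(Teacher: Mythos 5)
The paper does not prove this statement: it is quoted verbatim from Kodaira and Nakayama (the references given in the theorem header), so there is no internal proof to compare against. Your argument is essentially the standard torsor proof from those sources and is sound in outline: part (a) is the classification of analytic torsors under the sheaf of sections of the basic fibration $p:\mathbf{J}(H)\to S^\star$, using that $H^1(U_i,\mathcal{J}(H))=0$ on a Stein contractible cover so that local $H$-compatible trivialisations exist and differ by translations; part (b) reduces to producing, resp.\ using, a line bundle of positive fibre degree.

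Two small inaccuracies worth fixing. First, in the forward direction of (b), a lift of $\eta(f^\star)$ to $H^1(S^\star,\mathcal{J}(H)[n])$ glues the local $n$-torsion multisections of $\mathbf{J}(H)$ into a closed multisection of $X^\star$ of degree $n^2$, not $n$ (the $n$-torsion subgroup of an elliptic curve has $n^2$ points); this is harmless, since any positive fibre degree gives relative ampleness for a smooth family of curves, and no twist by an ample class from $S^\star$ is needed. Second, in the converse direction, over a non-compact $S^\star$ a relatively ample line bundle need not admit a global divisor, so ``its associated divisor defines a degree-$n$ multisection'' is not quite available; the clean route is the one already encoded in the sequence \eqref{eq:push forward exp sequence} of the paper: a line bundle of fibre degree $n$ is a global section of $R^1f^\star_*\mathcal{O}^*_X$ mapping to $n\in H^0(S^\star,\Z_{S^\star})$, whence $n\cdot\eta(f^\star)=\delta(n)=0$ by exactness. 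An alternative, equally standard, proof of the forward direction avoids the torsion-lift entirely: if $n\eta(f^\star)=0$, the fibrewise ``sum of $n$ points'' map gives a finite morphism $X^\star\to\mathbf{J}(H)$ over $S^\star$ of degree $n^2$, and one pulls back the relatively ample bundle $\mathcal{O}(3\sigma)$ from the basic fibration.
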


The short exact sequence \eqref{eq:exp sequence smooth}
induces an exact sequence
\begin{equation}\label{eq:def space H^1}
H^1(S^\star,\mathcal{L}_H)\stackrel{\mathrm{exp}}{\To}H^1(S^\star,\mathcal{J}(H))\stackrel{\mathtt{c}}{\To} H^2(S^\star,H).
\end{equation}
The vector space $V \colonec H^1(S^\star,\mathcal{L}_H)$ appears as a deformation space
of smooth elliptic fibrations over $S^\star$. More precisely, given an elliptic fibration $f^\star:X^\star\to S^\star$ inducing $H$, there exists a family of elliptic fibrations $\Pi : \cX \to S^\star \times V$ over $S^\star$ parameterized by $V$ such that the fiber over $t\in H^1(S^\star,\mathcal{L}_H)$ is an elliptic fibration whose associated element in $H^1(S^\star,\mathcal{J}(H))$ is $\exp(t) + \eta(f^\star)$. Viewing $\Pi$ as a smooth elliptic fibration, the cohomology class $\eta(\Pi) \in H^1(S^\star \times V,\mathcal{J}(\pr^*H))$ associated to $\Pi$ is equal to  $\exp(\xi) + \pr^*\eta(f^\star)$, where $\pr : S^\star \times V \to S^\star$ denotes the projection onto the first factor and 
$$\xi \in H^1(S^\star ,\mathcal{L}_{H}) \otimes H^0(V,\mathcal{O}_{V}) \subset H^1(S^\star \times V,\mathcal{L}_{\pr^*H})$$ 
the element which corresponds to the identity map $V \to H^1(S^\star ,\mathcal{L}_{H})$.

\begin{theorem}\label{theoremsmoothdeformation} \cite[Proposition 2.4 and 2.5]{Cla16}
\begin{enumerate}[(a)]
\item Let $\holom{f_1^\star}{X_1^\star}{S^\star}$ and $\holom{f_2^\star}{X_2^\star}{S^\star}$
be two smooth elliptic fibrations over $S^\star$ inducing $H$.
Then $f_1^\star$ can be deformed into $f_2^\star$ in the family described above if and only if 
$\mathtt{c}(\eta(f_1^\star))=\mathtt{c}(\eta(f_2^\star))$
\item Let $f^\star:X^\star\to S^\star$ be a smooth elliptic fibration such that $X^\star$ is K\"ahler.
Then $\mathtt{c}(\eta(f^\star))$ is torsion and $f^\star$ can be deformed to a projective fibration.
\end{enumerate}
\end{theorem}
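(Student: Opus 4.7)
For part (a), the exactness of \eqref{eq:def space H^1} gives $\mathtt{c}\circ\exp=0$. Since the family $\Pi$ has cohomology class $\eta(\Pi)=\exp(\xi)+\pr^*\eta(f_1^\star)$, restricting to the fiber over $t\in V$ produces a smooth elliptic fibration whose $\eta$-class is $\exp(t)+\eta(f_1^\star)$, and its image under $\mathtt{c}$ is therefore $\mathtt{c}(\eta(f_1^\star))$. This gives necessity. Conversely, if $\mathtt{c}(\eta(f_1^\star))=\mathtt{c}(\eta(f_2^\star))$, exactness of \eqref{eq:def space H^1} supplies some $t\in V$ with $\exp(t)=\eta(f_2^\star)-\eta(f_1^\star)$; the fiber of $\Pi$ at $t$ then has $\eta$-class equal to $\eta(f_2^\star)$, and by the bijectivity of $f^\star\mapsto\eta(f^\star)$ in Theorem \ref{theoremsmoothtorsor}(a) is isomorphic to $f_2^\star$.

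For part (b), the plan is first to show that $\mathtt{c}(\eta(f^\star))$ is torsion, and then to use part (a) to realise the projective deformation. For the torsion statement, I would identify $\mathtt{c}(\eta(f^\star))\in H^2(S^\star,H)$, up to sign, with the second Leray differential
$$d_2\colon H^0(S^\star,R^2f^\star_*\Z)\longrightarrow H^2(S^\star,R^1f^\star_*\Z)=H^2(S^\star,H)$$
applied to the canonical generator $1$, using the identification $R^2f^\star_*\Z\simeq\Z_{S^\star}$ coming from the fact that the fibers are elliptic curves. This identification is obtained by comparing the 4-term exact sequence
$$0\to H\to\mathcal{L}_H\to R^1f^\star_*\mathcal{O}^*_{X^\star}\to\Z_{S^\star}\to 0$$
produced by the push-forward of the exponential sequence on $X^\star$ with its factorization into \eqref{eq:exp sequence smooth} and the short exact sequence defining $\eta(f^\star)$ via $\delta(1)$; the Yoneda class of the 4-term sequence computes the iterated connecting morphism $\mathtt{c}\circ\delta$. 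Since $f^\star$ is proper and smooth and $X^\star$ is K\"ahler, the Blanchard--Deligne theorem provides $E_2$-degeneration of the Leray spectral sequence with rational coefficients, so $d_2(1)$ vanishes in $H^2(S^\star,H)\otimes\Q$, and $\mathtt{c}(\eta(f^\star))$ is therefore torsion.

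Choose $N\ge 1$ with $N\mathtt{c}(\eta(f^\star))=0$; then $N\eta(f^\star)\in\ker(\mathtt{c})=\mathrm{Im}(\exp)$, so we may write $N\eta(f^\star)=\exp(s)$ for some $s\in V$. Because $V$ is a complex vector space, $s/N$ makes sense, and setting $\eta'\colonec\eta(f^\star)-\exp(s/N)\in H^1(S^\star,\mathcal{J}(H))$ gives $N\eta'=N\eta(f^\star)-\exp(s)=0$, together with $\mathtt{c}(\eta')=\mathtt{c}(\eta(f^\star))$ since $\mathtt{c}\circ\exp=0$. By Theorem \ref{theoremsmoothtorsor}(a) there is a smooth elliptic fibration $g^\star\colon Y^\star\to S^\star$ with $\eta(g^\star)=\eta'$, and by Theorem \ref{theoremsmoothtorsor}(b) it is projective because $\eta'$ is torsion. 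Part (a) then produces the desired deformation of $f^\star$ to $g^\star$ inside the family $\Pi$.

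The main difficulty I expect lies in the identification $\mathtt{c}(\eta(f^\star))\sim\pm d_2(1)$ together with a correct formulation of Blanchard--Deligne when the base $S^\star$ is not assumed compact. The identification is essentially a Yoneda-class computation with the 4-term sequence above and requires care with signs and spectral sequence conventions; the degeneration uses the K\"ahler hypothesis on $X^\star$ to produce a cohomology class whose restriction to each fiber of $f^\star$ is K\"ahler, which is the input driving $E_2$-degeneration.
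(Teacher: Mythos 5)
Your argument is correct. The paper itself defers this statement to \cite{Cla16}, but your strategy --- formal exactness of \eqref{eq:def space H^1} for part (a), and for part (b) the identification of $\mathtt{c}(\eta(f^\star))$ with the Leray differential $d_2(1)$, killed rationally by a K\"ahler class of positive fibre degree --- is exactly the one the paper runs in its singular generalisation (Lemma \ref{lem:Kahler implies virtually weierstrass} and Theorem \ref{th:c torsion general}, where the identification $\mathtt{c}\circ\delta=d_2$ is quoted from \cite[Lemma 8.1]{Lin16}); note only that full Blanchard--Deligne degeneration is more than you need, since the surjectivity of $H^2(X^\star,\Q)\to H^0(S^\star,R^2f^\star_*\Q)$ (a K\"ahler class has non-zero fibre integral) already forces $d_2(1)$ to vanish rationally.
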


\subsection{Local structures and Weierstra{\ss} models}\label{subs:local_structure}

\begin{definition} \label{definitionelliptic}
An elliptic fibration $\holom{f}{X}{S}$ is a fibration whose general fibre $X_s$ is isomorphic to an elliptic curve. The elliptic fibration $f$
is relatively minimal if $K_X$ is $\Q$-Cartier and there exists a line bundle $L$ on $S$ such that
$$
m K_X \simeq f^* L
$$
for some $m \in \N^*$. 

We say that $f$ has a meromorphic section in a point $s \in S$ if there exists an analytic neighbourhood $s \in U \subset S$ and a meromorphic map $\merom{s}{U}{X}$ 
such that $f \circ s$ is the inclusion $U \hto S$.
\end{definition}

Apart from smooth elliptic fibrations discussed in the last section, the second simplest examples of elliptic fibrations are Weierstra\ss~ fibrations. These fibrations turn out to be crucial in the study of elliptic fibrations.

\begin{definition} \label{definitionweierstrass}
Let $S$ be a complex manifold. 
\begin{enumerate}[(a)]
\item A Weierstra\ss~ fibration over $S$  
consists of a line bundle $\mathcal{L}$ on $S$ and two sections $\alpha\in H^0(S,\mathcal{L}^{(-4)})$ and $\beta\in H^0(S,\mathcal{L}^{(-6)})$ such that $4\alpha^3+27\beta^2$ is a non zero section of $H^0(S,\mathcal{L}^{(-12)})$. With these data, we can associate a projective family of elliptic curves:
$$
\mathbb{W}:=\mathbb{W}(\mathcal{L},\alpha,\beta)=\set{Y^2Z=X^3+\alpha XZ^2+\beta Z^3}
\subset \PP
$$
where 
$$
\PP:=\PP(\mathcal{O}_S\oplus \mathcal{L}^2\oplus \mathcal{L}^3)
$$
and $X$, $Y$ and $Z$ are canonical sections of $\mathcal{O}_\PP(1)\otimes \mathcal{L}^{(-2)}$, $\mathcal{O}_\PP(1)\otimes \mathcal{L}^{(-3)}$ and $\mathcal{O}_\PP(1)$ respectively.
The restriction of the natural projection $\PP\to S$ to $\mathbb{W}$ gives rise to a flat morphism
$p_\mathbb{W}:\mathbb{W}\to S$
whose fibres are irreducible cubic plane curves. This elliptic fibration is endowed with a distinguished section $\set{X=Z=0}$.
\item A Weierstra{\ss} fibration $\mathbb{W}(\mathcal{L},\alpha,\beta)$ is said to be minimal if there is no prime divisor $\gD \subset S$ such that $\mathrm{div}(\alpha) \ge 4 \gD$ and $\mathrm{div}(\beta) \ge 6 \gD$.
\item A (minimal) locally Weierstra\ss~fibration is an elliptic fibration $\holom{f}{X}{S}$ such that there exists an open covering $(U_i)_{i \in I}$
of $S$ such that the restriction of $f$ to $X_i:=\fibre{f}{U_i}$ is a (minimal) Weierstra\ss~fibration.
\end{enumerate}
\end{definition}


\begin{remark} \label{remarkpropertiesweierstrass}
Since the total space of the Weierstra\ss~ fibration $p_\mathbb{W}:\mathbb{W}\to S$ is by definition a hypersurface in a manifold, the complex space $\mathbb{W}$ is Gorenstein, so the canonical sheaf is locally free. In the case where $\mathbb{W}(\mathcal{L},\alpha,\beta)$ is minimal and the discriminant divisor $\mathrm{div}(4\alpha^3+27\beta^2)$ has normal crossings, we know by \cite[Corollary 2.4]{NakW} that $\mathbb{W}$ has rational, hence canonical, singularities (and it holds of course for the total space of a locally Weierstra\ss~ fibration).
Since $p_\mathbb{W}$ is flat and the restriction of $K_{\mathbb{W}}$ to every fibre is trivial, we have $K_{\mathbb{W}} \simeq p^*_\mathbb{W} L$ for some line bundle $L$ on $S$. Thus a locally Weierstra\ss~ fibration is relatively minimal.
\end{remark}

It is well-known that smooth elliptic fibrations with a section are always Weierstra\ss:

\begin{theorem}\label{th:rep weierstrass smooth} \cite{Kod} \cite[Proposition 1.2.4]{NakLocal} Let $S$ be a complex manifold, 
and let $\holom{f}{X}{S}$ be a smooth elliptic fibration admitting a section $\holom{s}{S}{X}$.
Then there exists a canonically defined isomorphism $X \rightarrow \mathbb{W}$ over $S$ to some Weierstra\ss~ fibration
$\mathbb{W} \rightarrow S$ sending $s$ onto the distinguished section.
\end{theorem}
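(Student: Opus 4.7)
The plan is to canonically extract the Weierstra\ss~data from $(f,s)$ and realise $X$ as a cubic hypersurface in an associated $\mathbb{P}^2$-bundle. Write $D := s(S) \subset X$. Since $f$ is smooth elliptic, Grauert's theorem makes $R^1 f_* \mathcal{O}_X$ into a line bundle on $S$; define $\mathcal{M} := (R^1 f_* \mathcal{O}_X)^{-1}$. Relative Serre duality gives $\omega_{X/S} \simeq f^*\mathcal{M}$, and adjunction along the section yields $s^*\mathcal{O}_X(nD) \simeq \mathcal{M}^{-n}$. Up to replacing $\mathcal{M}$ by its inverse to match the paper's convention for $\mathbb{P}(\mathcal{E})$, this $\mathcal{M}$ will play the role of $\mathcal{L}$ in the Weierstra\ss~triple.

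Next, I would compute the direct images $f_*\mathcal{O}_X(nD)$ via the short exact sequences
$$0 \to \mathcal{O}_X((n-1)D) \to \mathcal{O}_X(nD) \to \mathcal{O}_D(nD) \to 0.$$
Because each fibre is an elliptic curve and $\mathcal{O}_X(nD)$ has fibrewise degree $n \geq 1 > 2g - 2$, the sheaf $R^1 f_*\mathcal{O}_X(nD)$ vanishes for $n \geq 1$ and $f_*\mathcal{O}_X(nD)$ is locally free of rank $n$, equipped with a pole filtration whose graded pieces are $\mathcal{M}^{-k}$. The crucial point is to split this filtration canonically. For that I would use the global involution $\iota : X \to X$ coming from the relative group scheme structure on $(X,s)$: fibrewise $\iota$ acts as $t \mapsto -t$ on any uniformiser vanishing on $D$, hence as $(-1)^k$ on the order-$k$ piece of the pole filtration. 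Combined with the canonical vanishing-of-constant-term normalisation for even-order poles (which pins down the $\wp$-type section), this produces a canonical isomorphism
$$f_*\mathcal{O}_X(3D) \simeq \mathcal{O}_S \oplus \mathcal{M}^{-2} \oplus \mathcal{M}^{-3},$$
together with distinguished sections $Z$, $x$, $y$ of $\mathcal{O}_X(D)$, $\mathcal{O}_X(2D)\otimes f^*\mathcal{M}^{2}$, $\mathcal{O}_X(3D)\otimes f^*\mathcal{M}^{3}$ respectively.

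With the splitting in hand, relative very-ampleness of $\mathcal{O}_X(3D)$ (inherited fibrewise from the very-ampleness of $|3p|$ on an elliptic curve) yields a closed $S$-embedding
$$X \hookrightarrow \mathbb{P}_S(\mathcal{O}_S \oplus \mathcal{M}^{-2} \oplus \mathcal{M}^{-3})$$
canonically identified with the projective bundle in the statement, sending $D$ to the section $\{x = Z = 0\}$. Finally, in the rank-$6$ bundle $f_*\mathcal{O}_X(6D)$ the seven monomials $Z^3, Z^2 x, Z x^2, x^3, Z^2 y, xy, y^2$ are linearly dependent; $\iota$-equivariance (with $y$ antiinvariant and $x, Z$ invariant) kills the mixed monomials $Z^2 y$ and $xy$, and normalising $y^2 Z$ and $x^3$ to unit coefficients produces the relation
$$y^2 Z = x^3 + \alpha\, x Z^2 + \beta\, Z^3$$
with $\alpha \in H^0(S, \mathcal{M}^{-4})$ and $\beta \in H^0(S, \mathcal{M}^{-6})$. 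Smoothness of $f$ forces $4\alpha^3 + 27\beta^2$ not to vanish identically, identifying $X$ with $\mathbb{W}(\mathcal{M}, \alpha, \beta)$ and $s$ with the distinguished section.

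The only non-formal point---hence the main obstacle---is the canonical splitting of the pole filtration in the middle step. This depends essentially on the global existence of the involution $\iota$, which in turn rests on the (classical but non-trivial) fact that a smooth elliptic fibration admitting a section is automatically a relative commutative group scheme. Once this canonical splitting is in place, the remaining steps are direct consequences of Riemann--Roch for elliptic curves applied fibrewise.
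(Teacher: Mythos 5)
Your argument is the classical proof of this statement: the paper itself gives no proof but quotes it from \cite{Kod} and \cite[Proposition 1.2.4]{NakLocal}, and what is done there is exactly what you propose --- compute $f_*\mathcal{O}_X(nD)$, split the pole filtration via the relative $\wp$ and $\wp'$, embed by the relative linear system $|3D|$ and read off the cubic relation. So the route is the expected one and the overall structure is sound.

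One step is phrased in a way that would not survive scrutiny, and it is precisely the step you single out as the crux. The involution $\iota$ acts on the order-$k$ graded piece by $(-1)^k$, so it does isolate the odd line inside $f_*\mathcal{O}_X(3D)$ (its $(-1)$-eigensheaf is a line subbundle mapping isomorphically onto $\mathrm{gr}^3\simeq\mathcal{M}^{-3}$, which gives you $y$); but it acts \emph{trivially} on all of $f_*\mathcal{O}_X(2D)$, since both graded pieces $\mathcal{O}_S$ and $\mathcal{M}^{-2}$ are even. Hence $\iota$ contributes nothing to splitting $\mathcal{O}_S\subset f_*\mathcal{O}_X(2D)$; that splitting rests entirely on the ``vanishing constant term'' normalisation, and you should say what makes that normalisation canonical. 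The constant term of a function with a double pole is \emph{not} independent of the choice of uniformiser (replacing $t$ by $t+\lambda t^2$ shifts it), so the condition only makes sense after fixing the $2$-jet of a coordinate along $D$. The canonical choice is the flat coordinate $z_\omega$ with $dz_\omega=\omega$ for a local generator $\omega$ of $f_*\omega_{X/S}\simeq\mathcal{M}$; then $x_\omega$ is the unique local section of $f_*\mathcal{O}_X(2D)$ such that $x_\omega-z_\omega^{-2}$ vanishes along $D$, and the homogeneity $x_{\lambda\omega}=\lambda^{-2}x_\omega$ makes these glue to a canonical lift $\mathcal{M}^{-2}\to f_*\mathcal{O}_X(2D)$. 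The same confusion reappears at the end: $\iota$-equivariance kills only the odd monomials $xyZ$ and $yZ^2$, while the even monomial $x^2Z$ (which you omit from the discussion) is killed by the $\wp$-normalisation of $x$ --- this is the absence of a $\wp^2$ term in $(\wp')^2=4\wp^3-g_2\wp-g_3$ --- and not by the involution. With these two points made explicit the proof is complete.
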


For non-smooth fibrations we can only hope to work with bimeromorphic models:

\begin{definition} \label{definitiongenweierstrassmodel}
Let $\holom{f}{X}{S}$ be an elliptic fibration and let $\holom{f^\star}{X^\star}{S^\star}$ denote the restriction to a non-empty Zariski open subset $S^\star \subset S$ such that $f$ is smooth. 
A  (locally) Weierstra\ss~model of $f$ is a (locally) Weierstra\ss~fibration $p : \mathbb{W} \rightarrow S$ such that $X^\star$ is isomorphic to $\W^\star \colonec p^{-1}(S^\star)$ over $S$.
\end{definition}

The existence of meromorphic sections is an obvious necessary condition for the existence of a Weierstra\ss~model. When the base $S$ is smooth, it is also sufficient:

\begin{theorem} \label{theoremlocalproperties}  \cite[Theorem 2.5]{NakW}
Let $S$ be a complex manifold, and let $\holom{f}{X}{S}$ be an elliptic fibration.
If $f$ admits a meromorphic section, then 
$f$ has a unique minimal Weierstra\ss~model.
\end{theorem}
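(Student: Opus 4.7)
The strategy is to reduce the existence statement to the smooth case treated in Theorem~\ref{th:rep weierstrass smooth}, and then to extend the resulting Weierstra\ss~data from the smooth locus to all of $S$, exploiting the hypothesis that $S$ is a manifold.

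Let $S^\star \subseteq S$ be the open dense subset over which $f$ is smooth and the given meromorphic section is regular. By Theorem~\ref{th:rep weierstrass smooth} applied to $f^\star := f|_{f^{-1}(S^\star)}$, there is a canonical Weierstra\ss~model $\mathbb{W}(\mathcal{L}^\star, \alpha^\star, \beta^\star)$ of $f^\star$ on $S^\star$. Since $S$ is smooth and $S \setminus S^\star$ is a proper closed analytic subset, line bundles on $S^\star$ and their sections extend uniquely across analytic loci of codimension at least two; it therefore suffices to extend $(\mathcal{L}^\star, \alpha^\star, \beta^\star)$ across each prime divisor $D \subset S \setminus S^\star$, and thus to work locally around a general point of $D$.

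Cutting by a small disc transverse to $D$ reduces the problem to a one-parameter degeneration of elliptic curves carrying a meromorphic section, and the relatively minimal model of such a family is governed by Kodaira's classification of singular fibres. Each fibre type comes with an explicit Weierstra\ss~presentation prescribing the orders of vanishing of $\alpha$ and $\beta$ along $D$. These orders are constant along the generic locus of $D$, which suffices to extend $\mathcal{L}^\star$ to a line bundle $\mathcal{L}$ on $S$ (after twisting $\mathcal{L}^\star$ by an appropriate multiple of $\mathcal{O}_S(D)$) and to extend $\alpha^\star$, $\beta^\star$ to global sections $\alpha \in H^0(S, \mathcal{L}^{(-4)})$, $\beta \in H^0(S, \mathcal{L}^{(-6)})$. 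If the resulting data is non-minimal, say there is a prime divisor $\Delta$ with $\mathrm{div}(\alpha)\ge 4\Delta$ and $\mathrm{div}(\beta)\ge 6\Delta$, replace $\mathcal{L}$ by $\mathcal{L}(\Delta)$ and divide $\alpha$ and $\beta$ by the fourth and sixth powers of a defining section of $\Delta$; the resulting Weierstra\ss~fibration is isomorphic to the old one away from $\Delta$. Iterating finitely many times produces a minimal Weierstra\ss~model of $f$.

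For uniqueness, two minimal Weierstra\ss~models of $f$ restrict on $S^\star$ to the canonical model supplied by Theorem~\ref{th:rep weierstrass smooth}, so they are canonically isomorphic there. The isomorphism between the underlying line bundles extends across the codimension two strata by Hartogs-type extension, and the minimality condition forces a unique twist by $\mathcal{O}_S(-)$ along each divisorial component of $S \setminus S^\star$, hence the two models agree on $S$. The main obstacle in this plan is the third paragraph: reading off from Kodaira's local classification the precise vanishing orders that $\alpha^\star$ and $\beta^\star$ must acquire along each $D$, and checking that the Weierstra\ss~data so constructed on an open set of $D$ extends further across the codimension two strata of $S \setminus S^\star$ where the generic fibre type along $D$ may specialise.
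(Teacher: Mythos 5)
This statement is quoted in the paper directly from Nakayama \cite[Theorem 2.5]{NakW}; the paper itself contains no proof, so your attempt can only be measured against Nakayama's argument. Your overall outline (canonical model over $S^\star$ via Theorem~\ref{th:rep weierstrass smooth}, extension of the Weierstra\ss~data across the divisorial part of $S\setminus S^\star$, Hartogs in codimension two, then minimalisation and a twist argument for uniqueness) is a sensible skeleton, and your minimalisation and uniqueness paragraphs are essentially the standard ones. But the step you flag as ``the main obstacle'' is not a verification you have postponed --- it is the actual content of the theorem, and as written it has two genuine gaps. First, the hypotheses only make $X$ a complex space: the restriction of $f$ to a disc transverse to a component $D$ of $S\setminus S^\star$ need not be a smooth, relatively minimal elliptic surface (the total space may be singular, the central fibre non-reduced or of excess dimension), so Kodaira's classification does not apply until you have passed to a controlled bimeromorphic model and checked that doing so does not change the Weierstra\ss~data, which is only a bimeromorphic invariant. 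Second, and more seriously: even granting that on each transverse slice the functions $\alpha^\star,\beta^\star$ (in a local trivialisation of $\mathcal{L}^\star$ near a general point of $D$) are meromorphic at the origin with pole order bounded by the fibre type, slice-wise meromorphy does not formally imply that $z_1^{4k}\alpha^\star$ and $z_1^{6k}\beta^\star$ extend holomorphically across $\{z_1=0\}$ as functions of all variables; one needs a Levi/Rothstein-type extension theorem or an a priori uniform bound. Your phrase ``these orders are constant along the generic locus of $D$'' assumes precisely what must be proved.

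Nakayama's proof circumvents the slice-by-slice analysis: after a bimeromorphic modification making the meromorphic section an honest divisor $\Sigma$ and the fibration sufficiently flat over the generic point of each component of $S\setminus S^\star$, he forms the rank-three locally free sheaf $f_*\mathcal{O}_X(3\Sigma)$ with its filtration by the $f_*\mathcal{O}_X(k\Sigma)$, whose graded pieces are $\mathcal{O}_S$, $\mathcal{L}^{2}$, $\mathcal{L}^{3}$ with $\mathcal{L}$ defined globally by $R^1f_*\mathcal{O}_X$; embedding into $\mathbb{P}(f_*\mathcal{O}_X(3\Sigma))$ yields the cubic equation, hence $\alpha$ and $\beta$, over the complement of a codimension-two subset of $S$ in one stroke, and reflexivity extends everything across codimension two. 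The minimalisation and uniqueness steps are then as you describe: two minimal data differ by $(\mathcal{L},\alpha,\beta)\mapsto(\mathcal{L}(E),\alpha,\beta)$ for a divisor $E$ supported on $S\setminus S^\star$, and minimality of both sides forces $E=0$. In short, only the existence of the meromorphic extension of $(\alpha^\star,\beta^\star)$ along the divisorial components of $S\setminus S^\star$ is missing from your argument, but that is where all the work lies.
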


For a similar statement concerning locally Weierstra\ss~model, we refer to Remark~\ref{rem-locW}.

The following vanishing result will be useful.

\begin{theorem}\label{thm-van} \cite[Theorem 3.2.3]{NakLocal}
Let $\holom{f}{X}{S}$ be an elliptic fibration such that both $X$ and $S$ are smooth and that $f$ is smooth over the complement of a normal crossing divisor in $S$. Then we have
$$
R^j f_* \sO_X = 0 \qquad \forall \ j \geq 2.
$$
\end{theorem}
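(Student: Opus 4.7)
The assertion is local on $S$, so I fix a point $s \in S$ and work in a small neighbourhood. Let $D \subset S$ be the normal crossing divisor outside of which $f$ is smooth, and $S^\star := S \setminus D$. Over $S^\star$ the sheaves $R^j f^\star_* \cO_{X^\star}$ are read off the weight-one rank-$2$ \textsc{vhs} $H$ attached to $f^\star$ (Section~\ref{subsectiondeformations}): concretely $R^0 f^\star_* \cO_{X^\star} = \cO_{S^\star}$, $R^1 f^\star_* \cO_{X^\star} = \cL_H$, and $R^j f^\star_* \cO_{X^\star} = 0$ for $j \geq 2$ since $H^j(E,\cO_E) = 0$ on an elliptic curve. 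Thus for $j \geq 2$ the sheaf $R^j f_* \cO_X$ is supported on $D$, and the real content of the theorem is the vanishing of its stalks along $D$.

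The strategy is to reduce to a Weierstra{\ss} fibration, for which the vanishing can be computed by hand. Choose an irreducible multi-section of $f$ and normalise to obtain a finite surjection $\pi : \tilde S \to S$ such that the base-changed fibration $\tilde f : \tilde X \to \tilde S$ acquires a meromorphic section. After shrinking $S$ and performing a further log resolution we may assume $\tilde S$ is smooth with normal crossing discriminant; Theorem \ref{theoremlocalproperties} then produces a minimal Weierstra{\ss} model $p : W \to \tilde S$ for $\tilde f$. Let $g : Y \to W$ be a resolution of singularities dominating a resolution of $\tilde X$. By Remark \ref{remarkpropertiesweierstrass}, $W$ has rational Gorenstein singularities, hence $g_* \cO_Y = \cO_W$ and $R^k g_* \cO_Y = 0$ for $k \geq 1$; the Leray spectral sequence for $Y \to W \to \tilde S$ therefore collapses to $R^j (p \circ g)_* \cO_Y = R^j p_* \cO_W$. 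Combined with flat base change along $\pi$ and its faithful flatness, vanishing of $R^j p_* \cO_W$ for $j \geq 2$ implies the desired vanishing for $f$.

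For the Weierstra{\ss} piece, $W$ sits as a divisor in the $\PP^2$-bundle $\pi_\PP : \PP := \PP(\cO_{\tilde S} \oplus \cL^2 \oplus \cL^3) \to \tilde S$ with ideal sheaf $\cO_\PP(-3) \otimes \pi_\PP^* \cL^{6}$ (as the Weierstra{\ss} equation, whose summands $Y^2 Z$, $X^3$, $\alpha X Z^2$ and $\beta Z^3$ all lie in $\cO_\PP(3) \otimes \pi_\PP^* \cL^{-6}$, cuts out $W$ as the zero locus of a section of that bundle). Applying $\pi_{\PP *}$ to
$$0 \to \cO_\PP(-3) \otimes \pi_\PP^* \cL^{6} \to \cO_\PP \to \cO_W \to 0,$$
together with the standard computation of $R^\bullet \pi_{\PP *} \cO_\PP(d)$ via the projection formula and relative Serre duality on the $\PP^2$-bundle, yields $R^j p_* \cO_W = 0$ for $j \geq 2$, completing the reduction.

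The main obstacle is the base-change bookkeeping: one must produce a multi-section whose normalisation $\tilde S$, together with the resolution $Y$ of the base-changed total space, has only rational Gorenstein singularities, so that no spurious higher direct images appear in the comparison, and one must arrange that the discriminant on $\tilde S$ remains normal crossing after the necessary resolutions. These steps are technical but are precisely the sort of construction carried out systematically by Nakayama \cite{NakLocal, Nak02c}.
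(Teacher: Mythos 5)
The paper offers no proof of Theorem \ref{thm-van}: the statement is quoted verbatim from \cite[Theorem 3.2.3]{NakLocal}, so your sketch has to stand on its own, and it has two genuine gaps located exactly where the content of the theorem lies. First, the opening move --- ``choose an irreducible multi-section of $f$'' --- is not available in the stated generality. The theorem assumes only that $X$ and $S$ are smooth and that the discriminant is normal crossing; there is no K\"ahler or local projectivity hypothesis. Over a point $s$ whose fibre is everywhere $2$-dimensional a local multi-section need not exist (a transversal of dimension $\dim S$ must meet a $2$-dimensional fibre component in positive dimension, so it cannot be finite over $S$), and producing local meromorphic sections after a finite base change is precisely what requires the local projectivity hypothesis in the paper's Proposition \ref{prop:local meromorphic section} and in \cite[Theorems 4.3.1 and 4.3.2]{NakLocal}. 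Second, the descent step is asserted rather than proved. Even granting a finite cover $\pi:\tilde S\to S$ with $\tilde S$ smooth (note that normalising a multi-section and then log-resolving $\tilde S$ destroys finiteness, which your trace/flatness argument needs), the fibre product $\tilde X:=X\times_S\tilde S$ is in general non-normal --- for instance along multiple fibres, which are allowed here --- and nothing in your argument controls $R^kg_*\cO_{\hat X}$ for a resolution $g:\hat X\to\tilde X$. Without rationality of the singularities of $\tilde X$ you cannot identify $R^j\tilde f_*\cO_{\tilde X}$ with $R^jp_*\cO_{\W}$, so the vanishing does not descend to $R^jf_*\cO_X$. These are not bookkeeping issues; they are the theorem.

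By contrast, the part you compute in detail, $R^jp_*\cO_\W=0$ for $j\ge 2$, is the trivial part: every fibre of a Weierstra{\ss} fibration is a plane cubic, hence $1$-dimensional, so the vanishing holds for any coherent sheaf by dimension reasons and no ideal-sheaf sequence is needed. Nakayama's argument does not pass through multi-sections: it identifies $R^pf_*\cO_X$ with the zeroth graded piece of the lower canonical extension of the \textsc{vhs} $R^pf^\star_*\C\otimes\cO_{S^\star}$ across the normal crossing boundary --- this is exactly the description the paper quotes from \cite[Lemma 3.2.3]{NakLocal} for $p=1$, where it yields $\cL_{H/S}$ --- and for $p\ge 2$ the underlying local system is already zero because the fibres are curves. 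If you want a self-contained proof, that Hodge-theoretic route is the one to take.
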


The following example explains the importance of the normal crossing condition for the theory of elliptic fibrations: 

\begin{example} Let $S$ be a smooth non-algebraic compact K\"ahler surface that admits an elliptic fibration $\holom{g}{S}{\PP^1}$. 
Let $\mathbb F_1 \rightarrow \PP^1$ be the first Hirzebruch surface, and set $X:= \mathbb F_1 \times_{\PP^1} S$.
Then $X$ is a smooth compact K\"ahler threefold, and we denote by
$f: X \rightarrow \PP^2$ the composition of the elliptic fibration $X \rightarrow \mathbb F_1$ 
with the blowdown $\mathbb F_1 \rightarrow \PP^2$.

Then $f$ is not locally projective since it has a two-dimensional fibre isomorphic
to the non-projective surface $S$. Note however that $g$ has at least $3$ singular fibres
(\cite[Proposition 1]{Bea81}, cf. Proposition \ref{propositionbeauvillekaehler} for a detailed proof in the 
K\"ahler case). Thus the discriminant locus of $f$ consists of at least $3$ lines
meeting in one point. In particular it is not a normal crossing divisor and it is quite easy to see that $R^2f_*\sO_X\neq0$ in this case.
\end{example}

A general elliptic fibration does not admit local meromorphic sections at every point, a fact that is the starting point of Nakayama's global theory of elliptic fibration using the $\partial$-\'etale cohomology.
For our needs we can use the strategy of Kodaira \cite{Kod} to reduce to this case
via base change:

\begin{proposition}\label{prop:local meromorphic section}
Let $\holom{f}{X}{S}$ be an elliptic fibration such that both $X$ and $S$ are smooth and that $f$ is smooth over the complement of a normal crossing divisor in $S$. Suppose that $f$ is locally projective (e.g. when $X$ is a K\"ahler manifold~\cite[Theorem 3.3.3]{NakLocal}) and $S$ is projective, then there exists a finite Galois cover $\tilde S \rightarrow S$ by some projective manifold $\tilde S$ such that  
$$
X \times_S \tilde S \rightarrow \tilde S
$$
has local meromorphic sections over every point of $\tilde{S}$. The elliptic fibration $X \times_S \tilde S \rightarrow \tilde S$ is smooth over the complement of a normal crossing divisor in $\tilde S$.
\end{proposition}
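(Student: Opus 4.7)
The plan is to follow Kodaira's strategy by performing a finite Galois base change which forces the local monodromies of the variation of Hodge structures (\textsc{vhs}) $H = R^1 f^\star_*\Z$ to lie in a torsion-free congruence subgroup of $\mathrm{SL}_2(\Z)$; all remaining singular fibres are then locally of Kodaira type $I_n$, and local meromorphic sections exist essentially for tautological reasons.

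Let $\rho : \pi_1(S^\star) \to \mathrm{SL}_2(\Z)$ be the monodromy of $H$, where $S^\star := S \setminus D$ and $D \subset S$ is the normal crossing discriminant of $f$. Fix an integer $N \geq 3$ and let $\Gamma(N) \subset \mathrm{SL}_2(\Z)$ be the principal congruence subgroup of level $N$, which is torsion-free of finite index by Minkowski's lemma. Then $\rho^{-1}(\Gamma(N)) \subset \pi_1(S^\star)$ has finite index; replacing it by its normal core, I obtain a finite étale Galois cover $\tilde{S}^\star \to S^\star$ over which the pulled back \textsc{vhs} $\tilde{H}$ has monodromy in $\Gamma(N)$. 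I extend this cover to $S$ by normalising $S$ in the function field of $\tilde{S}^\star$ and applying Hironaka's equivariant resolution of singularities, together with an equivariant log resolution of the preimage of $D$, to obtain a smooth projective finite Galois cover $\tilde{S} \to S$ whose ramification divisor $\tilde{D} \subset \tilde{S}$ over $D$ is a normal crossing divisor. Then $\tilde{X} := X \times_S \tilde{S} \to \tilde{S}$ is smooth over $\tilde{S} \setminus \tilde{D}$, which is the required smoothness-over-NCD assertion.

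To show that $\tilde{f} : \tilde{X} \to \tilde{S}$ admits local meromorphic sections, I invoke Borel's quasi-unipotency theorem: the local monodromies of $H$ around components of $D$ are quasi-unipotent, hence after pulling back to $\tilde{S}$ they lie in the torsion-free group $\Gamma(N)$ and are therefore unipotent. Near a point $s \in \tilde{S}$, I pick a polydisk neighbourhood $\Delta$ on which $\tilde{D}$ is a union of coordinate hyperplanes. The smooth locus $\Delta^\star := \Delta \setminus \tilde{D} \simeq (\Delta^*)^k \times \Delta^{n-k}$ is Stein, so Cartan's Theorem~B yields $H^1(\Delta^\star, \mathcal{L}_{\tilde{H}}) = 0$. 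Combining this with the exact sequence \eqref{eq:def space H^1}, the torsor class of $\tilde{f}|_{\Delta^\star}$ embeds into $H^2(\Delta^\star, \tilde{H})$; for $k \leq 1$ this last group vanishes for cohomological-dimension reasons, so $\tilde{f}|_\Delta$ is isomorphic to the basic fibration $\mathbf{J}(\tilde{H})|_\Delta$ which carries its canonical section.

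The main obstacle I foresee is the handling of deep normal-crossing strata $k \geq 2$, where $H^2(\Delta^\star, \tilde{H})$ does not vanish a priori. Here I expect to combine the local projectivity of $\tilde{f}$ (which provides local multi-sections of bounded degree) with Nakayama's local classification \cite[\S 2]{NakLocal}, and to arrange for $N$ to be sufficiently divisible, so that the pulled-back torsor class is actually killed at every point of $\tilde{S}$. Once the local torsor class vanishes, $\tilde{f}$ is locally isomorphic to a basic elliptic fibration with a distinguished section, and a local meromorphic section of the original fibration extends from $\Delta^\star$ to the whole of $\Delta$ by the extension theorem for meromorphic maps across normal crossing divisors.
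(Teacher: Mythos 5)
There is a genuine gap, and it occurs at the two places where the real content of the proposition lives.

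First, your base change does not eliminate multiple fibres in codimension one, and these are the primary obstruction to local meromorphic sections. A meromorphic section near a general point of a component $D_i$ of the discriminant forces the fibre over $D_i$ to contain a component of multiplicity one, so over a divisor carrying a multiple fibre no local meromorphic section can exist. Passing to the cover determined by $\rho^{-1}(\Gamma(N))$ only controls the monodromy, and multiple fibres are invisible to the monodromy: a fibre of type ${}_mI_0$ (a logarithmic transform of a smooth isotrivial family) has trivial local monodromy, so your cover is unramified over such a $D_i$ and the multiple fibre survives unchanged in $X\times_S\tilde S$. The paper's proof addresses exactly this point first: it takes a Kawamata covering (\cite[Proposition 4.1.12]{Laz04a}) ramified to order \emph{exactly} $m_i$ over each $D_i$, where $m_i$ is the multiplicity of the generic fibre over $D_i$; this kills the multiple fibres in codimension one, and only afterwards does one take a further cover and the Galois closure to make the local monodromies unipotent. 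Your unipotency step is the second of these two reductions, not a substitute for the first. Relatedly, even in your $k\le 1$ analysis the final extension of the section from $\Delta^\star$ across $\tilde D$ is not automatic (the closure of its graph need not be analytic), and it is precisely the absence of multiple fibres that makes it work.

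Second, you explicitly leave open the case of deep normal-crossing strata $k\ge 2$, and this is where the substantive input lies. The paper does not prove this from scratch either: having arranged (i) local projectivity, (ii) no multiple fibres in codimension one, and (iii) unipotent local monodromies, it invokes Nakayama's \cite[Theorems 4.3.1 and 4.3.2]{NakLocal}, which assert the existence of local meromorphic sections over every point under exactly these hypotheses. Your hope of ``arranging $N$ sufficiently divisible so that the local torsor class is killed at every point'' is not a proof, and without condition (ii) it cannot be made into one. The correct repair of your argument is: insert the Kawamata covering step to remove multiple fibres, keep your unipotency step, and then cite Nakayama's theorems for the local existence of meromorphic sections rather than attempting the deep-stratum case by hand.
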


This statement is a variant of \cite[Corollary 4.3.3]{NakLocal}: in our case $S$ is projective, but
we lose the control over the branch locus.

\begin{proof}
For every irreducible component $D_i$ of $D$ we denote by $m_i \in \N$ the multiplicity of the generic
fibre over $D_i$. By \cite[Proposition 4.1.12]{Laz04a} we can choose a covering $\tilde S \rightarrow S$ ramifying with multiplicity
exactly $m_i$ over $D_i$ and the ramification divisor has normal crossings. 
By construction the elliptic fibration $X \times_S \tilde S \rightarrow \tilde S$ has no multiple fibre in codimension one.
Up to taking another finite cover and the Galois closure we can suppose that  $\tilde S \rightarrow S$
is Galois and the local monodromies are unipotent.
Since the elliptic fibration is locally projective, we can 
now apply \cite[Theorem 4.3.1 and 4.3.2]{NakLocal} to conclude that it has local meromorphic sections over every point of $\tilde S$.
\end{proof}

\subsection{Elliptic fibrations with local meromorphic sections}

In this subsection we always work under the following 

\begin{assumption} \label{assume}
Let $S$ be a complex manifold, and let $\holom{f}{X}{S}$ be an elliptic fibration having
local meromorphic sections over every point of $S$.
We denote by $j: S^\star \subset S$ a Zariski open subset such that $X^\star := \fibre{f}{S^\star}
\rightarrow S^\star$ is smooth and assume that the complement $S \setminus S^\star$ is a normal crossing divisor.  
\end{assumption}

Denote by $H$ the \textsc{vhs} on $S^\star$ induced by the smooth elliptic
fibration $X^\star \rightarrow S^\star$.
We set
$$
\mathcal{L} := \mathcal{L}_{H/S} := R^1 f_* \sO_X.
$$

Let $p : \mathbf J(H) \rightarrow S^\star$ be the basic elliptic fibration associated 
with $H$. By Theorem~\ref{theoremlocalproperties}, we can extend $p$ to a  Weierstra\ss~
model
$$
p_{\mathbb W}: \mathbb W \rightarrow S.
$$
We also denote by $p: \mathbf B(H) \rightarrow S$ the composition of $p_{\mathbb W}$
with a desingularisation $\mathbf B(H) \rightarrow \mathbb W$. Since $p_{\mathbb W}$ has
a canonical section, the elliptic fibration $p$ has a global meromorphic section.
We call $p$ a basic elliptic fibration associated to $H$ \cite[p.549]{Nak02c}.

By~\cite[Lemma 3.2.3]{NakLocal},  $\mathcal{L}_{H/S}$ is isomorphic to the zeroth graded piece of the Hodge filtration of the lower canonical extension of $\cH = H \otimes \cO_{S^\star}$ to $S$. This induces a natural map $j_* H \rightarrow \mathcal L_{H/S}$, which is injective by~\cite[Lemma 3.1.3]{NakLocal}. Let $\mathcal{J}(H)^\mathbb{W}$ denote the quotient $\mathcal L_{H/S} / j_* H$. The exact sequence 
\begin{equation}\label{exseq-JW}
0 \rightarrow j_* H \rightarrow \mathcal L_{H/S} \rightarrow \mathcal{J}(H)^\mathbb{W} 
\rightarrow 0
\end{equation}
extends the exact sequence \eqref{eq:exp sequence smooth} defined
on $S^\star \subset S$. 

Let $\W^\# \subset \W$ denote the Zariski open of $\W$ consisting of points $x \in \W$ where $p : \W \to S$ is smooth. The variety $\W^\#$ is a complex analytic group variety over $S$, where over a point $t \in S$ which parameterises a nodal (resp. cuspidal) rational curve in $p: \W \to S$, the fiber is the multiplicative group $\bC^\times$ (resp. additive group $\bC$). 
If we denote by $\holom{\nu}{\PP^1 = \bC \cup \{\infty\}}{\W_t}$ the normalisation so that $\infty$ maps to the unique singular
point, the multiplication by $m \in \Z \setminus \{ 0 \}$ on the smooth part of $\W_t$ is induced by
$x \mapsto x^m$ (if $\W_t$ is nodal) or $x \mapsto mx$ (if $\W_t$ is cuspidal).
In both cases a neighbourhood of $\infty$ in $\PP^1$ is mapped to a neighbourhood of $\infty$.

The action of $\W^\#$ on itself extends to an action on $\W$~\cite[Lemma 5.1.1 (7)]{Nak02c}, but the group variety structure does not extend to $\W$. However, the multiplication by $m$ extends to the whole $\W$.

\begin{lemma}\label{lem-multm}
Given a non-zero integer $m \in \bZ \setminus \{ 0 \}$, the multiplication-by-$m$ map $\W^\# \to \W^\#$ extends to a finite holomorphic map $\bm : \W \to \W$. 
\end{lemma}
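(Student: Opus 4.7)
My plan is to extend multiplication by $m$ fibrewise and then globalise using the closure of the graph, with holomorphicity established via the explicit Weierstra{\ss} formulas.

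First, I would verify that for each $t \in S$ the multiplication-by-$m$ map on $\W_t^\#$ extends uniquely to a finite holomorphic self-map $\mathbf{m}_t: \W_t \to \W_t$ fixing the singular point of $\W_t$ (when one exists). This is immediate from the description supplied in the excerpt: pulling back via the normalisation $\nu: \PP^1 \to \W_t$, the map becomes $x \mapsto x^m$ (nodal case) or $x \mapsto mx$ (cuspidal case). Both fix $\infty$, and in the nodal case the preimage $\{0,\infty\}$ of the node is preserved setwise, so the map descends through $\nu$ to a finite holomorphic self-map of $\W_t$.

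Next, I would assemble these fibrewise extensions into a global object. Let $\Gamma^\# \subset \W^\# \times_S \W^\#$ denote the graph of $[m]: \W^\# \to \W^\#$, and let $\Gamma \subset \W \times_S \W$ be its closure. Since $p: \W \to S$ is projective, the fibre product is a complex space and $\Gamma$ is a closed analytic subset. By the first step, for each $t \in S$ the fibre $\Gamma_t$ coincides with the graph of $\mathbf{m}_t$, so the first projection $p_1: \Gamma \to \W$ is proper and bijective. If $p_1$ is a biholomorphism then $\mathbf{m} := p_2 \circ p_1^{-1}$ is the desired finite holomorphic map.

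The delicate point, which I expect to be the main obstacle, is promoting $p_1$ to a biholomorphism. One cannot simply invoke normality of $\W$, since cuspidal fibres can contribute non-seminormal points. I would proceed via the classical division polynomials $\phi_m$, $\omega_m$, $\psi_m$: viewed as polynomials in the affine Weierstra{\ss} coordinates with coefficients in $\alpha,\beta$, they homogenise, after the appropriate twist by powers of $\sL$, into global sections on $\PP(\sO_S \oplus \sL^2 \oplus \sL^3)$ and define a meromorphic map $\W \dashrightarrow \W$ over $S$ whose restriction to $\W^\#$ is $[m]$. Its base locus is a closed analytic subset of $\W$ meeting each fibre in finitely many points. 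A local comparison in Weierstra{\ss} coordinates at each singular point with the fibrewise extension of the first step forces this base locus to be empty, so $\mathbf{m}: \W \to \W$ is a morphism. Its finiteness then follows from properness over $S$ together with the fibrewise finiteness of the maps $\mathbf{m}_t$.
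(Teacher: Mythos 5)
Your reduction to extending the map across $\W \setminus \W^\#$ is the right starting point, but the step that carries the whole content of the lemma is asserted rather than proved, in both of your formulations. For the graph closure $\Gamma$, the identity ``$\Gamma_t$ coincides with the graph of $\mathbf{m}_t$'' does not follow from the fibrewise extension: $\Gamma_t$ contains not only limits of $(x_n,[m]x_n)$ with $x_n \in \W_t^\#$, but also limits with $x_n$ lying in \emph{nearby smooth fibres} converging to a singular point of $\W_t$, and it is precisely the uniqueness of these transverse limits that must be established (otherwise $p_1$ has a positive-dimensional fibre and is not bijective). The same issue reappears in the division-polynomial version: at a cusp (say $\ga=\gb=0$, $m=2$) one has $\phi_2 = x^4$ and $\psi_2^2 = 4y^2$, so the homogenised sections all vanish there, the naive base locus is nonempty, and one must show that the local ideal they generate in $\cO_{\W,x}$ becomes principal. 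A ``local comparison with the fibrewise extension'' cannot force this, because the fibrewise extension only controls the limit along the central fibre, not along directions transverse to it; so the delicate point you correctly identify is left unresolved.

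Your reason for abandoning the normality argument is also mistaken, and this is where the paper's proof actually lives. Non-normality (indeed non-seminormality) of the cuspidal \emph{fibres} is irrelevant: what matters is normality of the \emph{total space} $\W$, which holds because $\W$ is a hypersurface in a $\PP^2$-bundle (hence Cohen--Macaulay) whose singular locus is contained in the set of fibre singularities, a set of codimension $\ge 2$. The paper's argument is then short: the explicit description of multiplication by $m$ near the singular point of a fibre (via the normalisation $\nu:\PP^1\to\W_t$, where $\infty$ is sent near $\infty$) shows that the image of a neighbourhood of $(\W\setminus\W^\#)\cap\W_U$ has closure which is again such a neighbourhood, i.e.\ the map is locally bounded near the codimension-$\ge 2$ set $\W\setminus\W^\#$; the Riemann extension theorem on the normal space $\W$ then extends it holomorphically, and finiteness follows since the extension preserves the fibres of $p$ and is finite on $\W^\#$. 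If you want to keep your graph-closure or division-polynomial framework, you would still need to supply exactly this boundedness-plus-normality input (or an explicit principality computation at the singular points), so as written the proposal has a genuine gap.
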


\begin{proof}
Fix a point $x \in \W \bss \W^\#$. Locally around the fiber $p^{-1}(p(x))$, there exists a polydisc $U \subset \W$ containing $x$ such that $\W$ is a hypersurface of $\PP^2 \times U$ and that each fiber of $\W_{U} \colonec p^{-1}(U) \to U$ is a cubic curve in $\PP^2$. 
The description of the multiplication given above shows 
that  the closure in $\W_{U}$ of the image of a neighborhood of $(\W \bss \W^\#) \cap \W_{U}$ in $\W_{U}$ is still a neighborhood of $(\W \bss \W^\#) \cap \W_{U}$. As $\W$ is normal, we can thus apply the Riemann extension theorem~\cite[p. 144]{GR84} to conclude that the multiplication-by-$m$ map $\W^\# \to \W^\#$  extends to a holomorphic map $m : \W \to \W$. As $m_{|\W^\#}$ preserves fibers of $\W^\# \to S$ and is finite, the extension $m : \W \to \W$ is finite. 
\end{proof}

\begin{remark}
In~\cite[p. 550]{Nak02c}, the sheaf $\mathcal{J}(H)^\mathbb{W}$ is first \emph{defined} to be the germs of holomorphic sections of $p : \W \to S$, then one proves that $\mathcal{J}(H)^\mathbb{W}$ sits inside the exact sequence~\eqref{exseq-JW}. However with this definition of $\mathcal{J}(H)^\mathbb{W}$, the exactness of~\eqref{exseq-JW} fails as it follows from the false claim that local sections of $p$ are contained in $\W^\#$. Indeed, the Weierstra{\ss} fibration  parameterized by $\ga \in \bC$ defined by $Y^2Z = X^3 + \ga X$ has a section $\ga \mapsto (X(\ga) = 0, Y(\ga) = 0)$ which passes through the cusp of the singular central fiber. 
 
In order to keep the sequence~\eqref{exseq-JW} exact, the correct definition of $\mathcal{J}(H)^\mathbb{W}$ should be the sheaf of germs of holomorphic sections of $\W \to S$ whose image is \emph{contained in $\W^\#$}. In this way, as already mentioned in~\cite[p. 550]{Nak02c} since $\W^\#$ acts on $\W$ by translations~\cite[Lemma 5.1.1 (7)]{Nak02c}, a local section of  $\mathcal{J}(H)^\mathbb{W}$ gives rise to a local automorphism of $\W$.
\end{remark}

We can associate an elliptic fibration to a cohomology class $\eta\in H^1(S,\mathcal{J}(H)^\mathbb{W})$~\cite[p.550]{Nak02c}: 

\begin{construction} \label{constructelliptic}
Fix an open cover $(U_j)_{j \in \N}$ of $S$ such that the class $\eta$ is represented by a cocycle $(\eta_{ij})_{i<j}$ where $\eta_{ij}\in H^0(U_i \cap U_j,\mathcal{J}(H)^\mathbb{W})$.
By the remark above, with the choice of a zero-section $U_i \to \W^\#|_{U_i}$ for each $i$, we can identify the $\eta_{ij}$ to automorphisms of $\mathbb \W_{ij} \colonec \mathbb \W|_{U_i \cap U_j}$ over $S$. The cocycle condition assures that the condition
of the gluing lemma \cite[Chapter II, Exercise 2.12]{Har77} is satisfied in our situation, so we can glue the elliptic fibrations $\mathbb \W_{i} \colonec \mathbb{W}_{|U_i} \rightarrow U_i$ to an elliptic fibration $p^\eta: \mathbb{W}^\eta \rightarrow S$. Since the gluing morphisms are translations so act as the identity on the \textsc{vhs},
the \textsc{vhs} induced by $p^\eta$ on $S^\star$ is $H$. This construction is independent of the choices of $(U_i)$ and the zero-sections $U_i \to \W^\#|_{U_i}$
\end{construction}

According to the above construction, given $p^\eta: \mathbb{W}^\eta \rightarrow S$ and an open cover $(U_i)$ of $S$ as above, the multiplication-by-$m$'s on $ \W|_{U_i} \to U_i$ defined in Lemma~\ref{lem-multm} glue together to a global morphism $\bm : \mathbb{W}^\eta \to \mathbb{W}^{m\eta}$ over $S$, which up to isomorphisms is independent of the choices of $(U_i)$ and the zero-sections $U_i \to \W^\#|_{U_i}$.

Now given a locally Weierstrass fibration $f : X  \to S$ constructed by~\ref{constructelliptic}, we shall explain how to find $\eta$ to which $f$ associates.
Consider the long exact sequence
 \begin{equation}\label{sel}
\begin{tikzcd}[cramped, row sep = 4, column sep = 20]
\cdots \arrow[r] & R^1f_*\bZ_X \arrow[r] & R^1f_*\cO_X \arrow[r] & R^1f_*\cO_X^* \arrow[r] & R^2f_*\bZ_X \arrow[r]  & \cdots.  \\ 
\end{tikzcd}
\end{equation}
Since $X \to S$ is obtained by gluing the pieces $\mathbb{W}_i \rightarrow U_i$ by translation maps $\tau_{ij} : \mathbb{W}_{ij} \to \mathbb{W}_{ij}$, which act trivially on $H^1(\mathbb{W}_{ij}, \bZ)$, we have $R^1f_*\bZ_X = R^1p_*\bZ_\W \simeq j_*H$. The translations $\tau_{ij}$ act also trivially on $H^1(\W_s, \cO_{\W_s})$ where $\W_s \colonec p^{-1}(s)$ for any $s \in U_i \cap U_j$. As $p : \W \to S$ is flat and $ H^1(\W_s, \cO_{\W_s}) \simeq\bC$, by Grauert's base change theorem we deduce that  $R^1f_*\cO_X = R^1p_*\cO_\W$, which is isomorphic to $\cL_{H/S}$. Therefore $R^1f_*\bZ_X \to R^1f_*\cO_X$ is identical to the morphism $j_*H \to \cL_{H/S}$ in~\eqref{exseq-JW}.
Finally since the fibers of $f$ are of dimension 1, we have $R^2f_*\cO_X = 0$. Thus~\eqref{sel} becomes
 \begin{equation}\label{ex-long}
\begin{tikzcd}[cramped, row sep = 4, column sep = 20]
j_*H \arrow[r] & \cL_{H/S} \arrow[r] &R^1f_*\cO_X^* \arrow[r] & R^2f_*\bZ_X \arrow[r]  & 0.  \\ 
\end{tikzcd}
\end{equation}
Recall that $j_*H \to \cL_{H/S}$ is injective and $\cJ(H)^\W$ sits inside the short exact sequence
 \begin{equation}\label{ex-1}
\begin{tikzcd}[cramped, row sep = 4, column sep = 20]
0 \arrow[r] & j_*H \arrow[r] & \cL_{H/S} \arrow[r] & \cJ(H)^\W  \arrow[r] & 0.  \\ 
\end{tikzcd}
\end{equation}
 As a fiber $F$ of $f$ is either an elliptic curve, a nodal rational curve, or a rational curve with a cusp, we have $H^2(F,\bZ) = \bZ$. Since $p$ is proper, by~\cite[Theorem 6.2]{Ive} $R^2f_*\bZ_X \simeq \bZ_S$. Hence we have a second short exact sequence
 \begin{equation}\label{ex-2}
\begin{tikzcd}[cramped, row sep = 4, column sep = 20]
0 \arrow[r] & \cJ(H)^\W \arrow[r]   & R^1f_*\cO_X^* \arrow[r]   & \bZ_S  \arrow[r] & 0.  \\ 
\end{tikzcd}
\end{equation}
If $\eta \in H^1(S, \cJ(H)^\W)$ denotes the element which defines~\eqref{ex-2}, then $f$ will be the elliptic fibration associated to $\eta$.

For classes in $H^1(S,\mathcal{J}(H)^\mathbb{W})$ the deformation theory is analogous to the smooth case:

\begin{proposition}\label{prop:deformation loc trivial}
Assuming~\ref{assume}. Given $\eta \in H^1(S,\mathcal{J}(H)^\mathbb{W})$, there exists a locally trivial (cf. Definition \ref{definitionlocallytrivial}) family of elliptic fibrations $\pi:\sX\To S\times V$ over $S$ parameterised by $V \colonec H^1(S,\mathcal{L})$ satisfies the following property: an elliptic fibration $X \to S$ is a member of $\pi$ if and only if $X$ is isomorphic to $W^\theta \to S$ over $S$ for some $\theta \in H^1(S,\mathcal{J}(H)^\mathbb{W})$ such that $\mathtt{c}(\eta)=\mathtt{c}(\theta)$.
\end{proposition}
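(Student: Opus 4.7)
The approach is to extend the strategy of Theorem~\ref{theoremsmoothdeformation} to the possibly singular setting, using the exact sequence~\eqref{exseq-JW} in place of~\eqref{eq:exp sequence smooth} and applying Construction~\ref{constructelliptic} over the base $S \times V$ rather than $S$. Let $\mathrm{pr}\colon S \times V \to S$ be the first projection and pull back the \textsc{vhs} $H$ together with~\eqref{exseq-JW} to $S \times V$, yielding sheaves $\mathrm{pr}^* \cL$ and $\cJ(\mathrm{pr}^* H)^\W$. Via the K\"unneth decomposition, the identity $V \to H^1(S, \cL)$ defines a tautological class
$$
\xi \in H^1(S, \cL) \otimes V^* \subset H^1(S \times V, \mathrm{pr}^* \cL),
$$
and I set $\tilde\eta \colonec \mathrm{pr}^* \eta + \exp(\xi) \in H^1(S \times V, \cJ(\mathrm{pr}^* H)^\W)$. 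Applying Construction~\ref{constructelliptic} to $\tilde\eta$ produces the sought elliptic fibration $\pi\colon \sX \colonec \W^{\tilde\eta} \to S \times V$; by construction, the restriction of $\tilde\eta$ to the slice $S \times \{t\}$ equals $\eta + \exp(t)$, so the fiber $\pi^{-1}(S \times \{t\}) \to S$ is isomorphic to $\W^{\eta + \exp(t)} \to S$.

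For local triviality, choose an open cover $(U_i)_{i \in I}$ of $S$ by Stein open sets small enough that $H^1(U_i, \cL|_{U_i}) = 0$ (such a cover exists because $\cL$ is coherent). Then $\xi|_{U_i \times V}$ vanishes by K\"unneth, so $\tilde\eta|_{U_i \times V} = \mathrm{pr}^*(\eta|_{U_i})$, and the base-change compatibility of Construction~\ref{constructelliptic} gives identifications $\pi^{-1}(U_i \times V) \simeq \W^\eta|_{U_i} \times V$ over $U_i \times V$, fulfilling Definition~\ref{definitionlocallytrivial} after restricting $V$ to a polydisc around the origin. The characterisation of the members follows from the exactness of
$$
H^1(S, \cL) \xrightarrow{\exp} H^1(S, \cJ(H)^\W) \xrightarrow{\mathtt{c}} H^2(S, j_*H),
$$
which shows both that $\mathtt{c}(\eta + \exp(t)) = \mathtt{c}(\eta)$ for every $t \in V$, and conversely that any $\theta$ with $\mathtt{c}(\theta) = \mathtt{c}(\eta)$ can be written as $\eta + \exp(t)$ for some $t \in V$, so that $\W^\theta \to S$ is a fiber of $\pi$.

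The main technical point is verifying that Construction~\ref{constructelliptic} is genuinely compatible with relative base change to $S \times V$. Concretely, one has to check that a \v{C}ech representative of $\tilde\eta$ on the cover $(U_i \times V)_i$ produces well-defined translation automorphisms of the pulled-back Weierstra\ss{} fibrations $\W|_{U_i \cap U_j} \times V$. This should be essentially automatic from the description of $\W^\#$ as a complex analytic group variety over $S$ whose translations act holomorphically both in the base variable and in the translation parameter; in particular, the gluing construction makes sense with any complex manifold base, $V$ included.
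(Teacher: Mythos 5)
Your proposal is correct and follows essentially the same route as the paper: both define the tautological class $\xi$ corresponding to $\mathrm{id}_V$, apply Construction~\ref{constructelliptic} to $\exp(\xi)+\pr^*\eta$ over $S\times V$, read off the fibres over slices $S\times\{t\}$ as $\W^{\eta+\exp(t)}$, and deduce the membership criterion from the exact sequence $H^1(S,\mathcal L)\to H^1(S,\mathcal J(H)^\W)\to H^2(S,j_*H)$. The only cosmetic difference is in the local triviality step, where you invoke vanishing of $H^1(U_i,\mathcal L)$ on Stein sets to kill $\xi$ locally, while the paper simply observes that since $V$ is contractible the construction can be run on a product cover $\{U_i\times V\}$, making the pieces being glued products by definition; both arguments are fine.
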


\begin{proof}
As in the smooth case, let $$\xi \in H^1(S ,\mathcal{L}) \otimes H^0(V,\mathcal{O}_{V}) \subset H^1(S \times V,\mathcal{L}_{\pr^*H/{S \times V}})$$ 
be the element which corresponding to the identity map $V \to H^1(S ,\mathcal{L}_{H/S})$ where $\pr : S \times V \to S$ denotes the projection onto the first factor. Let $\pi:\sX \to S\times V$ be the elliptic fibration obtained by Construction~\ref{constructelliptic} from $\exp(\xi) + \pr^*\eta \in  H^1(S \times V,\mathcal{J}(\pr^*H)^\W)$. Then considering $\pi$ as a family of elliptic fibrations over $S$ parameterised by $V$, the fiber over $t\in H^1(S,\mathcal{L}_{H/S})$ is the elliptic fibration constructed by~\ref{constructelliptic} from $\exp(t) + \eta \in H^1(S,\mathcal{J}(H))$. Thus $\pi$ satisfies the desiring property. As $V$ is contractible, in order to construct $\pi:\sX \to S\times V$, it is possible to take the open cover of $S \times V$ in Construction~\ref{constructelliptic} to be $\{U_i \times V \}$ for some open cover $\{U_i\}$ of $S$. Thus $\pi:\sX \to S\times V$ is locally trivial.
\end{proof}

The cohomology group $H^1(S,\mathcal{J}(H)^\mathbb{W})$ is a parameter set of elliptic fibrations over $S$ with \textsc{vhs} $H$, but for classification purposes it is too small. We denote by $\mathcal{J}(H)_{mer}$ the sheaf of {\em meromorphic} sections of a basic
elliptic fibration $p: \mathbf{B}(H)\to S$. Since two basic elliptic fibrations
are bimeromorphically equivalent the sheaf $\mathcal{J}(H)_{mer}$ does not depend on the choice of the model.
Moreover, since $p$ has a global meromorphic section, we see that
$\mathcal{J}(H)_{mer}$ has a group structure \cite[p. 243-244]{NakLocal}. There is a trivial inclusion of sheaves of abelian groups
\begin{equation}
\mathcal{J}(H)^\mathbb{W} \subset \mathcal{J}(H)_{mer} 
\end{equation}
which is an isomorphism on $S^\star$: since $\mathbb W$ is smooth over $S^\star$ we have
$\mathbf J(H) \simeq \mathbf B(H)|_{S^\star} \simeq \mathbb W|_{S^\star}$, moreover any
meromorphic section is holomorphic over $S^\star$ \cite[Lemma 1.3.5]{NakLocal}.
In particular the quotient sheaf
$$
\mathcal{Q}_H:=\mathcal{J}(H)_{mer}/\mathcal{J}(H)^\mathbb{W}
$$
is supported on $D=S\setminus S^\star$. By \cite[Theorem 5.4.9]{Nak02c} we have
a commutative diagram
\begin{equation}\label{eq:diagram}
\xymatrix{&&&0\ar[d]&&& \\
0\ar[r] & j_*H\ar[r] &\mathcal{L}_{H/S} \ar[r] & \mathcal{J}(H)^\mathbb{W}\ar[d]\ar[r]&0&&\\
&&0\ar[r] & \mathcal{J}(H)_{mer} \ar[r]^{\Psi_f}\ar[d] & R^1f_*\mathcal{O}^*_X/\mathcal{V}_X\ar[r]\ar[d] & \Z_S\ar[r]\ar[d] & 0\\
&&0\ar[r] & \mathcal{Q}_H\ar[r]\ar[d] & R^2f_*\Z_X/\mathcal{V}_X\ar[r] & \Z_S\ar[r] & 0\\
&&&0&&&
}
\end{equation}
where
$$\mathcal{V}_X:=\mathrm{Ker} \left( R^1f_*\mathcal{O}^*_X\To j_*((R^1f_*\mathcal{O}^*_X)_{\mid S^\star})\right)$$
and $\Psi_f$ is constructed from the local meromorphic sections of $f$.

\begin{definition}\label{defi:eta class general}
We define 
$$
\eta(f)\in H^1(S,\mathcal{J}(H)_{mer})
$$
to be the image of $1\in H^0(S,\Z_S)$ under the connecting morphism of the long exact sequence associated to the second line of Diagram (\ref{eq:diagram}).
\end{definition}
By \cite[Proposition 5.5.1]{Nak02c} we have an injection
$$
\mathcal E_0(S, D, H) \hookrightarrow H^1(S,\mathcal{J}(H)_{mer}),
$$
where $\mathcal E_0(S, D, H)$ is the set of bimeromorphic equivalence classes of elliptic fibrations $f : X \rightarrow S$
having meromorphic sections over every point of $S$
and such that $f^{-1}(S^\star) \to S^\star$ is bimeromorphic to a smooth elliptic fibration over $S^\star$ inducing the \textsc{vhs} $H$. By Construction \ref{constructelliptic} we have
$$
H^1(S,\mathcal{J}(H)^\mathbb{W}) \rightarrow \mathcal E_0(S, D, H) \hookrightarrow H^1(S,\mathcal{J}(H)_{mer})
$$
but contrary to the smooth case it is not clear if the images coincide. 
If $S$ is a curve, the skyscraper sheaf $\mathcal{Q}_H$ has no higher cohomology
so the map
$$
H^1(S,\mathcal{J}(H)^\mathbb{W})\To H^1(S,\mathcal{J}(H)_{mer})
$$
is surjective. 

If $\eta(f) \in H^1(S,\mathcal{J}(H)_{mer})$ is the image of some $\eta \in H^1(S,\mathcal{J}(H)^\mathbb{W})$, then there is a morphism of short exact sequences 
\begin{equation}\label{diag-p}
\begin{tikzcd}[cramped, row sep = 20, column sep = 20]
0 \arrow[r] & \cJ(H)^\W \arrow[r]  \ar[d] & R^1p_*\cO_{\W}^* \arrow[r]  \ar[d] & \bZ_S  \arrow[r]  \arrow[d, "\wr"] & 0  \\ 
0 \arrow[r] &  \cJ(H)_{mer} \arrow[r] & R^1f_*\cO_X^*/\cV_X \arrow[r] &  \bZ_S  \arrow[r] & 0  \\ 
\end{tikzcd}
\end{equation}
where the first row is the short exact sequence~\eqref{ex-2} defined by $p^\eta : \W^\eta \to S$.

\subsubsection{The K\"ahler case}

From now on we will focus on the case where the total space of the elliptic fibration $f$ is compact K\"ahler. In that case, the element  $\eta(f)\in H^1(S,\mathcal{J}(H)_{mer})$ represented by $f$ lies in the image of $H^1(S,\mathcal{J}(H)^\mathbb{W})$, up to replacing $\eta(f)$ by a larger multiple
(cf. \cite[Proposition 7.4.2]{Nak02c} for a more general statement).

\begin{lemma}\label{lem:Kahler implies virtually weierstrass}
In the situation of Assumption \ref{assume}, suppose also that $X$ is bimeromorphic to a compact K\"ahler manifold.
Then the image of the class $\eta(f)\in H^1(S,\mathcal{J}(H)_{mer})$ is torsion in $H^1(S,\mathcal{Q}_H)$. In particular there exists an integer $m \ge1$ such that
$$
m\cdot \eta(f)\in H^1(S,\mathcal{J}(H)^\mathbb{W}).
$$
\end{lemma}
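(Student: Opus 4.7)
The plan is to identify the image of $\eta(f)$ in $H^1(S,\mathcal{Q}_H)$ with a concrete obstruction class via diagram~(\ref{eq:diagram}), and kill it modulo torsion by exhibiting a real lift coming from the K\"ahler class of $X$.

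First I would reduce to the case where $X$ itself is compact K\"ahler. Since $X$ is bimeromorphic to a compact K\"ahler manifold, a smooth K\"ahler resolution $X'\to X$ yields a fibration $f':X'\to S$ still satisfying Assumption~\ref{assume} (the local meromorphic sections of $f$ pull back), and defining the same bimeromorphic equivalence class in $\mathcal{E}_0(S,D,H)$ as $f$. By \cite[Proposition~5.5.1]{Nak02c} we then have $\eta(f)=\eta(f')$ in $H^1(S,\mathcal{J}(H)_{mer})$, so we may assume $X$ is compact K\"ahler; in particular $S=f(X)$ is compact.

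By naturality of the connecting homomorphism applied to the morphism from the middle to the bottom row of (\ref{eq:diagram}) (whose rightmost vertical arrow $\mathbb{Z}_S\to\mathbb{Z}_S$ is the identity), the image of $\eta(f)=\delta(1)$ under $H^1(S,\mathcal{J}(H)_{mer})\to H^1(S,\mathcal{Q}_H)$ equals $\delta_3(1)$, where
$$
\delta_3 : H^0(S,\mathbb{Z}_S) \To H^1(S,\mathcal{Q}_H)
$$
is the connecting map of $0 \to \mathcal{Q}_H \to R^2 f_*\mathbb{Z}_X/\mathcal{V}_X \to \mathbb{Z}_S \to 0$. Since $\mathcal{Q}_H$ is supported on the compact analytic divisor $D$, the group $H^1(S,\mathcal{Q}_H)$ is finitely generated, so it suffices to show that $\delta_3(1)_{\mathbb{R}}=0$.

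For this, let $\omega$ be a K\"ahler form on $X$ and consider the image of $[\omega]\in H^2(X,\mathbb{R})$ under the edge morphism $H^2(X,\mathbb{R})\to H^0(S,R^2 f_*\mathbb{R}_X)$ of the Leray spectral sequence. On $S^\star$ the sheaf $R^2 f_*\mathbb{R}_X$ is canonically $\mathbb{R}_{S^\star}$ and this image is the positive constant $c=\int_{X_s}\omega$. Postcomposing with the surjection $R^2 f_*\mathbb{R}_X/(\mathcal{V}_X\otimes\mathbb{R})\to\mathbb{R}_S$ from the bottom row of (\ref{eq:diagram}) produces a section of $\mathbb{R}_S$ equal to $c$ on the dense open $S^\star$, and hence equal to $c$ on all of connected $S$. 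Thus $c\cdot 1$ lifts through the quotient, forcing $c\cdot\delta_3(1)_{\mathbb{R}}=0$ and so $\delta_3(1)$ is torsion. The exact sequence
$$
H^1(S,\mathcal{J}(H)^{\mathbb{W}}) \To H^1(S,\mathcal{J}(H)_{mer}) \To H^1(S,\mathcal{Q}_H)
$$
then gives an integer $m\ge 1$ with $m\cdot\eta(f)$ in the image of $H^1(S,\mathcal{J}(H)^{\mathbb{W}})$, as required.

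The most delicate step is the identification of the composite $R^2 f_*\mathbb{R}_X \to R^2 f_*\mathbb{R}_X/(\mathcal{V}_X\otimes\mathbb{R}) \to \mathbb{R}_S$ with fiberwise integration along $D$, where $R^2 f_*\mathbb{R}_X$ need not be constant; over $S^\star$ this identification is tautological, but extending it across the discriminant requires invoking the geometric meaning of $\mathcal{V}_X$ encoded in Nakayama's construction of diagram~(\ref{eq:diagram}).
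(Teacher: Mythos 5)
Your proof follows essentially the same route as the paper's: reduce to $X$ compact K\"ahler using bimeromorphic invariance of $\eta(f)$, identify the image of $\eta(f)$ in $H^1(S,\mathcal{Q}_H)$ with the connecting map applied to $1\in H^0(S,\Z_S)$ via the bottom row of diagram~(\ref{eq:diagram}), and kill a multiple of that class by producing a degree-$c$ section of $R^2f_*(-)/\mathcal{V}_X$ from the K\"ahler class. The one place you diverge is in using the real class $[\omega]$ directly: this forces you to pass from vanishing in $H^1(S,\mathcal{Q}_H\otimes\R)$ back to torsion in $H^1(S,\mathcal{Q}_H)$, and your stated justification (finite generation of $H^1(S,\mathcal{Q}_H)$, which is neither obvious nor quite the point, since the issue is comparing $H^1(S,\mathcal{Q}_H)\otimes\R$ with $H^1(S,\mathcal{Q}_H\otimes\R)$) leaves a small hole; it can be patched on a compact base by writing $\Q$ and $\R$ as filtered colimits, but the paper sidesteps it entirely by perturbing $\omega$ to a rational class $\alpha\in H^2(X,\Q)$ with $\alpha\cdot F\neq 0$ and clearing denominators, so that the whole argument stays in integral coefficients. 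With that adjustment your argument coincides with the paper's; the final identification of the map to $\Z_S$ with fibrewise degree, which you rightly flag, is treated as straightforward in the paper as well.
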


\begin{proof}
Since the class $\eta(f)$ depends only on the bimeromorphic equivalence
class of $X \rightarrow S$ we can suppose that $X$ is a compact K\"ahler manifold. Indeed, let $X' \dto X$ be a bimeromorphic map from a compact K\"ahler manifold $X'$ and let $\tilde{X'} \to X$ be a resolution of $X' \dto X$ by successively blowing-up $X'$ along smooth subvarieties. Then $\tilde{X'}$ is a K\"ahler manifold and $\tilde{X'} \to S$, which is the composition of $\tilde{X'} \to X$ with $X \rightarrow S$, is an elliptic fibration bimeromorphic to $X$. So we may replace $X$ by $\tilde{X'}$ for instance.

Let $\omega \in H^2(X,\R)$ be a K\"ahler class on $X$. By density, there exists a class $\alpha\in H^2(X,\Q)$
(in general not of type $(1,1)$) such that $\alpha\cdot F\neq 0$ where $F$ is a general $f$-fibre. The class $\alpha$ defines a global section of $R^2f_*\Q_X$ and we can cancel the denominators in such a way that $\alpha$ defines a section of $R^2f_*\Z_X$ and thus a non-zero element $\bar \alpha \in H^0(S, R^2f_*\Z_X/\mathcal{V}_X)$. 
The third line of \eqref{eq:diagram} induces an exact sequence
$$
H^0(S, R^2f_*\Z_X/\mathcal{V}_X) \stackrel{\tau}{\rightarrow} H^0(S, \Z_S) \stackrel{\delta}{\rightarrow} H^1(S,\mathcal{Q}_H)
$$
and is straightforward to check that $\tau(\bar \alpha)=F\cdot \alpha$. It is then a positive multiple of the class $1\in H^0(S,\Z_S)$ and it follows that $\delta(1)$ is a torsion class in $H^1(S,\mathcal{Q}_H)$. A diagram chase in \eqref{eq:diagram} shows that $\delta(1)$ is the image of $\eta(f)$ in $H^1(S,\mathcal{Q}_H)$.
\end{proof}

We can now generalise Theorem \ref{theoremsmoothdeformation} (cf. also \cite[Proposition 7.4.2]{Nak02c}):

\begin{theorem}\label{th:c torsion general}
In the situation of Assumption \ref{assume}, let us also assume that $X$ is bimeromorphic to a compact K\"ahler manifold.
Suppose also that $\eta(f)$ is in the image of $H^1(S,\mathcal{J}(H)^\mathbb{W})$.
Denote by 
$$
\mathtt{c}: H^1(S,\mathcal{J}(H)^\mathbb{W}) \rightarrow H^2(S, j_* H)
$$
the morphism defined by the first line of the exact sequence \eqref{eq:diagram}.
Then the class $\mathtt{c}(\eta(f))$ is torsion in $H^2(S,j_*H)$.
\end{theorem}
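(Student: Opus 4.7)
The plan is to adapt the argument of Theorem~\ref{theoremsmoothdeformation}(b) to the global setting on $S$, using the Leray spectral sequence of $f$ together with a rational class on $X$ produced from a K\"ahler class. First I would replace $X$ by a smooth compact K\"ahler bimeromorphic model, as in the opening of the proof of Lemma~\ref{lem:Kahler implies virtually weierstrass}, since $\mathtt{c}(\tilde\eta)$ only depends on the bimeromorphic equivalence class of $f$ over $S$. Then, by density of rational $(1,1)$-classes in $H^{1,1}(X,\R)$ applied to a K\"ahler class and by the Lefschetz $(1,1)$-theorem, I obtain a line bundle $L \in \Pic(X)$ with $d \colonec c_1(L) \cdot F \neq 0$ for a general fibre $F$.

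The core of the argument is the Leray spectral sequence $E_2^{p,q} = H^p(S, R^q f_* \Q_X)$. Theorem~\ref{thm-van} gives $R^2 f_* \cO_X = 0$, and after possibly passing to a finite base change to enforce unipotent monodromies (as in Proposition~\ref{prop:local meromorphic section}), Nakayama's analysis \cite{NakLocal} identifies $R^1 f_* \Q_X \simeq j_*H \otimes \Q$. The fibre fundamental class provides a split inclusion $\Q_S \hookrightarrow R^2 f_* \Q_X$, yielding a decomposition $R^2 f_* \Q_X = \Q_S \oplus \mathcal{T}$ with $\mathcal{T}$ supported on $D$. Comparing the Leray differential with the connecting morphisms in the pushed-forward exponential sequence on $X$ identifies the restriction of $d_2 : H^0(S, R^2 f_* \Q_X) \to H^2(S, R^1 f_* \Q_X)$ to the $\Q_S$-summand with the morphism $H^0(S, \Q_S) \to H^2(S, j_*H_\Q)$ sending $1 \mapsto \mathtt{c}(\tilde{\eta})$.

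The local technical point is that $H^0(S, \mathcal{T})$ is rationally generated by the images of the Chern classes $[E] \in H^2(X,\Q)$ of irreducible components $E$ of fibres over $D$: at each $s \in D$ with fibre $F_s = \sum m_i E_i$, the intersection matrix $(E_i \cdot E_j)$ is negative semi-definite with one-dimensional kernel spanned by $[F_s]$ by Kodaira's classification, so the restrictions of the $[E_i]$ span exactly the trace-zero subspace $\mathcal{T}_s$. Since each such class lifts from $H^2(X,\Q)$, the Leray differential $d_2$ vanishes on $H^0(S, \mathcal{T}) \otimes \Q$. The edge-morphism image of $c_1(L)$ in $H^0(S, R^2 f_* \Q_X)$ decomposes as $d \cdot 1_{\Q_S} + \kappa$ with $\kappa \in H^0(S, \mathcal{T})$; as $c_1(L)$ lifts from $X$, $d_2(d \cdot 1_{\Q_S} + \kappa) = 0$, and vanishing on $\mathcal{T}$ then forces $d \cdot \mathtt{c}(\tilde{\eta}) = 0$ in $H^2(S, j_*H_\Q)$, so $\mathtt{c}(\tilde{\eta})$ is torsion in $H^2(S, j_*H)$.

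The hard part will be the identification of $d_2|_{\Q_S}$ with $\mathtt{c}(\tilde{\eta})$ in the second paragraph: it requires a careful compatibility between the Leray spectral sequence of $Rf_*\Z_X$ and the connecting morphisms in diagram~\eqref{eq:diagram}, ultimately resting on Nakayama's detailed analysis of the sheaves $R^q f_* \cO_X$ and $R^q f_* \cO^*_X$ for elliptic fibrations with normal crossing discriminant \cite{NakLocal, Nak02c}.
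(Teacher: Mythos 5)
Your skeleton --- identify $\mathtt{c}(\eta(f))$ with a Leray differential $d_2$ applied to $1\in H^0(S,\Z_S)$ and kill it with a rational degree-two class pairing non-trivially with the fibre --- is indeed the skeleton of the paper's proof. But two of your steps have genuine problems. First, the class you feed into the spectral sequence cannot be produced as you describe: rational classes are dense in $H^2(X,\R)$, not in $H^{1,1}(X,\R)$, and for a non-projective compact K\"ahler $X$ the N\'eron--Severi group can be far too small. In fact, if a line bundle $L$ with $c_1(L)\cdot F\neq 0$ existed, $f$ would be a projective morphism over $S$, which is exactly what one cannot assume here (non-projective two-tori elliptically fibred over an elliptic curve already give counterexamples). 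The paper (Lemma \ref{lem:Kahler implies virtually weierstrass}) instead takes $\alpha\in H^2(X,\Q)$ close to a K\"ahler class, explicitly \emph{not} of type $(1,1)$. Since your argument only uses the image of the class under the edge morphism, this substitution repairs the step, but as written it is an error.

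Second, and more seriously, your treatment of the correction term $\kappa\in H^0(S,\mathcal{T})$ only works when $\dim S=1$. Over a higher-dimensional base the smooth model $X$ has reducible and even positive-dimensional fibres over the deeper strata of $D$, so $\mathcal{T}$ is a constructible sheaf on $D$ whose stalks at such points contain all of $\ker\big(H^2(X_s,\Q)\to H^2(F,\Q)\big)$ --- including transcendental classes of surface fibres --- and there is no reason for $H^0(S,\mathcal{T})$ to be spanned by classes of divisorial fibre components; the negative semi-definiteness argument you invoke is Zariski's lemma for fibred surfaces and has no analogue in this generality. This is also why the identification $d_2(1)=\mathtt{c}(\eta(f))$, which you rightly flag as the hard part, is delicate on the smooth model: diagram \eqref{eq:diagram} quotients by $\mathcal{V}_X$ precisely to absorb these extra components. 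The paper sidesteps the whole issue by running the Leray argument on the locally Weierstra{\ss} model $p^\eta:\W^\eta\to S$ representing a preimage $\eta$ of $\eta(f)$: there every fibre is an irreducible plane cubic, so $R^2p^\eta_*\Z\simeq\Z_S$ and $R^1p^\eta_*\Z\simeq j_*H$ on the nose, $\mathcal{T}=0$, and \cite[Lemma 8.1]{Lin16} gives $d_2(1)=\mathtt{c}(\eta)$ directly; the price is proving surjectivity of the edge map $H^2(\W^\eta,\R)\to H^0(S,R^2p^\eta_*\R)$ on a singular space, which is done via Bott--Chern cohomology and a K\"ahler resolution. To salvage your route you would have to prove that $d_2$ vanishes on the relevant part of $H^0(S,\mathcal{T})$ in arbitrary base dimension, which is precisely the content that is missing.
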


\begin{proof} 
Let $\eta \in H^1(S, \cJ(H)^\W)$ be an element which maps to $\eta(f) \in H^1(S,\mathcal{J}(H)_{mer})$ and let $p^\eta: W \colonec \W^\eta \to S$ be the minimal locally Weierstrass fibration which represents $\eta \in H^1(S, \cJ(H)^\W)$. By~\cite[Lemma 8.1]{Lin16}, the following diagram commutes
\begin{equation*}
\begin{tikzcd}[cramped, row sep = 20, column sep = 20]
H^0(S,R^2p^\eta_*\bZ) \simeq H^0(S,\bZ) \arrow[r, "\ga"] \arrow[bend left=10, rr,  "d_2"]   & H^1(S, \cJ(H)^\W )    \arrow[r, "\mathtt{c}"] & H^2(S,R^1p^\eta_*\bZ) \simeq H^2(S,j_*H)  \\ 
\end{tikzcd}
\end{equation*}
where  $\mathtt{c}$ and $\ga$ are the connecting morphisms in the long exact sequences induced by~\eqref{ex-1} and~\eqref{ex-2} respectively. As $\ga(1) = \eta$, it suffices to prove the following lemma, which implies that $d_2 \otimes \bR = 0$. 
\end{proof}

\begin{lemma}
 $H^2(W, \bR) \to H^0(S,R^2p^\eta_*\bR)$ is surjective.
\end{lemma}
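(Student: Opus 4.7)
The plan is to exhibit a single class $\omega \in H^2(W,\bR)$ whose image under the edge map is non-zero; working component by component, we may assume $S$ is connected. Since every fiber of $p^\eta : W \to S$ is either a smooth elliptic curve or a nodal/cuspidal cubic plane curve, each with $H^2$ of rank one, the same argument that yields~\eqref{ex-long} gives $R^2 p^\eta_* \bR \simeq \bR_S$. Hence $H^0(S, R^2 p^\eta_* \bR) \simeq \bR$, and the edge map sends $\omega$ to the locally constant function $s \mapsto \int_{(p^\eta)^{-1}(s)} \omega$; producing any $\omega$ with positive fiberwise integral is therefore enough.

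To construct such an $\omega$ we use that $W$ is bimeromorphic to a compact K\"ahler manifold. Indeed, $\eta$ maps to $\eta(f) \in H^1(S, \cJ(H)_{mer})$, and since the bimeromorphic equivalence class of an elliptic fibration is determined by its image under $\mathcal E_0(S, D, H) \hookrightarrow H^1(S, \cJ(H)_{mer})$, the locally Weierstra{\ss} model $W = \W^\eta$ represents the same bimeromorphic class over $S$ as $f : X \to S$; by assumption $X$ is bimeromorphic to a compact K\"ahler manifold $X_K$. Resolving the meromorphic map $X_K \dashrightarrow W$ by successive blowups of smooth centers in $X_K$ yields a smooth compact K\"ahler manifold $Z$ together with a bimeromorphic morphism $\mu : Z \to W$. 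Let $\omega_Z$ be a K\"ahler form on $Z$, representing $[\omega_Z] \in H^2(Z,\bR)$. By Remark~\ref{remarkpropertiesweierstrass} the space $W$ has canonical, hence rational, singularities, and in particular lies in Fujiki's class $\cC$; the pushforward of currents $\mu_* \omega_Z$ is then a well-defined closed $(1,1)$-current on $W$ representing a cohomology class $\omega \colonec \mu_*[\omega_Z] \in H^2(W,\bR)$ (alternatively one uses the trace $R\mu_* \bR_Z \to \bR_W$ coming from the BBD decomposition theorem).

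For a general $s \in S$, the fiber $F \colonec (p^\eta)^{-1}(s)$ lies in the smooth locus of $W$, so $\mu$ is an isomorphism over an open neighborhood $V$ of $F$; in particular $\mu^{-1}(F) \simeq F$ is a smooth elliptic curve, and $\mu_* \omega_Z|_V$ corresponds to $\omega_Z|_{\mu^{-1}(V)}$ under the isomorphism $\mu : \mu^{-1}(V) \to V$. Therefore
\[
\int_F \omega \;=\; \int_{\mu^{-1}(F)} \omega_Z \;>\; 0,
\]
since $\omega_Z$ restricts to a K\"ahler form on the elliptic curve $\mu^{-1}(F)$. Hence the image of $\omega$ under the edge map is a non-zero constant, proving surjectivity. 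The main technical point is giving a clean meaning to $\mu_*[\omega_Z] \in H^2(W,\bR)$ for the singular target $W$; once this is settled, the remaining step is the purely local computation above, which works because $\mu$ is generically an isomorphism over $S$.
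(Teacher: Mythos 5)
Your reduction is sound and matches the paper's: since every fibre of $p^\eta$ is an irreducible cubic, $R^2p^\eta_*\bR\simeq\bR_S$ and the edge map is $\omega\mapsto\int_F\omega$, so the lemma amounts to producing one class in $H^2(W,\bR)$ with non-zero fibre degree (equivalently, to $[F]\neq 0$ in $H_2(W,\bR)$); and the paper, too, exploits a compact K\"ahler manifold dominating $W$ bimeromorphically. The gap is exactly the step you flag: there is no natural pushforward $\mu_*\colon H^2(Z,\bR)\to H^2(W,\bR)$ when the target is singular, and neither of your proposed remedies closes it. Poincar\'e duality on $Z$ lets you push $[\omega_Z]$ forward only to $H_{2n-2}(W,\bR)$, and on the singular space $W$ there is no intersection pairing between $H_{2n-2}$ and $H_2$ with which to evaluate the result on $[F]$. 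The current $\mu_*\omega_Z$ is indeed a well-defined closed positive $(1,1)$-current on $W$, but its class naturally lives in the Bott--Chern group $H^{1,1}_{BC}(W)$; promoting it to a class in singular cohomology is precisely the content of the injection $H^{1,1}_{BC}(W)\hookrightarrow H^2(W,\bR)$ of \cite[Injection (3)]{a21}, a non-trivial statement which uses that $W$ is normal with rational singularities --- so this route works, but only by importing the same machinery the paper relies on. As for the ``trace'' $R\mu_*\bR_Z\to\bR_W$: the decomposition theorem exhibits $IC_W$, not $\bR_W[\dim W]$, as a direct summand of $R\mu_*\bQ_Z[\dim W]$, and the natural maps run $\bQ_W[\dim W]\to IC_W$ and $IC_W\to\omega_W[-\dim W]$; what you get for free is therefore a class in $IH^2(W,\bQ)$ or in Borel--Moore homology, neither of which maps naturally to $H^2(W,\bR)$. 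A trace splitting the adjunction $\bR_W\to R\mu_*\bR_Z$ would force $\mu^*\colon H^k(W,\bQ)\to H^k(Z,\bQ)$ to be split injective for all $k$, which fails for general singular compact spaces (the kernel is the part of weight $<k$).

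The paper sidesteps the construction of an explicit class by arguing by contradiction inside the Bott--Chern framework of \cite{a21}: if the edge map were zero it would in particular vanish on $H^{1,1}_{BC}(W)\hookrightarrow H^2(W,\bR)$, so on a K\"ahler resolution $\tau\colon\tilde W\to W$ one gets $\tau^*H^{1,1}_{BC}(W)\subset[F]^{\perp}$ for the Poincar\'e pairing on $\tilde W$; combined with $\ker(\tau_*)^{\perp}\subset\tau^*H^{1,1}_{BC}(W)$ (\cite[Lemma 3.3]{a21}) and non-degeneracy of the pairing, this forces $\tau_*[F]=0$, a contradiction. Your direct argument can be repaired along the same lines --- interpret $\mu_*\omega_Z$ as a class in $H^{1,1}_{BC}(W)$, apply \cite[Injection (3)]{a21}, and check that the resulting class in $H^2(W,\bR)$ still has fibre integral $\int_{\mu^{-1}(F)}\omega_Z>0$ (your observation that $\mu$ is an isomorphism near a general fibre, because the non-isomorphism locus of a modification of a normal space has codimension at least two, is correct and is what makes this local check work) --- but as written the key object $\mu_*[\omega_Z]\in H^2(W,\bR)$ is not defined, and the rational-singularities hypothesis, which you never use, is exactly what is needed to define it.
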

\begin{proof}
Since $W$ is normal and has at worst rational singularities, by~\cite[Injection (3)]{a21} we have an injection
$$H^{1,1}_{BC}(W) \hto H^2(W,\bR).$$
Assume to the contrary that $H^2(W, \bR) \to H^0(S,R^2p^\eta_*\bR)$ is not surjective, so in particular its restriction to $H^{1,1}_{BC}(W)$ is not surjective. Let $\tau : \tilde{W} \to W$ be a K\"ahler desingularization of $W$. By the projection formula, given an element $\go \in H^2(W, \bR)$, its image in $H^0(S,R^2p^\eta_*\bR) \simeq H^0(S,\bR) \simeq \bR$ equals $\int_F \tau^*\go$ where $F$ is a smooth fiber of $p^\eta \circ \tau: \tilde{W} \to S$. Let $n\colonec \dim W$. The non-surjectivity assumption implies that $\tau^*H^{1,1}_{BC}(W) \subset [F]^\perp$ where the orthogonal is with respect to the Poincar\'e duality pairing 
$$H^{n-1, n-1}(\tilde{W})_\bR \times H^{1,1}(\tilde{W})_\bR \to H^{n,n}(\tilde{W})_\bR \simeq \bR.$$
However since $\ker(\tau_*)^\perp \subset \tau^*H^{1,1}_{BC}(W)$ by~\cite[Lemma 3.3]{a21}, we deduce that $\tau_*[F] = 0$, which is not possible.
\end{proof}

\begin{remark}\label{rem:Kodaira versus us}
The last result gives a direct proof of a phenomenon which was \emph{observed} by Kodaira in
the case $\dim S=1$: he first proved that the cohomology group $H^2(S,j_*H)$ is finite if the \textsc{vhs} is not trivial. He then computed the first Betti number of an elliptic surface when $H$ is trivial and obtained  in \cite[Theorem 11.9]{Kod} that this quantity is even if $\mathtt{c}(\eta(f)) = 0$ and odd otherwise. \emph{A posteriori} we can conclude that an elliptic surface $f:X\to S$ (without multiple fibres) is K\"ahler if and only if $\mathtt{c}(\eta(f))$ is torsion in $H^2(S,j_*H)$. We will now prove that this equivalence also holds in our setting:
\end{remark}

\begin{proposition}\label{prop:criterion_kaehler_elliptic}
In the situation of Assumption \ref{assume}, suppose also that the base $S$ is a compact K\"ahler manifold. Assume that the Weierstra{\ss} fibration $\W \to S$ is minimal. Let $\eta\in H^1(S,\mathcal{J}(H)^\mathbb{W})$ be a class such that $\mathtt{c}(\eta)$ is torsion in $H^2(S,j_*H)$.
Then the total space $\mathbb{W}^\eta \rightarrow S$ is K\"ahler space.
\end{proposition}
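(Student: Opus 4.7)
The approach is to propagate K\"ahlerness from the basic Weierstra\ss\ model $\W$, which is manifestly K\"ahler, to $\W^\eta$ in two steps. First, $\W$ is K\"ahler because it embeds as a closed subspace of the projective bundle $\PP(\cO_S \oplus \cL^2 \oplus \cL^3) \to S$, which carries a K\"ahler form over the K\"ahler base $S$ (cf.~Remark~\ref{remarkrelplusbase}). Since $\mathtt{c}(\eta)$ is torsion in $H^2(S,j_*H)$, the long exact sequence induced by \eqref{ex-1} furnishes an integer $m \geq 1$ and a class $\xi \in V \colonec H^1(S,\cL)$ with $\exp(\xi) = m\eta$; in particular $\mathtt{c}(m\eta) = 0 = \mathtt{c}(0)$.

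I would next apply Proposition~\ref{prop:deformation loc trivial} with reference class $0 \in H^1(S,\cJ(H)^\W)$ to obtain a locally trivial family $\pi: \sX \to S \times V$ containing both $\W = \W^0$ (over $0 \in V$) and $\W^{m\eta}$ (over $\xi \in V$). To conclude that $\W^{m\eta}$ is K\"ahler, I would construct a relative K\"ahler form on $\sX \to S \times V$: on each $U_i \times V$ the local trivialisation identifies $\pi$ with the projection $\W|_{U_i} \times V \to U_i \times V$, which carries the relative K\"ahler form inherited from the embedding $\W \hookrightarrow \PP$; the patching over $S$ makes sense because the transition cocycle takes values in translations by sections of $\cJ(H)^\W$, which act fiberwise as biholomorphisms of the local Weierstra\ss\ pieces. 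Adding a large multiple of the pullback of a K\"ahler form on the K\"ahler base $S \times V$ then yields a K\"ahler form on $\sX$ via Remark~\ref{remarkrelplusbase}, hence on its fiber $\W^{m\eta}$.

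Finally, to pass from $\W^{m\eta}$ to $\W^\eta$, I would invoke the finite multiplication-by-$m$ morphism $\bm: \W^\eta \to \W^{m\eta}$ provided by Lemma~\ref{lem-multm}. Since $\bm$ is \'etale on the group variety locus $(\W^\eta)^\# \subset \W^\eta$, the pullback $\bm^*\omega$ of a K\"ahler form $\omega$ on $\W^{m\eta}$ is a smooth, $d$-closed, fiberwise semi-positive $(1,1)$-form on $\W^\eta$, strictly fiberwise positive away from finitely many singular points of singular fibers. A K\"ahler form on $\W^\eta$ is then obtained by a local $\partial\bar\partial$-correction near the (finitely many per fiber) ramification points combined with adding $\epsilon \cdot p^*\omega_S$ for $\omega_S$ a K\"ahler form on $S$.

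The hardest step is the patching in the second paragraph: the transition translations in $\cJ(H)^\W$ preserve the elliptic structure but do \emph{not} preserve the specific K\"ahler form restricted from the $\PP$-bundle, so a na\"ive pointwise gluing of local K\"ahler forms fails. One must therefore either replace pointwise forms with cohomology classes and appeal to a Bingener--Fujiki-type result on stability of the K\"ahler class under locally trivial deformations, or directly construct global relative potentials using the local structure of Weierstra\ss\ fibrations. The third step is technical but more routine, since the ramification locus of $\bm$ is of codimension $\geq 2$ and a standard Varouchas-type smoothing argument applies.
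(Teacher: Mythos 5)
There is a genuine gap, and it sits exactly where you flag the ``hardest step'': you need $\W^{m\eta}=\W^{\exp(\xi)}$ to be K\"ahler for an \emph{arbitrary} $\xi\in H^1(S,\cL)$, and neither of your proposed fixes closes this. A Bingener--Fujiki-type stability statement (and likewise Namikawa's theorem on deformations of K\"ahler spaces with rational singularities) is an \emph{openness} result: it only gives K\"ahlerness of $\W^{\exp(t)}$ for $t$ in a small neighbourhood of $0$, not for the possibly large class $\xi$ produced by the torsion hypothesis. The direct gluing of the relative K\"ahler forms inherited from the local embeddings $\W|_{U_i}\hto\PP$ fails for the reason you yourself identify (translations do not preserve these forms, and a partition-of-unity average destroys closedness), and ``constructing global relative potentials'' is precisely the unresolved problem restated. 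Note also that your argument never uses the hypothesis that $\W\to S$ is minimal --- a sign that something essential is missing, since minimality is what guarantees (Remark \ref{remarkpropertiesweierstrass}) that the spaces $\W^{\exp(t)}$ have rational singularities.

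The paper's resolution is to use the multiplication map a second time, in the opposite role from your final step. First, $\W$ is K\"ahler and has rational singularities, so by Namikawa (\cite[Proposition 5]{Nam01}) the fibres $\W^{\exp(t)}$ of the tautological family are K\"ahler for $t$ in some neighbourhood $U$ of $0\in H^1(S,\cL)$. For arbitrary $t$, choose $m$ with $t/m\in U$ and apply the finite morphism $\bm:\W^{\exp(t/m)}\to\W^{m\cdot\exp(t/m)}=\W^{\exp(t)}$ of Lemma \ref{lem-multm}: by Varouchas \cite[Cor.\ 3.2.2]{Var89} the finite holomorphic \emph{image} of a compact K\"ahler space is K\"ahler, so $\W^{\exp(t)}$ is K\"ahler. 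This linear rescaling inside $V=H^1(S,\cL)$ is the idea your proposal is missing. Your last step (pulling K\"ahlerness back along the finite map $\W^{\eta}\to\W^{m\eta}$) is correct in substance and coincides with the paper's, though it is cleaner to quote \cite[Proposition 1.3.1]{Var89} than to redo the $\partial\bar\partial$-correction by hand.
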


\begin{proof}
Recall that the Weierstra\ss~ model $p_{\mathbb W}: \mathbb W \rightarrow S$ associated to $H$ is a projective morphism. 
Since $S$ is compact K\"ahler, the total space $\mathbb{W}$ is K\"ahler by Remark \ref{remarkrelplusbase}.
As in the case of smooth elliptic fibrations, the  Weierstra\ss~fibration comes equipped with a family of of elliptic fibrations (over $S$) $\mathcal{W}\To S\times H^1(S,\mathcal{L})$ parametrised by the vector space $H^1(S,\mathcal{L})$ such that
$$\eta(\mathcal{W}_t\to S)=\exp(t)\in H^1(S,\mathcal{J}(H)^\mathbb{W})$$
for any $t\in H^1(S,\mathcal{L})$. By Remark \ref{remarkpropertiesweierstrass} 
the complex spaces $\mathbb{W}^\eta$ have at most canonical, hence rational, singularities.
From \cite[Proposition 5]{Nam01} we know that any small flat deformation of compact K\"ahler space having rational singularities remains K\"ahler. Thus $\mathbb{W}^{\exp(t)}$ is K\"ahler for $t$ in a neighborhood $U$ of $0\in H^1(S,\mathcal{L})$. Now if $t$ is given in $H^1(S,\mathcal{L})$ let us consider a positive integer $m$ such $t/m\in U$. The multiplication-by-$m$ 
$$
\bm : \mathbb{W}^{\exp(t/m)}\To \mathbb{W}^{m\cdot \exp(t/m)}=\mathbb{W}^{\exp(t)}.
$$
is a finite morphism by Lemma~\ref{lem-multm}.
Since $\mathbb{W}^{\exp(t/m)}$ is K\"ahler,  \cite[Cor.3.2.2]{Var89} implies that $\mathbb{W}^{\exp(t)}$ is K\"ahler as well.

Since $\mathtt{c}(\eta)$ is torsion by assumption, there exists a positive integer $k$ and an element $t\in H^1(S,\mathcal{L})$ such that $k\cdot \eta=\exp(t)$. As the multiplication-by-$k$
$$\mathbb{W}^\eta\To \mathbb{W}^{k\cdot\eta}=\mathbb{W}^{\exp(t)}$$
is finite and the target is K\"ahler, we conclude that $\mathbb{W}^{\eta}$ is also K\"ahler (see for instance \cite[Proposition 1.3.1]{Var89}).
\end{proof}

\subsubsection{Hodge theory of Weierstra\ss~models}
The main purpose of this paragraph is to establish the following result.
\begin{theorem}\label{th:hodge_surjective}
In the situation of Assumption \ref{assume}, suppose that $S$ is a compact K\"ahler manifold. Then the canonical map
\begin{equation}\label{eq:Hodge_surjective}
H^1(S,j_*H_\R)\To H^1(S,\mathcal{L})
\end{equation}
is surjective.
\end{theorem}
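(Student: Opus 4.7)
The plan is to identify $H^1(S,\mathcal L)$ with the lowest Hodge piece of a pure weight-$2$ Hodge structure on $H^1(S,j_*H_\C)$, and then to deduce the surjectivity from Hodge symmetry.

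First, I would use that $\mathcal L \cong \mathcal H^{le}/F^1\mathcal H^{le}$, where $\mathcal H^{le}$ denotes the lower canonical extension of $H\otimes \mathcal O_{S^\star}$ equipped with its Hodge filtration (\cite[Lemma 3.2.3]{NakLocal}). Consider the logarithmic de Rham complex
$$\mathrm{DR}^\bullet := \bigl(\mathcal H^{le}\otimes\Omega^\bullet_S(\log D),\ \nabla\bigr)$$
equipped with the Hodge filtration $F^p\mathrm{DR}^k := F^{p-k}\mathcal H^{le}\otimes \Omega^k_S(\log D)$. This complex computes $H^*(S,j_*H_\C)$. Since the variation of Hodge structure $H$ has weight $1$, a direct computation shows that $\mathrm{Gr}^0_F \mathrm{DR}^\bullet$ is concentrated in degree $0$, where it coincides with $\mathcal L$. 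In particular $H^q(S,\mathrm{Gr}^0_F\mathrm{DR}^\bullet) = H^q(S,\mathcal L)$.

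Next, I would invoke the Hodge theory of polarisable variations of Hodge structure on a compact K\"ahler base (Saito's theorem in the K\"ahler setting) to conclude that the Hodge-to-de Rham spectral sequence
$$E_1^{p,q} = H^q(S,\mathrm{Gr}^p_F\mathrm{DR}^\bullet)\ \Longrightarrow\ H^{p+q}(S,j_*H_\C)$$
degenerates at $E_1$ and that $H^1(S,j_*H_\C)$ carries a pure Hodge structure of weight $2$. Combined with the previous step this yields
$$H^1(S,\mathcal L)\ \cong\ \mathrm{Gr}^0_F H^1(S,j_*H_\C)\ \cong\ H^{0,2},$$
where $H^{0,2}$ is the corresponding piece of the Hodge decomposition $H^1(S,j_*H_\C) = H^{2,0}\oplus H^{1,1}\oplus H^{0,2}$.

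Hodge symmetry now provides the surjectivity. Because $j_*H\otimes_\Z\R\simeq j_*(H\otimes\R)$, the real cohomology $H^1(S,j_*H_\R)$ sits inside $H^1(S,j_*H_\C)$ as the fixed locus of complex conjugation, which interchanges $H^{2,0}$ and $H^{0,2}$. Given any class $\alpha\in H^{0,2}$, the element $\alpha + \bar\alpha$ (with $\bar\alpha\in H^{2,0}$) therefore lies in $H^1(S,j_*H_\R)$ and projects to $\alpha$ under $H^1(S,j_*H_\C)\twoheadrightarrow H^{0,2}\cong H^1(S,\mathcal L)$, which establishes the surjectivity of~(\ref{eq:Hodge_surjective}).

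The main obstacle is justifying the $E_1$-degeneration and the pure weight-$2$ Hodge structure in the K\"ahler (rather than projective) setting, since in the algebraic case this is classical (Deligne-Zucker, Saito). The cleanest remedy is to appeal to Saito's extension of Hodge module theory to K\"ahler manifolds; alternatively, one can reduce to a more accessible framework via the Weierstra\ss\ model: since $p_{\mathbb W}:\mathbb W\to S$ is projective and $S$ is K\"ahler, the total space $\mathbb W$ is K\"ahler by Remark~\ref{remarkrelplusbase}, and the required degeneration together with the pure Hodge structure on $H^1(S,j_*H_\C)$ can be extracted from the Leray filtration on $H^2$ of a K\"ahler desingularisation of $\mathbb W$.
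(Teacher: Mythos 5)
Your strategy is genuinely different from the paper's: you try to realise $H^1(S,\mathcal L)$ as the $\mathrm{Gr}^0_F$ piece of a pure weight-$2$ Hodge structure and conclude by Hodge symmetry, whereas the paper translates the surjectivity (via Remark \ref{rem:surjectivity_density}) into density of projective members in the tautological Weierstra{\ss} family and proves that density by the relative Buchdahl--Voisin criterion, computing the Kodaira--Spencer map of a simultaneous functorial resolution explicitly as $f_b^*$ and showing the relevant composite is multiplication by $\omega\cdot F\neq 0$. The paper itself points out that your route is the ``straightforward'' one when $S^\star=S$; the whole point of its geometric detour is to avoid the Hodge theory of degenerating variations over a non-projective K\"ahler base. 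As written, your proposal does not close that difficulty, and it has a second, more concrete gap in the setup.

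First, the logarithmic de Rham complex $\mathcal H^{le}\otimes\Omega^\bullet_S(\log D)$ is quasi-isomorphic to $Rj_*H_\C$ (Deligne), so its hypercohomology is $H^1(S^\star,H_\C)$, not $H^1(S,j_*H_\C)$; the latter is only a subspace (the $E_2^{1,0}$ term of the Leray spectral sequence for $j$). Hence even granting $E_1$-degeneration, what you obtain is a surjection $H^1(S^\star,H_\C)\twoheadrightarrow H^1(S,\mathcal L)$, and surjectivity from the ambient space does not descend to the subspace $H^1(S,j_*H_\R)$. To repair this you would need to know that $H^1(S,j_*H)$ is the lowest-weight part $W_2H^1(S^\star,H)$, that it is pure of weight $2$, and that its $\mathrm{Gr}^0_F$ still exhausts $H^1(S,\mathcal L)$ (equivalently, that $\mathrm{Gr}^W_3H^1(S^\star,H)$ has no $(0,3)$-component) --- which is essentially the statement you are trying to prove. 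Second, the $E_1$-degeneration and purity themselves are the crux: over a projective base this is Zucker/Cattani--Kaplan--Schmid/Saito, but here $S$ is merely K\"ahler. Your remedy (a) (Saito's K\"ahler Hodge modules) is plausible but is not carried out, and it would additionally have to address the discrepancy between $j_*H$ and the intersection complex $IC(H)$, which is where the pure structure actually lives when $\dim S>1$; your remedy (b) (``extract the degeneration from the Leray filtration of a K\"ahler desingularisation of $\mathbb W$'') is not an argument --- $R^1$ of the resolution differs from $j_*H$ by contributions of the singular fibres, and making this precise is at least as hard as the paper's actual proof. So the final Hodge-symmetry step is fine, but the two inputs it relies on (the identification of $H^1(S,\mathcal L)$ with $\mathrm{Gr}^0_F$ of a weight-$2$ structure on $H^1(S,j_*H_\C)$, and the existence of that structure) are both left unestablished.
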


When $S^*=S$ this is a straightforward consequence of the existence of a pure Hodge structure of weight 2 on the lattice $H^1(S,H)$ as constructed by Deligne (see \cite[Theorem 2.9]{Zuc79}).

Theorem~\ref{th:hodge_surjective} will serve as a crucial ingredient in the proof of Theorem~\ref{theoremapproximation}. More precisely, it will be the following corollary that we use in the proof.

\begin{corollary}\label{cor:densite points rationnels G}
In the situation of Assumption~\ref{assume}, let $G$ be a finite group acting $f$-equivariantly on $X$ and on $S$. Then the image of $H^1(S,j_*H_\Q)^G$ in $H^1(S,\mathcal{L})^G$ under the map $H^1(S,j_*H_\R)^G\To H^1(S,\mathcal{L})^G$ is dense.
\end{corollary}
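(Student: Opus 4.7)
The plan is to deduce the corollary essentially formally from Theorem~\ref{th:hodge_surjective} by taking $G$-invariants, using that the averaging operator is a continuous projection that commutes with the map in question, and that $H^1(S,j_*H_\Q)$ is a $\Q$-structure on $H^1(S,j_*H_\R)$.

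First I would observe that since $G$ acts $f$-equivariantly on $X \to S$ and preserves the smooth locus, it acts on the \textsc{vhs} $H$ and hence on all the sheaves involved: $j_*H_\Q$, $j_*H_\R$, and $\mathcal{L} = R^1f_* \sO_X$, compatibly with the morphisms in the commutative square
\begin{equation*}
\xymatrix{
H^1(S,j_*H_\R) \ar[r] \ar@{->>}[d]_{\mathrm{av}_G} & H^1(S,\mathcal{L}) \ar@{->>}[d]^{\mathrm{av}_G} \\
H^1(S,j_*H_\R)^G \ar[r] & H^1(S,\mathcal{L})^G,
}
\end{equation*}
where $\mathrm{av}_G = \frac{1}{|G|}\sum_{g\in G} g^*$ is the averaging projector onto the $G$-invariant part (available since $G$ is finite and we work over characteristic zero). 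By Theorem~\ref{th:hodge_surjective}, the top horizontal arrow is surjective, and a straightforward diagram chase with $\mathrm{av}_G$ then yields the surjectivity of the bottom arrow $H^1(S,j_*H_\R)^G \twoheadrightarrow H^1(S,\mathcal{L})^G$.

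Next I would use that $H^1(S,j_*H_\Q) \otimes_\Q \R \simeq H^1(S,j_*H_\R)$ (the constructible sheaf $j_*H$ is defined over $\Q$, its cohomology groups are finite-dimensional, and extension of scalars commutes with sheaf cohomology in this setting). Consequently $H^1(S,j_*H_\Q)$ is dense in $H^1(S,j_*H_\R)$ for the natural Euclidean topology. Since $\mathrm{av}_G$ is continuous and maps $H^1(S,j_*H_\Q)$ to $H^1(S,j_*H_\Q)^G$ (which equals $H^1(S,j_*H_\Q)\otimes_\Q \R \cap H^1(S,j_*H_\R)^G$), and since $\mathrm{av}_G$ applied to a dense subset of $H^1(S,j_*H_\R)$ is dense in the image $H^1(S,j_*H_\R)^G$, we conclude that $H^1(S,j_*H_\Q)^G$ is dense in $H^1(S,j_*H_\R)^G$.

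Finally, the image of a dense subset under the continuous \emph{surjective} $\R$-linear map $H^1(S,j_*H_\R)^G \twoheadrightarrow H^1(S,\mathcal{L})^G$ is dense; applied to $H^1(S,j_*H_\Q)^G$ this gives exactly the desired density. There is no serious obstacle here beyond verifying the (routine) compatibility of $G$-invariants with the rational structure and with the map~\eqref{eq:Hodge_surjective}; the entire content is packed into Theorem~\ref{th:hodge_surjective}.
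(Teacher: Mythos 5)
Your argument is correct and is essentially the paper's own proof, just written out in more detail: the paper likewise deduces surjectivity of the $G$-invariant part of the map \eqref{eq:Hodge_surjective} from Theorem~\ref{th:hodge_surjective} (using that $G$ is finite, i.e. the averaging projector) and then concludes by the density of $H^1(S,j_*H_\Q)^G$ in $H^1(S,j_*H_\R)^G$. Nothing is missing.
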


\begin{proof}
As $G$ is a finite group and since~\eqref{eq:Hodge_surjective} is  surjective by Theorem~\ref{th:hodge_surjective}, its $G$-invariant part is also surjective. Corollary~\ref{cor:densite points rationnels G} thus follows from the density of $H^1(S,j_*H_\Q)^G$ in $H^1(S,j_*H_\R)^G$.
\end{proof}

\begin{remark}\label{rem:surjectivity_density}
Before giving the proof of Theorem \ref{th:hodge_surjective}, let us remark that this statement has a geometric counterpart. Let $\W \to S$ be the minimal Weierstra{\ss} fibration associated to the \textsc{vhs} $H$. As $S$ is assumed to be compact K\"ahler, by Remark~\ref{remarkrelplusbase}, Theorem~\ref{th:c torsion general}, and Proposition~\ref{prop:criterion_kaehler_elliptic} the total space $\W$ is K\"ahler. So each fiber of the tautological family $\mathcal{W}\To S\times H^1(S,\mathcal{L})\To H^1(S,\mathcal{L})$ containing $\W \to S$ as a fiber (i.e. the family constructed in Proposition~\ref{prop:deformation loc trivial} for $\eta = 0$) is also K\"ahler. Theorem \ref{th:hodge_surjective} is thus equivalent to the density of fibrations $\mathcal{W}_t\to S$ that are projective over $S$. Indeed such an elliptic fibration is projective over $S$ if and only if its cohomology class $\eta(\mathcal{W}_t\to S)$ is torsion in $H^1(S,\mathcal{J}(H)^\mathbb{W})$ (see \cite[Theorem 6.3.8]{Nak02c}). Using the first line of \eqref{eq:diagram}, the exact sequence
$$
H^1(S,j_*H)\To H^1(S,\mathcal{L})\To H^1(S,\mathcal{J}(H)^\mathbb{W})
$$
shows that this happens exactly when $t$ lies in the range of the map $H^1(S,j_*H_\Q)\To H^1(S,\mathcal{L})$. Now it is clear that density of projective elliptic fibrations is equivalent to surjectivity of the map (\ref{eq:Hodge_surjective}).
\end{remark}

To prove Theorem \ref{th:hodge_surjective}, we will establish the density of projective fibrations in the tautological family. The following criterion is reminiscent from Buchdahl's works \cite{Buc06,Buc08}:
 
\begin{lemma}\label{lem:Buchdahl_criterion}
Let $\pi:\mathcal{X}\To B$ be a smooth family of compact K\"ahler manifolds, and let
$\Phi: \mathcal{X}\To S\times B\To B$ be a fibration such that $\pi = pr_{B} \circ \Phi$.
Consider the following \textsc{vhs} over $B$:
$$\mathbb{V}:=R^2\pi_*\Q/H^2(S,\Q).$$
Let $b \in B$ be a point, and $[\omega]$ a K\"ahler class defined on $X:=\mathcal{X}_b$.
If the composition of the maps
$$
T_{B,b}\stackrel{\kappa\varsigma}{\To} H^1(X,T_X)\stackrel{\bullet\wedge [\omega]}{\To} H^2(X,\sO_X)\To \mathbb{V}_b^{0,2}
$$
is surjective, then the set of parameters $u\in B$ such that the morphism $\mathcal{X}_u\to S$ is projective is dense near $b$.
\end{lemma}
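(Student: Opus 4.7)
The plan is a Buchdahl-type implicit function argument adapted to the quotient variation of Hodge structure $\mathbb{V}$. Via the Gauss--Manin connection we identify $\mathbb{V}_u^\bR$ with the fixed real vector space $V \colonec \mathbb{V}_b^\bR$ for $u$ in a neighbourhood of $b$; the Hodge filtration $F^\bullet \mathbb{V}_\bC$ varies holomorphically over $B$. Set $W \colonec \mathbb{V}_b^{0,2}$ and write $\bar\omega \in V$ for the image of $[\omega]$ under $H^2(X,\bR)\onto \mathbb{V}_b^\bR$. Consider the smooth map
\[
\Xi : V \times B \To W, \qquad \Xi(\alpha, u) \colonec \text{projection of }(\bar\omega + \alpha) \in V\otimes \bC \text{ to } \mathbb{V}_u^{0,2} \simeq W,
\]
where the identification $\mathbb{V}_u^{0,2}\simeq W$ uses any local $C^\infty$-trivialization of the quotient bundle $\mathbb{V}_\bC/F^1\mathbb{V}_\bC$. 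Since $\bar\omega$ is of type $(1,1)$ at $b$, we have $\Xi(0,b)=0$.

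The partial $\partial_\alpha \Xi(0,b) : V \to W$ is the real-linear Hodge projection $\alpha \mapsto \alpha^{0,2}_b$, which is surjective with kernel $\mathbb{V}_b^{1,1}\cap V$. By Griffiths' formula for the derivative of the period map applied to the class $\bar\omega \in F^1\mathbb{V}_b$, the partial $\partial_u \Xi(0,b) : T_{B,b} \to W$ coincides with the composition $T_{B,b}\to H^1(X,T_X)\to H^2(X,\sO_X)\to \mathbb{V}_b^{0,2}$ of the statement, hence is surjective by hypothesis. Therefore $d\Xi_{(0,b)}$ is surjective and $\mathcal{Y} \colonec \Xi^{-1}(0)$ is, near $(0,b)$, a real-analytic submanifold of codimension $\dim_\bR W$. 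Using the surjectivity of one partial to match the other, one checks that both projections $q : \mathcal{Y} \to V$ and $p : \mathcal{Y} \to B$ are submersions at $(0,b)$: for any $v\in V$, the surjectivity of $\partial_u\Xi$ produces $w\in T_{B,b}$ with $\partial_u\Xi(w)=-\partial_\alpha\Xi(v)$, so $(v,w)\in T_{(0,b)}\mathcal{Y}$, and symmetrically for $p$.

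Both $p$ and $q$ are thus open maps near $(0,b)$. As $V^\bQ$ is dense in $V$, the preimage $q^{-1}(V^\bQ)$ is dense in $\mathcal{Y}$, and hence $p(q^{-1}(V^\bQ))$ is dense in the open set $p(\mathcal{Y})$, which contains a neighbourhood of $b$. For every $u$ in this dense subset, there exists a small rational class $\alpha\in V^\bQ$ such that $\bar\omega + \alpha \in \mathbb{V}_u^\bQ$ is of Hodge type $(1,1)$ at $u$.

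It remains to upgrade such a rational $(1,1)$ class in $\mathbb{V}_u^\bQ$ to relative projectivity of $\mathcal{X}_u \to S$. By the semisimplicity of polarizable rational Hodge structures on the $H^2$ of the compact K\"ahler manifold $\mathcal{X}_u$, the quotient sequence of polarized $\bQ$-Hodge structures obtained from $\mathbb{V}_u^\bQ = R^2\pi_*\bQ|_u / H^2(S,\bQ)$ splits, and $\mathbb{V}_u^\bQ$ embeds as an orthogonal direct summand of $H^2(\mathcal{X}_u,\bQ)$. Hence $\bar\omega + \alpha$ lifts to a rational $(1,1)$ class $\tilde\alpha \in H^{1,1}(\mathcal{X}_u, \bQ)$. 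For $\alpha$ small and $u$ close to $b$, the restriction of $\tilde\alpha$ to any fiber of $\Phi|_{\mathcal{X}_u}:\mathcal{X}_u\to S$ stays close to $[\omega]|_F$ and hence K\"ahler by the openness of the K\"ahler cone; by Kodaira's criterion, some positive multiple of $\tilde\alpha$ is the first Chern class of a relatively ample line bundle on $\mathcal{X}_u$ over $S$, so the morphism $\mathcal{X}_u \to S$ is projective, and the set of such $u$ is dense near $b$. The main technical point is the Griffiths identification of $\partial_u\Xi(0,b)$ with the composition given in the statement; once this is in place, the density is a routine openness-of-submersion argument and the lifting is standard Hodge theory.
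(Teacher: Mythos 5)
Your argument is, in substance, the same as the paper's: the proof given in the text is simply the citation of \cite[Proposition 17.20]{V02} applied to the \textsc{vhs} $\mathbb{V}$, and what you have written is essentially Voisin's proof of that proposition unwound (flat trivialisation of $\mathbb{V}^{\bR}$, the map $\Xi$ recording the $(0,2)$-component, surjectivity of $\partial_\alpha\Xi$ by realness of weight-two Hodge structures, identification of $\partial_u\Xi$ with the composition in the statement via Griffiths transversality, and the two-submersion density argument). That core is correct, up to one bookkeeping slip: with your parametrisation, requiring $\alpha\in V^{\bQ}$ does \emph{not} make $\bar\omega+\alpha$ rational. You should instead take $\alpha$ in the dense coset $V^{\bQ}-\bar\omega$ (equivalently, parametrise by $\beta=\bar\omega+\alpha$ and pull back $V^{\bQ}$ under $(\alpha,u)\mapsto\bar\omega+\alpha$); the submersion argument applies verbatim to this dense subset, so the fix is one line.

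The one step that is genuinely incomplete is the last one, where you pass from a rational Hodge class $\beta\in\mathbb{V}_u^{\bQ}$ near $\bar\omega$ to projectivity of the \emph{morphism} $\mathcal{X}_u\to S$. The semisimplicity lifting to a rational $(1,1)$ class $\tilde\alpha$ on $\mathcal{X}_u$ is fine, but the claim that $\tilde\alpha$ restricts to a K\"ahler class on \emph{every} fibre of $\Phi_u$ "by continuity and openness of the K\"ahler cone" only makes sense over the locus where $\Phi$ is smooth; over the discriminant the fibres jump (they may be singular, reducible, or of excess dimension), and neither the continuity of the restriction nor the meaning of "close to $[\omega]|_F$" is clear there. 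One also needs a relative Kodaira/ampleness criterion to go from fibrewise positivity to $\Phi_u$-ampleness. In the paper this issue never arises: in the only place the lemma is used (Theorem \ref{th:hodge_surjective} via Remark \ref{rem:surjectivity_density}), the fibration is elliptic and projectivity over $S$ is read off from the torsion of the class $\eta(\mathcal{W}_t\to S)$ by \cite[Theorem 6.3.8]{Nak02c}; all that is needed from the density statement is a rational $(1,1)$ class with nonzero degree on the fibres, which your argument does produce. So your proof establishes what is actually used, but as a proof of the lemma in the generality stated, the final paragraph needs either the restriction to the elliptic setting or a genuine relative projectivity criterion (e.g.\ in the spirit of \cite{Bin83}).
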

In the statement above, the first arrow is the Kodaira-Spencer map associated to $\pi$, and the second one is induced by the contraction with the class $\omega\in H^1(X,\Omega^1_X)$.

\begin{proof}
This is nothing but \cite[Proposition 17.20, p.410]{V02} applied to the \textsc{vhs} $\mathbb{V}$.
\end{proof}

The deformation families provided by Nakayama's theory are not smooth, so in order to apply the relative Buchdahl criterion
we have to pass to a smooth model. Koll\'ar's theory of strong resolutions \cite[Chapter 3]{Kol07} gives a resolution in families:

\begin{lemma}\label{lem:resolution_family}
Let $\holom{p_0}{\mathbb W_0}{S}$ be a fibration from a normal compact complex space onto a compact complex manifold $S$, and
$p: \sW \rightarrow S \times B$ be a locally trivial deformation of $(\mathbb W_0, p_0)$ (cf. Definition \ref{definitionlocallytrivial}). Then (up to replacing $B$ by a smaller open set) there exists a resolution of singularities 
$\holom{\mu}{\sX}{\sW}$ such that the family
$$
\pi := pr_{B} \circ p \circ \mu : \sX \rightarrow B
$$
is a family of compact complex manifolds, and for every $b \in B$ the map $\holom{\mu_b}{\mathcal X_b}{\mathbb W_b}$
is a resolution of singularities that is functorial.
\end{lemma}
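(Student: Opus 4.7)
The plan is to apply Koll\'ar's functorial resolution of singularities to the total space $\sW$ and exploit local triviality of the family to see that the resulting resolution is itself a locally trivial deformation over $B$.

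More concretely, by the definition of a locally trivial deformation, after replacing $B$ by a smaller polydisc around $0$ we can find an open cover $(U_i)_{i \in I}$ of $S$ and biholomorphisms
\[
p^{-1}(U_i \times B) \ \simeq\ p_0^{-1}(U_i) \times B
\]
over $U_i \times B$, where the second factor corresponds to the map $\pi$. Apply a functorial resolution $\holom{\mu_0}{\sX_0}{\mathbb W_0}$ in the sense of Koll\'ar \cite[Thm.~3.35]{Kol07}, i.e.\ one that commutes with smooth morphisms and with open immersions. Restricting to the open pieces $p_0^{-1}(U_i) \subset \mathbb W_0$ gives resolutions $\mu_{0,i}$ that agree on overlaps by functoriality for open immersions. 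Because $\mathrm{pr}_1 : p_0^{-1}(U_i) \times B \to p_0^{-1}(U_i)$ is smooth, functoriality for smooth morphisms gives that $\mu_{0,i} \times \mathrm{id}_B : \sX_{0,i} \times B \to p_0^{-1}(U_i) \times B$ is the canonical resolution of the product. Transporting this back to $p^{-1}(U_i \times B)$ through the trivialisation, we obtain a resolution
\[
\mu_i : \sX_{0,i} \times B \longrightarrow p^{-1}(U_i \times B),
\]
and a second application of functoriality for open immersions shows that the $\mu_i$ glue along the preimages of $U_i \cap U_j$ (since the transition biholomorphisms of the cover, being isomorphisms between open subsets of $\sW$, are intertwined by the canonical resolution). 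The result is a global morphism $\holom{\mu}{\sX}{\sW}$ from a smooth complex manifold $\sX$, which is a resolution of singularities.

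To verify the fibre-wise properties, observe that each chart $p^{-1}(U_i \times B) \simeq p_0^{-1}(U_i) \times B$ is preserved by $\mu$ as the product of a resolution with $B$, so the composition $\pi = \mathrm{pr}_B \circ p \circ \mu$ is locally on $\sX$ identified with the second projection $\sX_{0,i} \times B \to B$. In particular $\pi$ is a proper submersion, hence a smooth family of compact complex manifolds, and for every $b \in B$ the restriction $\mu_b$ is obtained by gluing the $\mu_{0,i} \times \{b\}$, which is exactly the functorial resolution of $\sW_b$, since $\sW_b$ carries the same local triviality structure as $\sW$ over $\{b\}$.

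The main obstacle is really just a bookkeeping point: one must verify that Koll\'ar's functorial resolution, developed in the algebraic setting, applies in the analytic/K\"ahler context and continues to be functorial with respect to both smooth morphisms and open immersions of complex spaces. Granting this (which is standard, see \cite[Chapter 3]{Kol07}), everything else is a straightforward gluing argument; compactness of $\sW_0$ guarantees that the resolution terminates after finitely many blow-ups so that no further shrinking of $B$ is needed beyond the initial choice that trivialises the family.
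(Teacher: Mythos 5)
Your proposal is correct and rests on exactly the same ingredients as the paper's proof: Koll\'ar's functorial resolution (with W{\l}odarczyk's version covering the analytic case) together with its compatibility with smooth morphisms, applied to the trivialising charts $p^{-1}(U_i\times B)\simeq p_0^{-1}(U_i)\times B$. The only cosmetic difference is direction -- the paper resolves $\sW$ globally and reads off the local product structure, while you build the resolution by gluing $\mu_{0,i}\times\mathrm{id}_B$ -- but by functoriality these yield the same map, so this is essentially the paper's argument.
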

\begin{proof}
By \cite[Theorem 3.35]{Kol07} (the analytic situation is dealt with in \cite{Wlo09}) there exists a functorial resolution $\holom{\mu}{\sX}{\sW}$. This resolution commutes with
smooth maps, so if $(U_i)_{i \in I}$ is a finite open cover\footnote{By Definition \ref{definitionlocallytrivial} these covers
exist up to replacing $B$ by a smaller open subset.} of $S$ 
such that $\fibre{p}{U_i \times B} \simeq \fibre{p_0}{U_i} \times B$, then the resolution is the product
of the functorial resolution of  $\fibre{p_0}{U_i}$ times the identity.
\end{proof}

We follow the convention of \cite[Appendix B, p.287]{Ser06}: given a morphism $\holom{f}{X}{S}$ of normal varieties, we denote by 
$T_{X/S}$ the dual of the sheaf of K\"ahler differentials $\Omega_{X/S}$. In particular $T_{X/S}$ is always a reflexive sheaf.

\begin{lemma}
Let \holom{p^\eta}{\mathbb W^\eta}{S} be a minimal local Weierstra\ss~fibration over a smooth base $S$.
Then we have 
\begin{equation} \label{relativeT}
T_{\mathbb{W}^\eta/S} \simeq (p^\eta)^* \cL.
\end{equation}

Moreover, let $\holom{\mu}{X}{\mathbb W^\eta}$ be a functorial resolution of singularities and set $f:=p^\eta \circ \mu$.
Then there exists a natural injection $\mu^* T_{\mathbb{W}^\eta/S} \rightarrow T_{X/S}$ inducing a map
\begin{equation} \label{pullH1}
H^1(\mathbb W^\eta, T_{\mathbb W^\eta/S})
\rightarrow
H^1(X, T_{X/S}).
\end{equation}
\end{lemma}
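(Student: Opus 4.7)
The proof splits into the two claims; the main obstacle is the second one, where one must track precisely how $T_{X/S}$ compares to $\mu^*T_{\W^\eta/S}$ across the exceptional locus of $\mu$.

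For the identification $T_{\W^\eta/S}\simeq (p^\eta)^*\mathcal L$, the plan is to compute $\omega_{\W^\eta/S}$ locally by adjunction and then to globalise the answer using the gluing data of Construction \ref{constructelliptic}. Over a trivialising open $U_i\subset S$, the Weierstra\ss\ piece $\W_i$ is cut out in $\PP_i=\PP(\mathcal O\oplus \mathcal L^2\oplus \mathcal L^3)|_{U_i}$ by a section of $\mathcal O_\PP(3)\otimes p^*\mathcal L^{-6}$, while the relative Euler sequence gives $\omega_{\PP/S}=\mathcal O_\PP(-3)\otimes p^*\mathcal L^5$. Adjunction then yields $\omega_{\W_i/U_i}= p^*\mathcal L^{-1}|_{\W_i}$, and since $\W_i$ is Gorenstein of relative dimension one we obtain $T_{\W_i/U_i}=\omega_{\W_i/U_i}^{-1}=p^*\mathcal L$. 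The transition maps in Construction \ref{constructelliptic} are fibrewise translations in the group variety $\W^{\#}$, and such translations act trivially on the invariant generator of the relative tangent sheaf of each Weierstra\ss\ piece. Hence the local trivialisations patch into a canonical global isomorphism $T_{\W^\eta/S}\simeq (p^\eta)^*\mathcal L$.

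For the natural injection, both $\mu^*T_{\W^\eta/S}\simeq f^*\mathcal L$ and $T_{X/S}$ are line bundles on the smooth variety $X$ (the second because it is a reflexive rank-one sheaf on a smooth variety). Over the open set $V=\mu^{-1}(\W^{\#})$ where $\mu$ is an isomorphism onto the smooth locus of $\W^\eta$, the differential $d\mu$ provides a canonical identification $\mu^*T_{\W^\eta/S}|_V\simeq T_{X/S}|_V$. The remaining step is to extend this identification to a morphism of line bundles on all of $X$ and verify its injectivity; I would do this by dualising the natural map of cotangent sheaves $\mu^*\Omega_{\W^\eta/S}\to\Omega_{X/S}$ arising from the relative cotangent sequence for the composition $f=p^\eta\circ \mu$, and analysing locally how it behaves across the exceptional divisor of $\mu$ using that $\W^\eta$ has Gorenstein canonical singularities (Remark \ref{remarkpropertiesweierstrass}).

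Once the injection is in hand, the induced map on $H^1$ is obtained as follows. Because $\W^\eta$ has rational singularities, $R\mu_*\mathcal O_X=\mathcal O_{\W^\eta}$, and the projection formula then gives $R\mu_*(\mu^*T_{\W^\eta/S})=T_{\W^\eta/S}$. The Leray spectral sequence produces canonical isomorphisms $H^i(X,\mu^*T_{\W^\eta/S})\simeq H^i(\W^\eta,T_{\W^\eta/S})$ for every $i\ge 0$; composing the pullback isomorphism $H^1(\W^\eta,T_{\W^\eta/S})\simeq H^1(X,\mu^*T_{\W^\eta/S})$ with the map induced in $H^1$ by the injection yields the desired $H^1(\W^\eta,T_{\W^\eta/S})\to H^1(X,T_{X/S})$. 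The main obstacle is the construction and injectivity of the extension across the exceptional divisor: this is where the local structure theory of minimal Weierstra\ss\ fibrations and the Gorenstein canonical property of their singularities enter essentially.
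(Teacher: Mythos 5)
Your treatment of the isomorphism $T_{\W^\eta/S}\simeq(p^\eta)^*\cL$ via adjunction in $\PP(\sO_S\oplus\cL^2\oplus\cL^3)$ is a valid alternative to the paper's argument, which instead observes that both sheaves are reflexive and agree outside the codimension-two locus where $p^\eta$ fails to be smooth. Be aware, however, that the step ``$\W_i$ is Gorenstein of relative dimension one, hence $T_{\W_i/U_i}=\omega_{\W_i/U_i}^{-1}$'' is not a formal consequence of the Gorenstein property: already for a single nodal cubic $C$ one has $\Omega_C^\vee\not\simeq\omega_C^{-1}$. What makes your computation land correctly is precisely the reflexivity argument: $\Omega_{\W^\eta/S}^\vee$ and $\omega_{\W^\eta/S}^{-1}$ are both reflexive of rank one on the normal space $\W^\eta$ and agree off a codimension-two set, hence coincide. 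So your adjunction computation is correct but still needs the paper's extension step to close.

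The genuine gap is in the construction of the injection $\mu^*T_{\W^\eta/S}\to T_{X/S}$. Dualising the natural map $\mu^*\Omega_{\W^\eta/S}\to\Omega_{X/S}$ produces a morphism $T_{X/S}=\Omega_{X/S}^\vee\to(\mu^*\Omega_{\W^\eta/S})^\vee$, which points in the \emph{wrong direction} (and lands in $(\mu^*\Omega_{\W^\eta/S})^\vee$ rather than $\mu^*(\Omega_{\W^\eta/S}^\vee)$, which differ since $\Omega_{\W^\eta/S}$ is not locally free). Your observation that $f^*\cL$ and $T_{X/S}$ are line bundles canonically identified over the open set where $\mu$ is an isomorphism and $p^\eta$ is smooth only yields a \emph{rational} isomorphism: the complement contains the $\mu$-exceptional divisor, which has codimension one, so nothing extends for free, and a priori one could have $T_{X/S}\simeq f^*\cL(-E)$ for some effective exceptional divisor $E$, in which case no injection $f^*\cL\to T_{X/S}$ exists at all. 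The missing idea --- and the reason the statement insists that $\mu$ be a \emph{functorial} resolution --- is the extension theorem of Greb--Kebekus--Kov\'acs: functoriality implies that $\mu_*T_X$ is reflexive, hence (using $0\to T_{X/S}\to T_X\to f^*T_S$ and torsion-freeness of $f^*T_S$) so is $\mu_*T_{X/S}$, which therefore coincides with $T_{\W^\eta/S}$; the desired injection is then obtained by applying the projection formula to the inverse isomorphism $T_{\W^\eta/S}\to\mu_*T_{X/S}$ and composing with the adjunction $\mu^*\mu_*T_{X/S}\to T_{X/S}$. You never invoke functoriality, so this step cannot be recovered from your argument as written. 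Your final passage to $H^1$ (rational singularities, projection formula, Leray) agrees with the paper.
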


\begin{proof}
All the fibres of $p^\eta$ are reduced plane cubics, so there exists
a codimension two subset $Z \subset \mathbb W^\eta$ such that $p^\eta|_{\mathbb{W}^\eta \setminus Z}$ is a smooth fibration. On this
smooth locus we have by construction $T_{\mathbb{W}^\eta \setminus Z/S} \simeq (p^\eta|_{\mathbb{W}^\eta \setminus Z})^* \cL$. Since $T_{\mathbb{W}^\eta/S}$ and $p^* \cL$ are both reflexive and $\W^\eta$ is normal, the isomorphism extends to an isomorphism on $\mathbb W^\eta$.
This shows \eqref{relativeT}, in particular $T_{\mathbb{W}^\eta/S}$ is locally free.

Since the resolution $\mu$ is functorial, the direct image sheaf $\mu_* (T_X) \subset T_{\mathbb W^\eta}$ is reflexive \cite[Corollary 4.7]{GKK10}. Thus for any open subset $U \subset \mathbb W^\eta$, the restriction map
$$
\Gamma(\fibre{\mu}{U}, T_X) \rightarrow \Gamma(\fibre{\mu}{U} \setminus \mbox{Exc}(\mu), T_X)
$$
is surjective. Using the exact sequence
$$
0 \rightarrow T_{X/S} \rightarrow T_X \rightarrow f^* T_S
$$
and the fact that $f^* T_S$ is torsion-free, we obtain that
$$
\Gamma(\fibre{\mu}{U}, T_{X/S}) \rightarrow \Gamma(\fibre{\mu}{U} \setminus \mbox{Exc}(\mu), T_{X/S})
$$
is surjective. Thus $\mu_* (T_{X/S})$ is reflexive, and the natural map $\mu_* (T_{X/S}) \rightarrow T_{\mathbb{W}^\eta/S}$ is an
isomorphism. By applying the projection formula to the inverse $T_{\mathbb{W}^\eta/S} \rightarrow \mu_* (T_{X/S})$ we obtain an
injective morphism
$$
\mu^* T_{\mathbb{W}^\eta/S} \rightarrow T_{X/S}.
$$
Since $T_{\mathbb{W}^\eta/S}$ is locally free and $\mathbb{W}^\eta$ has rational singularities, we have an isomorphism
$$
H^1(\mathbb W^\eta, T_{\mathbb W^\eta/S}) \simeq H^1(X, \mu^* T_{\mathbb W^\eta/S})
$$
given by $\mu^*$. The statement follows by composing this isomorphism with the map 
$H^1(X, \mu^* T_{\mathbb W^\eta/S}) \rightarrow H^1(X, T_{X/S})$.
\end{proof}

Following \cite[Chapter 3.4.2]{Ser06} (cf. \cite{FK87} for a presentation in the analytic setting) 
we consider the functor of locally trivial deformations of $\holom{f}{X}{S}$ with fixed target $S$.
By \cite[Lemma 3.4.7.b) and Theorem 3.4.8]{Ser06} we have an injection
$$
H^1(X, T_{X/S}) \hookrightarrow D_{X/S},
$$
where $D_{X/S}$ is the tangent space of the semiuniversal deformation. Given a locally trivial deformation
$$
\xymatrix{
\mathcal{X} \ar[r]^\Phi \ar[rd]_\pi & S \times B \ar[d]^{pr_B}
 \\
& B 
}
$$
parametrised by a smooth base $B$, we have for every $b \in B$ a Kodaira-Spencer map
$$
\kappa \varsigma_{\Phi}: T_{B,b} \rightarrow D_{X/S}
$$
associating a tangent vector with the corresponding first-order deformation.

\begin{proof}[Proof of Theorem \ref{th:hodge_surjective}]
Let us first recall from Remark \ref{rem:surjectivity_density} that the surjectivity involved in the statement of Theorem \ref{th:hodge_surjective} is equivalent to the density of projective fibrations in the tautological Weierstra\ss~family
$$
\holom{p}{\sW}{S \times H^1(S, \mathcal L)}.
$$
Fix now $b \in H^1(S,\sL)$. 
By Lemma \ref{lem:resolution_family} there exists (up to replacing the base $H^1(S,\sL)$ by a neighbourhood of the point $b$) 
a simultaneous functorial resolution of the tautological family:
$$
\xymatrix{\mathcal{X} \ar[rr]^\mu \ar[rd]^\Phi \ar @/_/ [rdd]_\pi & &\mathcal{W}\ar[ld]_p \ar @/^/[ldd] \\
& S \times H^1(S,\sL)\ar[d]&\\ & H^1(S,\sL) &}
$$
In order to simplify the notation, we replace
$\holom{p_b}{\mathbb{W}^{\exp(b)}}{S}$ by $\holom{p_b}{\mathbb{W}_b}{S}$. We
have a commutative diagram
$$
\xymatrix{
\sX_b \ar[rr]^{\mu_b} \ar[rd]_{f_b} && \mathbb{W}_b \ar[ld]^{p_b} \\&S&
}
$$
where $\mu_b$ is a functorial resolution of singularities of $\mathbb{W}_b$. 

{\em Step 1. The Kodaira-Spencer map $\kappa \varsigma_{\Phi,b}$ is given by $f_b^*$.}
Using \eqref{relativeT} one shows easily that the  Kodaira-Spencer map $\kappa\varsigma_p$ for any point $b \in H^1(S, \mathcal L)$ identifies to the pull-back
$$
p_b^*: H^1(S, \sL) \rightarrow H^1(\mathbb{W}_b, p_b^* \sL) \simeq 
H^1(\mathbb{W}_b, T_{\mathbb{W}_b/S}).
$$ 
By functoriality of the Kodaira-Spencer map we have a factorisation
$$
\xymatrix{
H^1(S,\sL)
\ar[r]^{\kappa\varsigma_{\Phi, b}} 
\ar[rd]_{\kappa\varsigma_{p, b}}
& H^1(\sX_b ,T_{\sX_b/S})
\ar[d]
\\
& 
H^1(\mathbb{W}_b, T_{\mathbb{W}_b/S}).
}
$$
By \eqref{pullH1} the right column has 
an inverse $H^1(\mathbb{W}_b, T_{\mathbb{W}_b/S}) \rightarrow H^1(\sX_b ,T_{\sX_b/S})$ defined by $\mu_b^*$.
Since $\kappa\varsigma_{p,b}$ identifies to the pull-back $p_b^*$ we obtain that
$\kappa\varsigma_{\Phi,b}$ identifies to
$$
f_b^* : H^1(S, \sL) \rightarrow H^1(\sX_B, (f_b)^* \sL).
$$

{\em Step 2. Applying the relative Buchdahl criterion.}
We want to apply Lemma \ref{lem:Buchdahl_criterion} to the family $\sX\to S\times H^1(S,\sL)\to H^1(S,\sL)$. 
Fix a point $b \in H^1(S,\sL)$, and observe that 
$$
\mathbb{V}_b^{0,2} = H^2(\sX_b,\sO_{\sX_b})/H^2(S,\sO_S).
$$
We have to check that the composed map
\begin{equation}\label{eq:composed_map}
H^1(S,\sL)\stackrel{\kappa\varsigma_{\Phi,b}}{\To}H^1(\sX_b,T_{\sX_b})\stackrel{\bullet\wedge[\omega]}{\To} H^2(\sX_b,\sO_{\sX_b})\To H^2(\sX_b,\sO_{\sX_b})/H^2(S,\sO_S)
\end{equation}
is surjective. 

\noindent\textit{Claim: the quotient $H^2(\sX_b,\sO_{\sX_b})/H^2(S,\sO_S)$ is isomorphic to $H^1(S,\sL)$ and the projection $H^2(\sX_b,\sO_{\sX_b}) \To H^1(S,\sL)$ is induced by $(f_b)_*$.}

By Theorem \ref{thm-van} we have $R^1 (f_b)_* \sO_{\sX_b} \simeq \sL$ and $R^2 (f_b)_* \sO_{\sX_b}=0$.
Moreover, since $\sX_b$ is compact K\"ahler, the natural map $H^2(S, \sO_S) \rightarrow H^2(\sX_b,\sO_{\sX_b})$ is injective.
Thus the Leray spectral sequence yields an exact sequence
$$
0 \rightarrow H^2(S, \sO_S) \rightarrow H^2(\sX_b,\sO_{\sX_b}) \rightarrow H^1(S, R^1 (f_b)_* \sO_{\sX_b}) \simeq
H^1(S, \sL) \rightarrow 0.
$$
This shows the claim.

Combining Step 1 and the claim the composed map \eqref{eq:composed_map} is given by:
$$
H^1(S,\sL)\stackrel{f_b^*}{\To}H^1(\sX_b,T_{\sX_b})\stackrel{\bullet\wedge[\omega]}{\To} H^2(\sX_b,\sO_{\sX_b})
\stackrel{(f_b)_*}{\To} H^2(\sX_b,\sO_{\sX_b})/H^2(S,\sO_S).
$$
Using Dolbeault representatives
we see that
$$
(f_b)_*(\omega \wedge f_b^*(\alpha))= c_\omega \alpha
$$
where $c_\omega= \omega \cdot F$ with $F$ a general $f_b$-fibre.
Thus (\ref{eq:composed_map}) is even an isomorphism. 
\end{proof}


\section{Proofs of the main results}

\subsection{Proof of Theorem \ref{theoremapproximation} and Corollary~\ref{theoremmain}}

\begin{proof}[Proof of Theorem \ref{theoremapproximation}]
Since $a(X) = \dim X - 1$ and $X$ is K\"ahler, the algebraic reduction $X \dto S$ of $X$ is an almost holomorphic map onto a projective variety $S$ whose general fiber is an elliptic curve. Up to replacing $X$ by a bimeromorphic model, we can assume that $X \to S$ is an elliptic fibration such that both $X$ and $S$ are smooth, $X$ is K\"ahler, $S$ is projective, and the discriminant locus $\gD \subset S$ is a normal crossing divisor. By Proposition~\ref{prop:local meromorphic section}, there exists a finite Galois cover $r : \tilde{S} \to S$ such that the elliptic fibration $\tilde{X} \colonec X \times_S \tilde{S} \to \tilde{S}$ has local meromorphic sections at every point of $\tilde{S}$; let $G \colonec \Gal(\tilde{S} / S)$.

Let $\eta \in H^1(\tilde{S},\cJ(\tilde{H})_{mer})$ be the element associated to $\tilde{X} \to \tilde{S}$ where $\tilde{H} \colonec r^* H$ and let $V \colonec H^1(\tilde{S},\cL_{\tilde{H}/\tilde{S}})$.
The $G$-action on $\tilde{H}$ induces a $G$-action on $V$; let $V^G$ be the $G$-invariant part. 

\begin{lemma}\label{lem-loctriv}
There exist a family $\Pi : \wti{\cX} \to \tilde{S} \times V^G $ of elliptic fibrations over $\tilde{S}$ parameterized by $V^G$  together with  a $\Pi$-equivariant $G$-action on $\wti{\cX}$ such that 
\begin{enumerate}[i)]
\item the elliptic fibration $\wti{\cX}_\phi \to \tilde{S}$ parameterized by $\phi \in V^G$ corresponds to $\exp{\phi} + \eta \in H^1(\tilde{S},\cJ(\tilde{H})_{mer})$;
\item there exist a $G$-invariant open cover $\{U_i\}$ of $\tilde{S}$ and a minimal Weierstra{\ss} fibration $\pi : W \to \tilde{S}$ together with a $\pi$-equivariant $G$-action such that $\Pi^{-1}(U_i \times V^G) \simeq W_i \times V^G $ over $U_i$ where $W_i \colonec \pi^{-1}(U_i)$ (in particular $\Pi$ is locally trivial in the sense of Definition~\ref{definitionlocallytrivial});
\item moreover, for each $g \in G$ and each pair of indices $i$ and $j$ such that $g(U_i) = U_j$, the restriction $g : \Pi^{-1}(U_i \times V^G ) \xrightarrow{\sim} \Pi^{-1}(U_j \times V^G)$ of the $G$-action on $\wti{\cX}$ to $\Pi^{-1}(U_i \times V^G)$ is isomorphic to $(g_{|W_i} \times \Id) : W_i \times V^G  \xrightarrow{\sim} W_j \times V^G$.  
\end{enumerate}
As an immediate consequence of ii) and iii), the quotient $\wti{\cX}/G \to S \times V^G$ of $\wti{\cX} \to \tilde{S} \times V^G$ is a locally trivial family of elliptic fibrations over $S$ parameterized by $V^G$.
\end{lemma}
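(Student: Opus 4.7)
The plan is to perform Construction~\ref{constructelliptic} in families and in a $G$-equivariant fashion over the parameter space $\tilde{S} \times V^G$. The starting point is the minimal Weierstra{\ss} model $\pi: W \to \tilde{S}$ attached to the \textsc{vhs} $\tilde{H} = r^* H$, whose existence and uniqueness are provided by Theorem~\ref{theoremlocalproperties}. Because this model is canonical and $\tilde{H}$ carries a tautological $G$-equivariant structure pulled back from $S$, the $G$-action on $\tilde{S}$ lifts uniquely to a $\pi$-equivariant action on $W$. This induces compatible $G$-actions on $j_*\tilde{H}$, $\cJ(\tilde{H})^\W$ and $\cL_{\tilde{H}/\tilde{S}}$, hence on $V = H^1(\tilde{S}, \cL_{\tilde{H}/\tilde{S}})$, so that the invariant subspace $V^G$ is well-defined.

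Next, refine a trivializing open cover of $\pi$ to a $G$-invariant one $\{U_i\}_{i \in I}$ by intersecting it with all its (finitely many) $G$-translates; the resulting permutation action of $G$ on $I$ is precisely what appears in condition iii). The class $\eta$ representing $\tilde{X} = X \times_S \tilde{S} \to \tilde{S}$ is automatically $G$-invariant, since this elliptic fibration is the pullback of $X \to S$ along the Galois cover $r$. Working with a locally Weierstra{\ss} bimeromorphic model and invoking Lemma~\ref{lem:Kahler implies virtually weierstrass} together with Construction~\ref{constructelliptic}, we may assume $\eta$ is represented by a $G$-invariant class in $H^1(\tilde{S}, \cJ(\tilde{H})^\W)$, and even by an explicit $G$-equivariant \v{C}ech cocycle on $\{U_i\}$ (averaging over the finite group $G$, after identifying translates using the canonicity of the zero-section of $\pi$).

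With these data in place, consider the tautological class
$$\xi \in H^1(\tilde{S}, \cL_{\tilde{H}/\tilde{S}}) \otimes H^0(V^G, \cO_{V^G}) \subset H^1(\tilde{S} \times V^G, \cL_{\pr^*\tilde{H}/\tilde{S}\times V^G})$$
corresponding to the inclusion $V^G \hookrightarrow V$; it is $G$-invariant by construction. Applying Construction~\ref{constructelliptic} to $\exp(\xi) + \pr^* \eta \in H^1(\tilde{S} \times V^G, \cJ(\pr^*\tilde{H})^\W)$ and gluing the trivial pieces $W_i \times V^G$ along the translations prescribed by this cocycle produces a family $\Pi: \wti{\cX} \to \tilde{S} \times V^G$. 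Property i) is built into Construction~\ref{constructelliptic}, while ii) is immediate from the fact that the gluing cocycle is constant in the $V^G$-direction, so that each piece is $W_i \times V^G$ in a trivial way.

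For iii), we exploit that every ingredient (the cover $\{U_i\}$, the zero-sections of $\pi|_{U_i}$ transported equivariantly across $G$-orbits, and the cocycle representative of $\exp(\xi) + \pr^*\eta$) has been chosen $G$-equivariantly. Consequently the diagonal $G$-action on $W_i \times V^G$ (via the lifted action on $W$ and trivially on $V^G$) commutes with the gluing isomorphisms and descends to the desired global $G$-action on $\wti{\cX}$, identifying with $(g_{|W_i} \times \Id)$ over matching patches. The final sentence about the quotient $\wti{\cX}/G \to S \times V^G$ is then formal: ii) and iii) together show that the quotient is locally modelled on $(W_i \times V^G)/\mathrm{Stab}_G(U_i)$, which trivializes over images in $S$. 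The main technical obstacle is step three, namely the production of a $G$-equivariant \v{C}ech representative of $\eta$; this rests on the canonicity of the Weierstra{\ss} model (which pins down the local translation identifications intrinsically) and the finiteness of $G$ allowing averaging.
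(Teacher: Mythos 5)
The central gap is in your third step: you assert that, by Lemma~\ref{lem:Kahler implies virtually weierstrass} and by passing to a locally Weierstra{\ss} bimeromorphic model, we may assume $\eta$ is represented by a $G$-invariant class in $H^1(\tilde{S}, \cJ(\tilde{H})^\W)$. This is not available. Lemma~\ref{lem:Kahler implies virtually weierstrass} only gives that some positive multiple $m\eta$ lies in the image of $H^1(\tilde S, \cJ(\tilde H)^\W) \to H^1(\tilde S, \cJ(\tilde H)_{mer})$: the obstruction is the image of $\eta$ in $H^1(\tilde S, \cQ_{\tilde H})$, which is only shown to be \emph{torsion}, not zero. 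This is exactly the caveat recorded in Remark~\ref{rem-locW} (even for a minimal locally Weierstra{\ss} fibration, in general only a multiple of $\eta(f)$ comes from $H^1(S,\cJ(H)^\W)$), and replacing $\eta$ by $m\eta$ is not an option, since it would change the fibration being deformed and destroy property i). Consequently you cannot feed $\exp(\xi)+\pr^*\eta$ into Construction~\ref{constructelliptic} as it stands: a \v{C}ech cocycle $(\eta_{ij})$ representing this class consists of \emph{meromorphic} sections, so the induced gluing maps on the pieces $W_{ij}\times V^G$ are a priori only bimeromorphic, and the gluing lemma does not apply.

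This holomorphicity issue is the actual technical heart of the proof, which you misidentify (you locate the main obstacle in producing a $G$-equivariant cocycle representative, which is comparatively harmless since the class is $G$-invariant and $G$ is finite). The paper proceeds as follows: it first observes that $\eta_0=\exp(\xi)+\pr^*\eta$ has torsion image in $H^1(\tilde S\times V^G,\cQ_{p^*\tilde H})$, hence $k\eta_0\in H^1(\tilde S\times V^G,\cJ(p^*\tilde H)^\W)$ for some $k>0$, and builds the associated locally trivial family $\Pi'$ via Proposition~\ref{prop:deformation loc trivial}; it then shows that the meromorphic gluing maps $\eta_{ij}$ for $\eta_0$ itself are in fact biholomorphic, because $(\bk_{ij}\times\Id)=(\bk_{ji}\times\Id)\circ\eta_{ij}$ with $\bk$ the finite multiplication-by-$k$ map of Lemma~\ref{lem-multm}, and a map into a normal space whose composition with a finite holomorphic map is holomorphic is itself holomorphic. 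Only after this step can one glue the pieces $W_i\times V^G$ to obtain $\wti{\cX}$. The remainder of your argument (the $G$-invariant refinement of the cover, the $G$-invariance of $\eta$ coming from the Galois pullback, the tautological class $\xi$, and the descent of the diagonal $G$-action through the equivariant cocycle) matches the paper and is fine, but without the holomorphicity step the construction of $\wti{\cX}$ does not go through.
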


\begin{proof}
The inclusion $V^G \hto H^1(\tilde{S},\cL_{\tilde{H}/\tilde{S}})$
gives rise to an element 
$$\xi \in H^1(\tilde{S},\cL_{\tilde{H}/\tilde{S}}) \otimes H^0(V^G,\cO_{V^G}) \subset H^1(\tilde{S} \times V^G,\cL_{p^*\tilde{H}/\tilde{S} \times V^G})$$
where $p : \tilde{S} \times V^G \to \tilde{S}$ denotes the projection onto the first factor. Since $\exp{\xi} $ is sent to $0$ in  $H^1(\tilde{S} \times V^G,\cQ_{p^*\tilde{H}})$ and since the image of $\eta$ in $H^1(\tilde{S} ,\cQ_{\tilde{H}})$ is torsion by Lemma~\ref{lem:Kahler implies virtually weierstrass} because $\tilde{X}$ is K\"ahler, the image of the element 
$$\eta_0 \colonec \exp{\xi} + p^*\eta \in H^1(\tilde{S} \times V^G,\cJ(p^*\tilde{H})_{mer})$$ 
in $H^1(\tilde{S} \times V^G,\cQ_{p^*\tilde{H}})$ is also torsion. So there exists $k \in \bZ_{>0}$ such that 
$$k\eta_0 \in H^1(\tilde{S} \times V^G,\cJ(p^*\tilde{H})^{\W}).$$

Already by~\cite[Theorem 6.3.12]{Nak02c}, we know that $\eta_0$ corresponds to an elliptic fibration $\Pi : \wti{\cX} \to V^G \times \tilde{S}$ and by construction, the elliptic fibration $\wti{\cX}_\phi \to \tilde{S}$ parameterized by $\phi \in V^G$ corresponds to $\exp{\phi} + \eta \in H^1(\tilde{S},\cJ(\tilde{H})_{mer})$. To show the existence of such a family which satisfies other properties listed in Lemma~\ref{lem-loctriv}, let us recall how in our case $\Pi$ is constructed following the proof of~\cite[Theorem 6.3.12]{Nak02c}.

Let $\Pi' : \wti{\cX}' \to \tilde{S} \times V^G$ be the minimal locally Weierstrass model which corresponds to $k\eta_0$. This is a subfamily of the one constructed in Proposition~\ref{prop:deformation loc trivial} for $k \eta \in H^1(\tilde{S},\cJ(\tilde{H})^{\W})$. By construction, there exists a $G$-invariant open cover $\{U_i\}$ of $\tilde{S}$ such that $\Pi'^{-1}(U_i \times V^G) \simeq \pi^{-1}(U_i) \times V^G$ over $V^G$ where $\pi : \tilde \W \to \tilde{S}$ is the minimal  Weierstrass model associated to $\tilde{H}$ together with a $\pi$-equivariant $G$-action. Furthermore, $\Pi'$ satisfies Property $iii)$, namely for each $g \in G$ and each pair of indices $i$ and $j$ such that $g(U_i) = U_j$, the restriction $g : \Pi'^{-1}( U_i \times V^G) \xrightarrow{\sim} \Pi'^{-1}(U_j \times V^G)$ of the $G$-action on $\wti{\cX}'$ to $\Pi'^{-1}(U_i \times V^G)$ is isomorphic to $(g_{|\tilde \W_i} \times \Id) :  \tilde \W_i \times V^G \xrightarrow{\sim}  \tilde \W_j \times V^G$ where $\tilde \W_i \colonec \pi^{-1}(U_i)$. 

Let $(\eta_{ij})$ be a 1-cocycle representing $\eta_0$ with respect to the open cover $\{U_i \times V^G\}$, which induces bimeromorphic maps $\eta_{ij} : \tilde \W_{ij} \times V^G \dto \tilde \W_{ji} \times V^G$ where $\tilde \W_{ij} \colonec \tilde \W_i \cap \tilde \W_j$.  As  $\eta_0$ lies in the $G$-invariant part of $H^1(\tilde{S} \times V^G,\cJ(p^*\tilde{H})_{mer})$, we can assume that $(\eta_{ij})$ is $G$-invariant. Let $\bk : \tilde \W \to \tilde \W$ be the multiplication-by-$k$ map, which is finite by Lemma~\ref{lem-multm}. Let $\bk_{ij} : \tilde \W_{ij} \to \tilde \W_{ij}$ be the restriction of $\bk$ to $\tilde \W_{ij}$ and let $(\bk_{ij} \times \Id) : \tilde \W_{ij} \times V^G \to \tilde \W_{ij} \times V^G \subset \wti{\cX}'_i$ be its product with $\Id : V^G \to V^G$. As $\tilde \W_{ij} \times V^G$ is normal, $(\bk_{ij} \times \Id) = (\bk_{ji} \times \Id)\circ \eta_{ij}$, and $\bk_{ij} \times \Id$ is finite, the map $\eta_{ij}$ is holomorphic. Using the cocycle of biholomorphic maps $(\eta_{ij})$, we can glue the  $ \tilde \W_i\times V^G$'s together to form an elliptic fibration $\Pi : \wti{\cX} \to \tilde{S} \times V^G$ which represents $\eta$. By construction $\Pi^{-1}(U_i \times V^G) \simeq \tilde \W_i \times V^G$ over $V^G$, which shows $ii)$. The $G$-invariance of $(\eta_{ij})$ gives rise to a $\Pi$-equivariant $G$-action on $\wti{\cX}$. Since $\Pi'$  satisfies Property $iii)$, it follows that $\Pi$ also satisfies $iii)$.
\end{proof}

There is a dense subset $\gS_0$ of $V^G$ parameterizing fibers of $\wti{\cX} \to V^G$ which are algebraic. Indeed, by the long exact sequence coming from~\eqref{exseq-JW} and Corollary~\ref{cor:densite points rationnels G}, the pre-images of torsion points of $H^1(\tilde{S},\cJ(\tilde{H})^\W)$ under the restriction of the exponential map $H^1(\tilde{S},\cL_{\tilde{H}/\tilde{S}})^G \to H^1(\tilde{S},\cJ(\tilde{H})^\W)$ form a dense subset in $H^1(\tilde{S},\cL_{\tilde{H}/\tilde{S}})^G$. As $\tilde{X}$ is K\"ahler, by Lemma~\ref{lem:Kahler implies virtually weierstrass} there exists $m \in \bZ_{>0}$ such that $m\eta \in H^1(\tilde{S},\cJ(\tilde{H})^\W)$. By Theorem~\ref{th:c torsion general}, we can further assume that up to replacing $m$ by a larger multiple, $m\eta = \exp(\phi_0)$ for some $\phi_0 \in V^G$. It follows that 
$$\gS \colonec \left\{ \phi \in V^G \mid \exp(\phi_0 + m\phi) \text{ is torsion} \right\} = \frac{1}{m}(\gS_0 - \phi_0)$$ 
is a dense subset of $V^G$. Since $\phi \in \gS$ implies that $\eta + \exp(\phi) \in H^1(\tilde{S},\cJ(\tilde{H})_{mer})$ is torsion, fibers of $\wti{\cX} \to V^G$ parameterized by $\gS$ are algebraic by~\cite[Proposition 5.5.4]{Nak02c}.

By Lemma~\ref{lem-loctriv}, the quotient $\wti{\cX}/G \to S \times V^G$ is a locally trivial family of elliptic fibrations over $S$ parameterized by $V^G$.
 Let $\cX$ be the functorial desingularization of $\wti{\cX}/G$. The family $\cX \to V^G$ is an algebraic approximation of the central fiber, which is smooth by~Lemma \ref{lem:resolution_family}, and is  bimeromorphic to $X$.
\end{proof}

\begin{remark}\label{rem-locW}
We notice that Lemma~\ref{lem-loctriv} (or more directly the same construction method of $\Pi$) implies in particular that if $f : X \to B$ is an elliptic fibration satisfying Assumption~\ref{assume} and whose total space $X$ is compact K\"ahler, then $f$ has a (unique) minimal locally Weierstra{\ss} model. Note also that despite the fact that $f$ is bimeromorphic to a locally Weierstra{\ss} fibration, in general only a multiple of $\eta(f)$ is in the image of $H^1(S,\cJ(H)^\W)$. 
\end{remark}

\begin{proof}[Proof of Corollary~\ref{theoremmain}]
By Theorem~\ref{theoremapproximation}, $X$ is bimeromorphic to a compact K\"ahler manifold $X'$ which has an algebraic approximation. In particular, $\pi_1(X')$ is projective. Since the fundamental group of a compact K\"ahler manifold is invariant under bimeromorphic transformations, we have $\pi_1(X) = \pi_1(X')$.
\end{proof}

\subsection{Proof of Theorem \ref{theorempione}}

Let $X$ be a compact K\"ahler manifold, and consider $\holom{g}{X}{Y}$ a fibration onto
a compact K\"ahler manifold $Y$. If $F$ is a general fibre, denote by $\pi_1(F)_X$ the image
of the morphism $\pi_1(F) \rightarrow \pi_1(X)$. Up to blowing up $X$ and $Y$ we can
suppose that the fibration $g$ is neat in the sense of \cite[Definition 1.2]{Cam04}.
By \cite[Corollary 11.9]{Cam11} we then have an exact sequence
\begin{equation} \label{sequencefibrations}
1 \rightarrow \pi_1(F)_X \rightarrow \pi_1(X) \rightarrow \pi_1(Y, \Delta) \rightarrow 1.
\end{equation}
where $\Delta$ is the orbifold divisor defined in \eqref{orbifoldbase}.

Let us also recall that by \cite{Cam94,Kol93} every compact K\"ahler manifold $X$ admits a (unique up to bimeromorphic equivalence of fibrations) almost 
holomorphic fibration 
$$
\holom{g}{X}{\Gamma(X)}
$$
with the following property: let $Z$ be a subspace with normalisation $Z' \rightarrow Z$ passing through a very general point $x\in X$. Then $Z$ is contained in the fibre through $x$ if and only if the natural map $\pi_1(Z') \rightarrow \pi_1(X)$ has finite image. This fibration is called the $\Gamma$-reduction of $X$ (Shafarevich map in the terminology of \cite{Kol93}).
Up to replacing $\gamma$ by some neat holomorphic model we thus obtain a fibration such that
$\pi_1(F)_X$ is finite and such that the dimension of the base is minimal among all fibrations
with this property. We call $\gamma \dim(X) := \dim (\Gamma(X))$ the $\gamma$-dimension of $X$.

In geometric situations it is often necessary to replace $X$ by some \'etale cover. It is easily seen that the situation can be made equivariant under the Galois group of the cover.

\begin{lemma}\label{lem:equivariance_shafarevich_map}
Let $X$ be a compact K\"ahler manifold acted upon by a finite group $G$. Then there exists a proper modification $\mu:\tilde{X}\to X$ and a holomorphic map $g:\tilde{X}\to Y$ such that:
\begin{enumerate}[$(i)$]
\item $\tilde{X}$ and $Y$ are compact K\"ahler manifolds.
\item $G$ acts on $\tilde{X}$ and $Y$.
\item $\mu$ and $g$ are $G$-equivariant for these actions.
\item $g$ is a neat model \cite[Definition 1.2]{Cam04} of the $\Gamma$-reduction of $X$.
\end{enumerate}
\end{lemma}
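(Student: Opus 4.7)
The plan is to exploit the canonicity of the $\Gamma$-reduction in order to transfer the $G$-action to the base as a birational action, then regularize it through a graph-in-a-product construction, and finally apply $G$-equivariant resolution of singularities to produce a neat holomorphic model.

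First, I would fix any meromorphic representative $\gamma_{0}:X\dashrightarrow Z_{0}$ of the $\Gamma$-reduction of $X$ with $Z_{0}$ compact K\"ahler. For every $\sigma\in G$, the composition $\gamma_{0}\circ\sigma$ is again a $\Gamma$-reduction of $X$ (the Shafarevich-type property defining it is invariant under the biholomorphism $\sigma$), so by uniqueness up to bimeromorphic equivalence of fibrations there exists a unique bimeromorphic self-map $\phi_{\sigma}:Z_{0}\dashrightarrow Z_{0}$ with $\gamma_{0}\circ\sigma=\phi_{\sigma}\circ\gamma_{0}$. The relation $\phi_{\sigma\tau}=\phi_{\sigma}\circ\phi_{\tau}$ is then forced, so $\sigma\mapsto\phi_{\sigma}$ is a homomorphism from $G$ into the group of bimeromorphic self-maps of $Z_{0}$.

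The key step, and the main obstacle, is the regularization of this birational action. For this I would introduce the meromorphic map
\[\psi:X\dashrightarrow\prod_{\sigma\in G}Z_{0},\qquad\psi(x)=\bigl(\phi_{\sigma}(\gamma_{0}(x))\bigr)_{\sigma\in G},\]
and equip the target with the biregular twisted-permutation action $\tau\cdot(y_{\sigma})_{\sigma}=(y_{\sigma\tau})_{\sigma}$. The cocycle identity $\phi_{\sigma\tau}=\phi_{\sigma}\circ\phi_{\tau}$ makes $\psi$ into a $G$-equivariant meromorphic map, and the closure $Z\subset\prod_{\sigma}Z_{0}$ of its image is therefore $G$-invariant, compact K\"ahler (as a closed subvariety of a product of compact K\"ahler manifolds), and identified bimeromorphically with $Z_{0}$ through the projection onto the factor $\sigma=e$. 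A functorial $G$-equivariant resolution of singularities of $Z$, available in the analytic category (see e.g.~\cite[Chapter 3]{Kol07} and~\cite{Wlo09}), then yields a smooth compact K\"ahler manifold $Y_{0}$ endowed with a biregular $G$-action and a $G$-equivariant bimeromorphic morphism $Y_{0}\to Z$. Lifting $\psi$ through this resolution produces a $G$-equivariant meromorphic fibration $\tilde\gamma:X\dashrightarrow Y_{0}$ that still represents the $\Gamma$-reduction.

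From this point on, everything is routine. Applying $G$-equivariant principalization to the ideal sheaf of the indeterminacy locus of $\tilde\gamma$ (which is $G$-invariant because $\tilde\gamma$ is equivariant) provides a proper modification $\mu_{1}:X_{1}\to X$, obtained from $X$ by successive blow-ups along smooth $G$-invariant centers, together with a $G$-equivariant holomorphic fibration $g_{1}:X_{1}\to Y_{0}$; such blow-ups preserve the K\"ahler property, so $X_{1}$ is compact K\"ahler. A final $G$-equivariant log-resolution of the union of the $\mu_{1}$-exceptional divisor and the discriminant locus of $g_{1}$, together possibly with further equivariant blow-ups of $Y_{0}$ along smooth invariant centers, then realizes the normal-crossings conditions appearing in Campana's notion of a neat model~\cite[Definition 1.2]{Cam04}, producing the required $\mu:\tilde X\to X$ and $g:\tilde X\to Y$.
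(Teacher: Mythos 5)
Your proof is correct, but it takes a genuinely different route from the one in the paper. The paper gets the biregular $G$-action on a base model essentially for free: it takes $S$ to be the (normalisation of the) irreducible component of the cycle space $\mathcal{C}(X)$ parametrising the fibres of the $\Gamma$-reduction; uniqueness of the $\Gamma$-reduction shows that $G$ permutes these fibres, hence acts on that component, and the tautological meromorphic map $X\dashrightarrow S$ is automatically $G$-equivariant, after which one concludes — exactly as you do — by $G$-equivariant resolution of $S$ and $G$-equivariant blow-ups of $X$. You instead start from an arbitrary model $\gamma_0:X\dashrightarrow Z_0$, use uniqueness to produce a homomorphism from $G$ to the group of bimeromorphic self-maps of $Z_0$, and regularise this birational action via the closure of the image of the multi-graph map into $\prod_{\sigma\in G}Z_0$ with the twisted permutation action; this is a sound and more elementary substitute for the cycle-space argument, since it avoids Barlet space theory entirely at the cost of an explicit regularisation step. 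Two points deserve slightly more care in your write-up, though neither is a real gap (and the paper is equally terse about both): the elimination of indeterminacy of $\tilde\gamma$ is more safely phrased as a $G$-equivariant resolution of the graph closure $\Gamma_{\tilde\gamma}\subset X\times Y_0$, dominated by equivariant blow-ups of $X$ along smooth invariant centers, since principalising the ideal of the indeterminacy locus alone does not in general resolve a meromorphic map; and the K\"ahlerness of $Y_0$ rests on the fact that blow-ups of compact K\"ahler spaces along coherent ideals remain K\"ahler, which is worth stating explicitly since $Z$ is only a K\"ahler \emph{space}, not a manifold.
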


\begin{proof}
Let us consider $S$ the (normalisation of the) irreducible component of the cycle space $\mathcal{C}(X)$ which parametrizes the fibres of the $\Gamma$-reduction. 
By uniqueness of the latter, the group $G$ acts on $S$ and the natural meromorphic map $X\dashrightarrow S$ is $G$-equivariant. Now it is enough to perform $G$-equivariant resolution of singularities for $S$ and $G$-equivariant blow-ups on $X$ in order to make the latter map holomorphic, neat and $G$-equivariant.
\end{proof}

The following remark allows to control the extensions appearing in these covers.

\begin{remark}\label{rem:group_extension}
Let $1\to K\to H \to G \to 1$ be an exact sequence of groups. It is well known that this extension determines a morphism $\varphi_H:G \to \mathrm{Out}(K)$ to the group of outer automorphisms of $K$ (induced by the conjugation in $H$). To recover the extension, it is needed to prescribe an additional information: the class $c_H\in H^2(G, Z(K))$ (see \cite[Chapter IV, \S 6]{Br} for details).

On the reverse direction, when $G$ is a finite group acting by homeomorphisms on a topological space $Z$, we also have an induced morphism $\varphi_Z:G\to \mathrm{Out}(\pi_1(Z))$. In this case, there is an extension
$$
1\To \pi_1(Z)\To H\To G\To 1
$$
induced by the action of $G$ on $Z$. It can be explicitly constructed in the following way. 
By \cite{Ser58} there exists a projective simply connected manifold  
$P$ on which $G$ acts freely and we can look at the natural projection $Z\times P\to (Z\times P)/G$. 
This is a finite \'etale cover of Galois group $G$ and the homotopy exact sequence is:
\begin{equation}\label{eq:group action and extension}
1\To \pi_1(Z)\simeq\pi_1(Z\times P)\To \pi_1((Z\times P)/G)\To G \To 1.
\end{equation}
This is the sought exact sequence. It is stated in \cite[Lemma 3.9]{Cla16} that the extension (\ref{eq:group action and extension}) does not depend on $P$ and that this extension is the usual one if $G$ acts freely on $Z$ (\emph{i.e.} the one corresponding to $Z\to Z/G$).
\end{remark}

Using the construction above we obtain the following technical result.

\begin{lemma}\label{lem:basic-lemma}
Let $H:=\pi_1(X)$ be a K\"ahler group and $K \triangleleft H$ be finite index normal subgroup 
with quotient $G:=H/K$. Let us denote by $\tilde{X}$ the corresponding \'etale cover of $X$. Assume now that there exists a continuous map $g:\tilde{X}\to Z$ to a projective manifold $Z$ which is $G$-equivariant (so that $G$ acts on $Z$) and which induces an isomorphism at the level of fundamental groups. Then $H$ is a projective group.
\end{lemma}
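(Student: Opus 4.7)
The idea is to realise the extension $1\to K\to H\to G\to 1$ as the fundamental group of a projective manifold obtained from $Z$ by a Serre-type construction.

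First, following Serre \cite{Ser58} (as recalled in Remark~\ref{rem:group_extension}), I would pick a simply connected smooth projective variety $P$ equipped with a free $G$-action. Since $G$ acts freely on the \'etale cover $\tilde X$, the diagonal $G$-action on $\tilde X \times P$ is also free, so $\tilde M := (\tilde X \times P)/G$ is a compact K\"ahler manifold. Analogously, $M := (Z \times P)/G$ is a smooth projective variety: $Z \times P$ is projective, the $G$-action is free (by freeness on $P$), and the quotient of a projective variety by a free finite group action is again projective. The $G$-equivariant morphism $g \times \id_P : \tilde X \times P \to Z \times P$ descends to a continuous map $\tilde g : \tilde M \to M$. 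Our target is precisely $M$.

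Next, I would compute $\pi_1(\tilde M)$. The $G$-equivariant first projection $\tilde X \times P \to \tilde X$ descends to a fibre bundle $\tilde M \to X$ with simply connected fibre $P$, so the homotopy exact sequence yields $\pi_1(\tilde M) \simeq \pi_1(X) = H$. Moreover, applying Remark~\ref{rem:group_extension} to the $G$-action on $\tilde X$, the homotopy sequence of the \'etale $G$-cover $\tilde X \times P \to \tilde M$ reads
\[
1 \longrightarrow \pi_1(\tilde X) \simeq K \longrightarrow \pi_1(\tilde M) \longrightarrow G \longrightarrow 1,
\]
and this extension is canonically identified with $1\to K\to H\to G\to 1$ (since the $G$-action on $\tilde X$ is free).

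Finally, I would compare $\tilde M$ and $M$ via $\tilde g$. The same reasoning on the $Z$-side gives an extension $1\to \pi_1(Z) \to \pi_1(M) \to G \to 1$. The $G$-equivariance of $g$ and the vanishing $\pi_1(P)=1$ guarantee that $\tilde g$ induces a morphism between these two short exact sequences which is the identity on $G$ and, by hypothesis, an isomorphism $K\simeq\pi_1(\tilde X)\stackrel{g_*}{\xrightarrow{\sim}}\pi_1(Z)$ on the kernels. The five lemma then gives $\tilde g_* : H \simeq \pi_1(\tilde M) \xrightarrow{\sim} \pi_1(M)$, so $H$ is realised as the fundamental group of the projective manifold $M$.

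The main technical point to watch is the last step: one must check carefully that $\tilde g$ induces a \emph{morphism of short exact sequences}, i.e. that the identifications $\pi_1(\tilde M) \simeq H$ and $\pi_1(M) \simeq (\text{extension determined by } g)$ are compatible. This is essentially tautological from the equivariance of $g$ combined with the naturality statement in Remark~\ref{rem:group_extension}, but it is the step that encodes the whole content of the lemma.
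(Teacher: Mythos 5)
Your proposal is correct and follows essentially the same route as the paper: the paper's (much terser) proof likewise passes to the $G$-equivariant map $\tilde X\times P\to Z\times P$ with $P$ a simply connected projective manifold carrying a free $G$-action, and concludes via Remark~\ref{rem:group_extension} that $\pi_1(X)\simeq\pi_1((Z\times P)/G)$ is projective. Your expansion of the comparison of the two extensions via the five lemma is exactly the content the paper leaves implicit.
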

\begin{proof}
The $G$-equivariant map
$$\tilde{X}\times P\stackrel{g}{\To} Z\times P$$
induces an isomorphism on the fundamental group and, using Remark \ref{rem:group_extension}, we infer that
$$\pi_1(X)\simeq \pi_1((Z\times P)/G)$$
is a projective group.
\end{proof}

Let us recall that a group $G$ is virtually torsion-free if there exist a subgroup $H \subset G$
of finite index that is torsion free.

\begin{lemma}\label{lem:virtual torsion freeness}
Let $X$ be a compact K\"ahler manifold admitting a fibration onto a curve $f:X\to C$ such that 
the fundamental group $\pi_1(F)$ of a general fibre $F$  is abelian. Then the group $\pi_1(X)$ is  virtually torsion-free. 
\end{lemma}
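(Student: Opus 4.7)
The plan is to combine the orbifold fibration sequence \eqref{sequencefibrations} with the virtual torsion-freeness of one-dimensional orbifold groups, and then to handle the finite torsion that can survive in the abelian fiber group. Upon replacing $X$ by a blow-up we may assume that $f$ is neat, so that \eqref{sequencefibrations} gives
$$
1 \longrightarrow A \longrightarrow \pi_1(X) \longrightarrow \pi_1(C,\Delta) \longrightarrow 1,
$$
with $A := \pi_1(F)_X$ abelian (being a quotient of the abelian group $\pi_1(F)$). The orbifold fundamental group of a one-dimensional orbifold is virtually torsion-free (it is finite, virtually $\Z^2$, or virtually a closed surface group). Passing to the preimage $H$ in $\pi_1(X)$ of a torsion-free finite-index subgroup $\Gamma \le \pi_1(C,\Delta)$ yields a finite-index subgroup $H \le \pi_1(X)$ fitting in
$$
1 \longrightarrow A \longrightarrow H \longrightarrow \Gamma \longrightarrow 1,
$$
with $\Gamma$ torsion-free. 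Any torsion element of $H$ projects trivially in $\Gamma$, hence lies in the finite characteristic subgroup $T := \mathrm{Tors}(A) \subseteq A$.

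Since $T$ is characteristic in $A$ it is normal in $H$, and since $\mathrm{Aut}(T)$ is finite the kernel of the conjugation action $H \to \mathrm{Aut}(T)$ is a finite-index subgroup, still denoted $H$, in which $T \subseteq Z(H)$ is central. It now suffices to produce, for each non-trivial $t \in T$, a homomorphism from $H$ to a finite group whose kernel avoids $t$: the intersection of the finitely many kernels so obtained is a torsion-free finite-index subgroup of $H$, hence of $\pi_1(X)$.

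This last residual-finiteness-type step is the main obstacle. Here $H$ is a central extension
$$
1 \longrightarrow T \longrightarrow H \longrightarrow H/T \longrightarrow 1,
$$
where $H/T$ is torsion-free, being itself an extension of the torsion-free group $\Gamma$ by $A/T \simeq \Z^r$, and is classified by a class $\alpha \in H^2(H/T,T)$. It would suffice to find a finite-index subgroup $Q' \le H/T$ on which $\alpha$ restricts to $0$, since the preimage of $Q'$ in $H$ would then split as $T \times Q'$, giving $Q'$ as the desired torsion-free finite-index subgroup. For $\Gamma$ itself a surface (or free) group such a killing is easy because restriction on $H^2$ corresponds to multiplication by the covering degree; the general case we plan to reduce to this one via the Hochschild--Serre spectral sequence of $\Z^r \to H/T \to \Gamma$, using the linearity of the monodromy representation $\Gamma \to \mathrm{Aut}(A) \subseteq \GL_r(\Z) \times \mathrm{Aut}(T)$ to control the cross terms and produce, after passing to a sufficiently deep finite \'etale cover, a finite quotient of $H$ in which $T$ embeds.
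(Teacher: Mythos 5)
Your reduction is sound as far as it goes: passing to a neat model and using \eqref{sequencefibrations}, the virtual torsion-freeness of the one-dimensional orbifold group $\pi_1(C,\Delta)$, the observation that all torsion of $H$ lies in the finite characteristic subgroup $T=\mathrm{Tors}(A)$, and the centralisation step are all correct. But the argument stops exactly at what you yourself call ``the main obstacle'': you never show that the class $\alpha\in H^2(H/T,T)$ restricts to zero on some finite-index subgroup of $H/T$. This cannot be waved through, because the general statement ``a central extension of a torsion-free (even linear, residually finite) group by a finite group virtually splits'' is \emph{false} -- Deligne's central extensions of $\mathrm{Sp}_{2n}(\Z)$, reduced modulo a suitable subgroup of the centre, give central extensions of torsion-free arithmetic groups by finite cyclic groups in which every finite-index subgroup contains torsion. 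So one must genuinely exploit the structure of $H/T$ as a $\Z^r$-by-Fuchsian group. The Hochschild--Serre d\'evissage you sketch can most likely be pushed through, but each of its steps requires an argument you have not supplied: (i) killing the image of $\alpha$ in $H^2(\Z^r,T)^{\Gamma}$ needs a finite-index subgroup $Q'\le H/T$ with $Q'\cap\Z^r\subseteq m\Z^r$, which is itself a virtual-torsion-freeness assertion for the extension of $\Gamma$ by $(\Z/m)^r$; (ii) the $E^{1,1}$-contribution in $H^1(\Gamma,H^1(\Z^r,T))$ must be killed; (iii) the $E^{2,0}$-contribution in $H^2(\Gamma,\cdot)$ with finite coefficients must be killed by a cover of the curve of suitable degree; and the interaction of restriction with the filtration as you shrink $\Gamma$ and $\Z^r$ simultaneously has to be organised with care. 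As written this is a programme, not a proof.

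It is worth comparing with how the paper sidesteps exactly this difficulty by injecting geometry at the outset. After a finite \'etale cover (\cite[Appendix C]{Cam98}) one may assume $\pi_1(F)_X=K=\ker\bigl(f_*\colon\pi_1(X)\to\pi_1(C)\bigr)$, so the relevant extension is $1\to K\to\pi_1(X)\to\pi_1(C)\to 1$ over the honest surface group $\pi_1(C)$ (the case $C\simeq\PP^1$ being immediate since then $\pi_1(X)=K$ is abelian). Arapura's theorem \cite[Theorem 5.1]{Ara11} -- a genuinely K\"ahler input -- asserts that the extension class $e\in H^2(\pi_1(C),K)$ is torsion; by \cite[\S 2.1]{CCE2} the induced torsion class in $H^2(\pi_1(C),K/K_{tor})$ dies on a finite-index subgroup, so after restriction $e$ comes from $H^2(\pi_1(C),K_{tor})$, and classes with finite coefficients over a positive-genus curve group are killed on a further finite cover. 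This replaces your entire spectral-sequence analysis over the rank-$(r+2)$ group $H/T$ by two cohomological computations over the surface group itself. To complete your purely group-theoretic route you would have to carry out the d\'evissage in full; otherwise you need the geometric input that makes the free part of the extension class torsion.
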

\begin{proof}
Applying \cite[Appendix C]{Cam98} we can take a finite \'etale cover such that $\pi_1(F)_X$ coincides with $K:=\ker(f_*:\pi_1(X)\to \pi_1(C))$ and the latter is thus finitely generated.
If $C \simeq \PP^1$ we obtain that $\pi_1(X) \simeq \pi_1(F)_X$ is abelian, so virtually torsion-free.
We can thus suppose that $g(C) \geq 1$.
By \cite[Theorem 5.1]{Ara11} the cohomology class $e\in H^2(\pi_1(C),K)$ 
corresponding to the extension
$$
1\To K\To \pi_1(X)\To \pi_1(C)\To 1
$$
is torsion. Then so is the class $e'\in H^2(\pi_1(C),K/K_{tor})$ corresponding to the extension
$$1\To K/K_{tor}\To \pi_1(X)/K_{tor}\To \pi_1(C)\To 1.$$
Arguing as in \cite[\S 2.1]{CCE2} we can assume that the latter cohomology class $e'$ vanishes (up to replacing $\pi_1(C)$ with a finite index subgroup). Using the following piece of long exact sequence of cohomology of $\pi_1(C)$-modules
$$\cdots\To H^2(\pi_1(C),K_{tor})\To H^2(\pi_1(C),K)\To H^2(\pi_1(C),K/K_{tor})\To\cdots$$
we see the cohomology class $e$ comes from $H^2(\pi_1(C),K_{tor})$. It is then easily observed\footnote{If $A$ is any finite $\pi_1(C)$-module then there is a finite index subgroup $\pi_1(C')$ of $\pi_1(C)$ such that the whole of the cohomology group $H^2(\pi_1(C),A)$ vanishes when restricted to $\pi_1(C')$. This is a consequence of the fact that a curve of positive genus admits finite \'etale covers of any given degree.} that this class is annihilated when restricted to a finite index subgroup of $\pi_1(C)$. This means that the following exact sequence of groups
$$1\To K_{tor}\To \pi_1(X)\To \pi_1(X)/K_{tor}\To 1$$
splits when restricted to a finite index subgroup and it proves that $\pi_1(X)$ is virtually torsion-free since $\pi_1(X)/K_{tor}$ is.
\end{proof}

Now we give some criteria to decide whether a K\"ahler group is projective.
\begin{lemma}\label{lem:gdim=1}
Let $X$ be a compact K\"ahler manifold having $\gamma\dim(X)\le1$. Its fundamental group is then projective.
\end{lemma}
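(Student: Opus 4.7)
The proof splits according to $\gamma\dim(X)\in\{0,1\}$. If $\gamma\dim(X)=0$, then $\pi_1(X)$ is finite by the very definition of the $\Gamma$-reduction, and Serre's construction (recalled in Remark~\ref{rem:group_extension}) realizes any finite group as the fundamental group of a smooth projective manifold. So the interesting case is $\gamma\dim(X)=1$.

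In that case, applying Lemma~\ref{lem:equivariance_shafarevich_map} (with trivial group) I may take the $\Gamma$-reduction to be a neat holomorphic fibration $g:X\to C$ onto a smooth compact complex curve, hence a smooth projective curve. The orbifold exact sequence \eqref{sequencefibrations} then reads
\[
1\longrightarrow \pi_1(F)_X\longrightarrow \pi_1(X)\longrightarrow \pi_1(C,\Delta)\longrightarrow 1,
\]
with $\pi_1(F)_X$ finite by the defining property of the $\Gamma$-reduction. The plan is now to apply Lemma~\ref{lem:basic-lemma} after trivializing first the orbifold structure on $(C,\Delta)$ and then the resulting extension of fundamental groups, each by a finite \'etale cover.

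For the orbifold, the Riemann existence theorem produces a finite Galois cover $r:C_0\to C$ of smooth projective curves, with Galois group $\mathcal G$, whose ramification index at each point of $\mathrm{supp}(\Delta)$ equals the orbifold multiplicity, so that $\pi_1(C_0)\triangleleft \pi_1(C,\Delta)$ has index $|\mathcal G|$. Pulling back $g$ along $r$ and taking a $\mathcal G$-equivariant resolution of singularities yields a $\mathcal G$-equivariant fibration $\tilde g:\tilde X\to C_0$ on the associated finite \'etale Galois cover $\tilde X\to X$, fitting in an exact sequence
\[
1\longrightarrow F_0\longrightarrow \pi_1(\tilde X)\longrightarrow \pi_1(C_0)\longrightarrow 1
\]
with $F_0$ finite. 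Since $F_0$ is finite, both the outer action $\pi_1(C_0)\to\mathrm{Out}(F_0)$ and the extension class in $H^2(\pi_1(C_0),Z(F_0))$ become trivial after a further finite \'etale cover $\tilde X'\to \tilde X$ (using, as in the proof of Lemma~\ref{lem:virtual torsion freeness}, that finite $\pi_1(C_0)$-cohomology on a curve is annihilated by a suitable unramified cover). Consequently $\pi_1(\tilde X')\simeq F_0\times\pi_1(C_0')$ for some smooth projective curve $C_0'$, and this group is realized as $\pi_1(Z)$ for the projective manifold $Z:=C_0'\times Q$, where $Q$ is a projective manifold with $\pi_1(Q)\simeq F_0$ provided by Serre.

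Performing the successive covers above in a $\mathcal G$-, and then $\mathcal G'$-equivariant way (with $\mathcal G':=\pi_1(X)/\pi_1(\tilde X')$ finite) produces a $\mathcal G'$-equivariant continuous map $\tilde X'\to Z$ inducing an isomorphism of fundamental groups, and Lemma~\ref{lem:basic-lemma} then yields the projectivity of $\pi_1(X)$. The main difficulty is precisely this equivariant bookkeeping at each reduction step, ensuring that Serre's construction and the chosen finite covers are compatible with the deck transformation actions; the underlying geometry is fully controlled by the preceding lemmas and Remark~\ref{rem:group_extension}.
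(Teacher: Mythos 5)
Your overall plan is quite different from the paper's proof, which disposes of the case $\gamma\dim(X)=1$ in one stroke by quoting \cite[Th\'eor\`eme 1.2]{Cla10} (a repackaging of Siu's factorization theorem \cite{Siu87}): that result directly produces a finite \'etale \emph{Galois} cover $\tilde X\to X$ whose $\Gamma$-reduction $\tilde g:\tilde X\to C$ already induces an isomorphism $\pi_1(\tilde X)\simeq\pi_1(C)$, so the continuous equivariant map needed for Lemma~\ref{lem:basic-lemma} is the fibration $\tilde g$ itself. Your attempt to rederive this from the orbifold exact sequence \eqref{sequencefibrations} is a legitimate alternative route, but as written it has a genuine gap at the final step. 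After arranging $\pi_1(\tilde X')\simeq F_0\times\pi_1(C_0')$ you set $Z:=C_0'\times Q$ with $\pi_1(Q)\simeq F_0$ and assert a $\mathcal G'$-equivariant continuous map $\tilde X'\to Z$ inducing this isomorphism. The factor $\tilde X'\to C_0'$ exists (it is the fibration), but there is no reason for a continuous map $\tilde X'\to Q$ inducing the projection $F_0\times\pi_1(C_0')\to F_0$ to exist: $Q$ is a projective manifold with the right $\pi_1$ but it is not aspherical, and the obstructions to realizing a prescribed homomorphism of fundamental groups by a continuous map live in the groups $H^{k+1}(\tilde X';\pi_k(Q))$, which have no reason to vanish. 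An abstract isomorphism of fundamental groups does not suffice for Lemma~\ref{lem:basic-lemma}, whose proof uses the map precisely to identify the two extensions of $\mathcal G'$ by $\pi_1(\tilde X')\simeq\pi_1(Z)$. The natural repair is to take \emph{one more} cover rather than introduce $Q$: the direct factor $1\times\pi_1(C_0')$ has finite index in $\pi_1(\tilde X')$, so its normal core $N$ in $\pi_1(X)$ is a finite-index normal subgroup which is a surface group; on the corresponding Galois cover $X''$ the (equivariant, by Lemma~\ref{lem:equivariance_shafarevich_map}) $\Gamma$-reduction is a fibration onto a curve inducing an isomorphism on $\pi_1$ (its finite kernel and the torsion of the orbifold base are both trivial because $N$ is torsion-free, cf.\ Remark~\ref{remarkp1orbifold}). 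But at that point you have reproved the statement of \cite[Th\'eor\`eme 1.2]{Cla10} in this case.

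Two smaller points in the intermediate steps. First, the Galois cover $r:C_0\to C$ ramified exactly to the orbifold orders does not exist when $(C,\Delta)$ is a bad orbifold ($\PP^1$ with one orbifold point, or with two orbifold points of distinct multiplicities); you must dispose of these cases separately, which is easy since there $\pi_1(C,\Delta)$ and hence $\pi_1(X)$ is finite and \cite{Ser58} applies. Second, you should justify that the normalization of $X\times_C C_0$ is \'etale over $X$: the ramification indices of $r$ divide the multiplicities of all components of the corresponding fibres by the definition of the classical multiplicity as a gcd, so the cover is \'etale in codimension one over the smooth $X$, and purity of the branch locus concludes. These are standard but should be said; the essential gap is the one described above.
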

\begin{proof}
If $\gamma\dim(X)=0$ its fundamental group is finite so \cite{Ser58} applies. If $\gamma\dim(X)=1$ we know from \cite[Th\'eor\`eme 1.2]{Cla10} (which is just a rephrase of \cite{Siu87}) that there exists a finite \'etale Galois cover $\pi:\tilde{X}\to X$ with
group $G$ such that the $\Gamma$-reduction of $\tilde X$ is a fibration $\tilde g: \tilde{X} \to C$
onto a curve inducing an isomorphism $\pi_1(X) \simeq \pi_1(C)$. 
It is also $G$-equivariant according to Lemma \ref{lem:equivariance_shafarevich_map}. We conclude by Lemma \ref{lem:basic-lemma}.
\end{proof}

\begin{lemma}\label{lem:gdim=2}
Let $X$ be a compact K\"ahler manifold such that $\gamma\dim(X) = 2$. If $\pi_1(X)$ is virtually torsion-free it is a projective group.
\end{lemma}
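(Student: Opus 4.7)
The plan is to exploit virtual torsion-freeness to pass to a finite étale cover where the generic fiber of the $\Gamma$-reduction contributes trivially to the fundamental group, thus reducing the projectivity question to a compact K\"ahler surface, and then to invoke an equivariant algebraic approximation of that surface.

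First, I would choose a torsion-free normal finite-index subgroup $\Gamma \triangleleft \pi_1(X)$ (provided by the hypothesis) and let $\pi : \tilde X \to X$ be the associated Galois étale cover, with group $G = \pi_1(X)/\Gamma$. Applying Lemma~\ref{lem:equivariance_shafarevich_map} to $(\tilde X, G)$ produces a $G$-equivariant neat holomorphic model $\tilde g : \tilde X \to \tilde Y$ of the $\Gamma$-reduction of $\tilde X$, with $\tilde Y$ a compact K\"ahler surface carrying a compatible $G$-action.

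Next, for a general fiber $\tilde F$ of $\tilde g$, the defining property of the $\Gamma$-reduction tells us that the image of $\pi_1(\tilde F)$ in $\pi_1(\tilde X) = \Gamma$ is finite; since $\Gamma$ is torsion-free this image must be trivial, and the exact sequence \eqref{sequencefibrations} gives $\pi_1(\tilde X) \simeq \pi_1(\tilde Y, \tilde\Delta)$, where $\tilde\Delta$ denotes the orbifold divisor of $\tilde g$. The torsion-freeness of the left hand side together with Remark~\ref{remarkp1orbifold} then upgrade this to $\pi_1(\tilde X) \simeq \pi_1(\tilde Y)$. At this point it would suffice to construct a $G$-equivariant algebraic approximation of $\tilde Y$: such an approximation would produce a projective surface $Z$ with a compatible $G$-action and a $G$-equivariant diffeomorphism $\tilde Y \to Z$ inducing an isomorphism on $\pi_1$. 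Composing with $\tilde g$ would yield a $G$-equivariant continuous map $\tilde X \to Z$ satisfying the hypotheses of Lemma~\ref{lem:basic-lemma}, and the projectivity of $\pi_1(X)$ would follow.

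The main obstacle is thus producing the $G$-equivariant algebraic approximation of the compact K\"ahler surface $\tilde Y$. Non-equivariantly this is classical, since Kodaira's classification ensures that every compact K\"ahler surface admits an algebraic approximation; the delicate point here is that the approximating family must remain inside the $G$-invariant locus of the relevant moduli space, so that the deformation can be realised equivariantly under the finite group $G$. Such equivariant deformation-theoretic arguments for K\"ahler surfaces are carried out in \cite{CCE1}, from which the desired conclusion can be drawn.
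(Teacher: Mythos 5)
Your reduction step coincides with the paper's: pass to the Galois cover $\tilde X\to X$ associated with a torsion-free normal finite-index subgroup, take a $G$-equivariant neat model $\tilde g:\tilde X\to Y$ of the $\Gamma$-reduction via Lemma~\ref{lem:equivariance_shafarevich_map}, use torsion-freeness to kill $\pi_1(\tilde F)_{\tilde X}$, and conclude $\pi_1(\tilde X)\simeq\pi_1(Y,\Delta)\simeq\pi_1(Y)$ from \eqref{sequencefibrations} and Remark~\ref{remarkp1orbifold}. Up to that point the argument is fine.

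The gap is in the last step. You correctly identify that everything now hinges on a \emph{$G$-equivariant} algebraic approximation of the compact K\"ahler surface $Y$, but you then assert this as a known fact ``carried out in \cite{CCE1}''. That reference does not supply such a general statement, and the assertion is precisely the nontrivial content that remains to be proved: for a non-projective surface with $a(Y)=0$ (a torus or a K3 up to modification) one must show that the $G$-invariant locus of the deformation space still contains a dense set of projective members, which requires an actual period-domain or Buchdahl-type argument in the invariant part. The paper deliberately avoids claiming such a theorem and instead splits into cases on $a(Y)$: when $a(Y)=0$ it does not deform $Y$ at all but observes that $\pi_1(Y)$ is abelian, so $\pi_1(X)$ is virtually abelian and the purely group-theoretic result of Baues--Riesterer \cite[Theorem 1.4]{BR11} gives projectivity; when $a(Y)=1$ it uses Kodaira's explicit (and $G$-equivariant, by uniqueness of the algebraic reduction) deformation of elliptic surfaces to algebraic ones \cite[Theorems 14.1-3-5]{Kod}; when $a(Y)=2$ the surface is already projective. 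So your scheme would work if you proved the equivariant approximation in each of these cases, but as written the crucial case $a(Y)=0$ is left unjustified, and the cleanest repair is exactly the paper's detour through virtual abelianity rather than through deformation theory.
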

\begin{proof}
Let $K \triangleleft \pi_1(X)$ be a finite index subgroup  which is normal and torsion-free,
and set $G := \pi_1(X)/K$. Applying Lemma \ref{lem:equivariance_shafarevich_map} to the finite \'etale cover corresponding to $G$, we know that we can find $\tilde{X}\to X$ which is a composition of a finite \'etale cover and a modification such that the $\gamma$-reduction $g:\tilde{X}\to Y$ is neat, $Y$ is smooth K\"ahler surface and $g$ is equivariant for the natural actions of $G$ on $\tilde{X}$ and $Y$.
Consider now the exact sequence \eqref{sequencefibrations}: the group
$\pi_1(\tilde{X})$ is torsion-free and $\pi_1(F)_X$ is finite, so we have $\pi_1(F)_X=1$.
Thus $\pi_1(\tilde{X})\simeq \pi_1(Y,\Delta^*(g))$ is torsion-free, by Remark \ref{remarkp1orbifold}
this implies that 
$$
\pi_1(\tilde{X}) \simeq \pi_1(Y,\Delta^*(g)) \simeq \pi_1(Y).
$$
We can now argue according to the algebraic dimension of $Y$.
\begin{enumerate}
\item If $a(Y)=0$ then by the classification of surfaces $\pi_1(Y)$ is abelian.
Thus $\pi_1(X)$ is virtually abelian and \cite[Theorem 1.4]{BR11} applies.
\item If $a(Y)=1$ then the algebraic reduction $Y \rightarrow C$ is an elliptic fibration over a curve $C$.
Since the algebraic reduction is unique, it is $G$-equivariant.
By \cite[Theorems 14.1-3-5]{Kod} we know that there exists 
a $G$-equivariant deformation of $Y$ to an algebraic elliptic surface, so we can again conclude by Lemma \ref{lem:basic-lemma}.
\item If $a(Y)=2$ the surface $Y$ is projective and Lemma \ref{lem:basic-lemma} applies.
\end{enumerate}
\end{proof}

\begin{remark*}
Although Lemma \ref{lem:gdim=1} and \ref{lem:gdim=2} are stated in a very similar manner, they are of different nature: the former is a group theoretic statement  whereas the latter is not. Indeed as a consequence of \cite{Siu87} it is known that the property $\gamma d(X)=1$ is equivalent to having a fundamental group commensurable with the fundamental group of a curve; this property does thus depend only on the fundamental group. In general it is however possible to realize a given K\"ahler group as the fundamental group of several manifolds having different $\gamma$-dimensions.
\end{remark*}

\begin{proof}[Proof of Theorem \ref{theorempione}]
We argue according to the algebraic dimension of $X$, the case $a(X)=3$ being trivial
since a K\"ahler Moishezon manifold is projective.
\begin{enumerate}
\item If $a(X)=0$ then $X$ is special in the sense of Campana. 
Thus the fundamental group is virtually abelian \cite[Theorem 1.1]{CC14} 
and thus projective \cite[Theorem 1.4]{BR11}.
\item If $a(X)=1$, we replace $X$ by some blowup such that the algebraic reduction
is a holomorphic fibration $f: X \rightarrow C$ onto a curve. 
By \cite{CP00} the general fibre
of $F$ is bimeromorphic to a K3 surface, torus or ruled surface over an elliptic curve,
so its fundamental group is abelian. By Lemma \ref{lem:virtual torsion freeness}
the group $\pi_1(X)$ is virtually torsion free. If $\gamma \dim(X)\le 2$
we can thus apply Lemma \ref{lem:gdim=1} and Lemma \ref{lem:gdim=2}.
If $\gamma \dim(X)=3$ it is shown in \cite[Theorem 1]{CZ} that up to bimeromorphic transformations and \'etale cover $f$ is a smooth morphism. Thus we can apply \cite[Corollary 1.2]{Cla16}. 
\item If $a(X)=2$ the algebraic reduction makes $X$ into an elliptic fibre space over a projective surface and we can apply Corollary~\ref{theoremmain}.
\end{enumerate}
\end{proof}

\begin{appendix}
\section{Elliptic surfaces}

\begin{proposition} \label{propositionbeauvillekaehler}
Let $S$ be a non-algebraic compact K\"ahler surface that admits
an elliptic fibration $\holom{f}{S}{\PP^1}$. Then $f$ has at least three singular fibres.
\end{proposition}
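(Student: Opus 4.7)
The strategy is to argue by contradiction: assume $f$ has at most two singular fibres and deduce that $S$ is projective, contradicting the hypothesis of non-algebraicity. Let $\gD \subset \PP^1$ denote the set of points over which $f$ is not smooth, so by assumption $\lvert\gD\rvert\le 2$. Put $U := \PP^1 \setminus \gD$ and $f^\star := f_{|f^{-1}(U)}$. The smooth elliptic fibration $f^\star$ carries a rank-two weight-one \textsc{vhs} $H$, equivalent to a period map $\tilde\tau : \widetilde U \to \mathbb H$ equivariant under the monodromy $\rho_H : \pi_1(U) \to \mathrm{SL}_2(\Z)$ (Subsection~\ref{subsectiondeformations}). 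My plan is to show that in every case $f^\star$ is isotrivial. This suffices: an isotrivial smooth elliptic fibration over $\PP^1$ becomes trivial after a finite cyclic base change ramified over $\gD$, so its total space is dominated by a projective surface $E \times \PP^1$, and projectivity descends to $S$.

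If $\lvert\gD\rvert \le 1$, then $U$ is either $\PP^1$ or $\bC$, hence simply connected. Since $\mathbb H$ is biholomorphic to a bounded domain, Liouville's theorem forces $\tilde\tau$ to be constant, so $f^\star$ is isotrivial. If $\lvert\gD\rvert = 2$, take $\gD = \{0, \infty\}$, so $U = \bC^\times$ and the monodromy is determined by a single matrix $M \in \mathrm{SL}_2(\Z)$. By Kodaira's classification of singular fibres, the local monodromy at a singular fibre is either of finite order or unipotent of infinite order. If $M$ has finite order $m$, the cyclic cover $z \mapsto z^m$ of $\PP^1$ kills $M$ and we fall back to the simply connected case.

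The remaining sub-case is when $M$ is unipotent of infinite order, so both singular fibres are of Kodaira type $I_n$ or $I_n^\star$ (with the same~$n$, since the two local monodromies are inverse to each other in $\mathrm{SL}_2(\Z)$). In this case the $j$-invariant $j : \PP^1 \to \PP^1$ is a rational function whose only poles are at $0$ and $\infty$, each of order $n$, giving $d := \deg j = 2n$. The Riemann--Hurwitz formula yields
\[
2d - 2 = \sum_{p \in \PP^1}(e_p - 1),
\]
and the contribution at $\{0,\infty\}$ is $d - 2$. At every preimage of the orbifold points $0, 1728 \in \overline{\mathbb H / \mathrm{SL}_2(\Z)}$ the local monodromy of $f^\star$ must be trivial (otherwise an additional singular fibre would appear, contradicting $\gD = \{0,\infty\}$), which forces the ramification index of $j$ to be divisible by $3$ over $0$ and by $2$ over $1728$. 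Letting $a$ (resp.\ $c$) denote the number of preimages of $0$ (resp.\ $1728$), we obtain $a \le d/3$ and $c \le d/2$, while Riemann--Hurwitz rearranges to $a + c \ge d$. Combining these gives $d \le 5d/6$, forcing $d = 0$; hence $j$ is constant and $f^\star$ is isotrivial.

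The main obstacle is this last sub-case, where one must translate the absence of extra singular fibres into ramification conditions on the $j$-map over the two orbifold points of the coarse moduli space of elliptic curves. Once this is done, the Riemann--Hurwitz inequality $a + c \ge d$ collides with the upper bound $a + c \le 5d/6$ coming from the orbifold structure, and isotriviality follows in all cases.
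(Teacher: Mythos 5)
Your reduction to isotriviality is correct, but it is also essentially the whole of the paper's argument in the non-isotrivial case, done more laboriously: the period map lives on the universal cover $\widetilde{U}$, which for $\lvert\gD\rvert\le 2$ is $\PP^1$ or $\C$, so Liouville applied to $\tilde\tau:\widetilde U\to\mathbb H\simeq\D$ gives constancy in one line. In particular your entire $\lvert\gD\rvert=2$ sub-case analysis is unnecessary, and the ``unipotent of infinite order'' sub-case is vacuous: since $\tilde\tau$ is constant, $M$ fixes a point of $\mathbb H$ and is therefore of finite order, so the Riemann--Hurwitz computation (which is internally correct) never needs to be run.

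The genuine gap is the step ``$f^\star$ isotrivial $\Rightarrow$ $S$ projective''. The finite base change kills the monodromy and, since $U'$ is a non-compact (Stein) Riemann surface, $H^1(U',\mathcal J(H))=0$, so the fibration is trivial \emph{over the open part} $U'$ only. Containing $E\times U'$ as a dense open subset does not make a compact surface Moishezon, and the trivialisation does not extend over the compactification: an elliptic Hopf surface is an isotrivial elliptic fibre bundle over $\PP^1$ with \emph{zero} singular fibres containing $\C\times E$ as a dense chart, yet it is not dominated by $E\times\PP^1$ and is not even K\"ahler. Its class in $H^1(\PP^1,\mathcal J(H))\hookrightarrow H^2(\PP^1,H)\simeq\Z^2$ is non-torsion, and this is exactly where the K\"ahler hypothesis must enter (cf.~Theorem \ref{theoremsmoothtorsor}(b)); your argument never invokes K\"ahlerness or non-algebraicity in the isotrivial case, so as written it would ``prove'' that Hopf surfaces are projective. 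Even after using K\"ahlerness to kill this obstruction one must still control the multiple fibres sitting over the two points of $\gD$. The paper avoids all of this by treating the isotrivial case numerically: for a relatively minimal model the canonical bundle formula gives $K_S\simeq f^*(K_{\PP^1}+\sum m_cS_c)$ with each isotrivial singular fibre contributing $m_c\le\frac12$, and pseudoeffectivity of $K_S$ (automatic for a non-algebraic K\"ahler surface, and equivalent to nefness here since $K_S$ is a pullback from the curve) forces $\sum m_c\ge 2$, hence at least four singular fibres. You would need to supply an argument of this kind, or an explicit torsor-theoretic one, to close the gap.
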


\begin{proof}
We can suppose without loss of generality that $f$ is relatively minimal. Thus we know by the canonical bundle formula \cite{Kod} that
$$
K_S \simeq f^* (K_{\PP^1} + M + \sum_{c \in \PP^1} m_c S_c)
$$
where $M$ is the modular part defined by the $j$-function 
and $\sum_{c \in \PP^1} m_c S_c$ the discriminant divisor. Recall that any non-algebraic K\"ahler surface has a pseudoeffective canonical bundle, so $K_S$ is nef.

Suppose first that $f$ is isotrivial, i.e. we have $M \equiv 0$. Then \cite[Chapter V, Table 6]{BHPV04}
shows that the singular fibres are either multiples of smooth elliptic curves or of type $I_0^*$.
For a multiple fibre we have $m_c \leq \frac{1}{2}$ and for a fibre of type $I_0^*$ we have
$m_c = \frac{1}{2}$.  Since $K_{\PP^1} \simeq \sO_{\PP^1}(-2)$ we see that there are at least $4$ singular fibres. 

Suppose now that $f$ is not isotrivial. Then we can use the argument from \cite[Proposition 1]{Bea81}: let $C^\star \subset \PP^1$ be the maximal open set over which $f$ is smooth.
The $j$-function defines a non-constant holomorphic map $\tilde C^\star \rightarrow \mathbb H$ from the universal cover $\tilde C^\star \rightarrow C^\star$ to the upper half plane $\mathbb H$. In particular $\tilde C^\star$ is not $\C$ or $\PP^1$, hence  $\PP^1 \setminus C^\star$
has at least three points. 
\end{proof}

\end{appendix}


\end{document}